\declaretheorem[name=Proposition,numberwithin=section]{proposition}
\theoremstyle{definition}
\newtheorem{definition}{Definition}[section]
\newtheorem{theorem}{Theorem}[section]
\newtheorem{corollary}{Corollary}[theorem]
\newtheorem{remark}[theorem]{Remark}
\newcommand{\proj}{P}
\newcommand{\vecspan}{\text{span}}
\newcommand{\ve}{\boldsymbol}
\newcommand{\defeq}{\vcentcolon =}
\newcommand{\range}{\text{range}}
\newcommand{\residual}{\tilde{\ve{r}}}
\newcommand{\gResidual}{\ve{r}}
\newcommand{\stateVec}{\ve{x}}
\newcommand{\iter}{k}
\newcommand{\mat}{\mathbf}
\newcommand{\Amat}{{\mat A}}
\newcommand{\bvec}{{\mat b}}
\newcommand{\nat}[1]{\mathbb{N}(#1)}
\newcommand{\RR}[1]{\mathbb{R}^{#1}}
\newcommand{\vecSpace}{\mathbb}
\newcommand{\velocity}{\ve{f}}
\newcommand{\identity}{{\mat I}}
\newcommand{\graphObj}{\mathcal}
\newcommand{\stateVecIter}{\stateVec^\iter}
\newcommand{\romStateVec}{\hat{\stateVec}}
\newcommand{\romStateVecIter}{\romStateVec^\iter}
\newcommand{\stateVecLast}{\stateVec^{\iter - 1}}
\newcommand{\residualIter}{\residual^\iter}
\newcommand{\gResidualIter}{\gResidual^\iter}
\newcommand{\param}{\boldsymbol{\mu}}
\newcommand{\paramSet}{\mathcal{D}}
\newcommand{\stateVecSize}{n}
\newcommand{\paramSetDim}{{n_{\param}}}
\newcommand{\interest}{\ve{z}}
\newcommand{\interestIter}{\ve{z}^\iter}
\newcommand{\interestFunc}{g}
\newcommand{\paramSetTrain}{\paramSet_{\text{train}}}
\newcommand{\ntimeTot}{\mathsf T}
\newcommand{\romBasis}{\mat{V}}
\newcommand{\romBasisStiefel}{\romBasis_*}
\newcommand{\romBasisCol}{\ve{v}}
\newcommand{\trialBasisOffset}{\bar{\ve{x}}}
\newcommand{\testBasis}{\mat{W}}
\newcommand{\romSize}{p}
\newcommand{\treeNodes}{\graphObj{V}}
\newcommand{\treeEdges}{\graphObj{E}}
\newcommand{\tree}{T}
\newcommand{\testTrialTrans}{\mat{A}^{\stateVecSize}(\stateVecIter; \param)}
\newcommand{\testTrialTransGen}{\mat{A}^{\stateVecSize}(\stateVec; \param)}
\newcommand{\initBasis}{\mat{\Phi}}
\newcommand{\initBasisCol}{\boldsymbol{\phi}}
\newcommand{\phiVec}{\boldsymbol{\phi}}
\newcommand{\ancestor}{\psi}
\newcommand{\children}{C}
\newcommand{\parent}{P}
\newcommand{\parentSpace}{\vecSpace{U}}
\newcommand{\childSpace}{\vecSpace{W}}
\newcommand{\leafSpace}{\vecSpace{L}}
\newcommand{\rootSpace}{\vecSpace{V}}
\newcommand{\frontier}{\graphObj{F}}
\newcommand{\frontierglobal}{\bar\frontier}
\newcommand{\decomp}{\graphObj{U}}
\newcommand{\treeLeaves}{\graphObj{L}}
\newcommand{\genFrontier}{\tilde{\frontier}}
\newcommand{\refine}[1]{#1^+}
\newcommand{\sieve}[2]{\proj_{#2}\left\{#1\right\}}
\newcommand{\matsieve}[2]{\proj_{#2}\left[#1\right]}
\newcommand{\tRefine}[2]{R_#2\left(#1\right)}
\newcommand{\pRefine}[3]{R_#3\left(#1; #2\right)}
\newcommand{\pRefineD}[2]{R\left(#1; #2\right)}
\newcommand{\romBasisC}{\romBasis^H}
\newcommand{\romStateVecC}{\romStateVec^H}
\newcommand{\romBasisF}{\romBasis^h}
\newcommand{\romStateVecF}{\romStateVec^h}
\newcommand{\prolong}{\mat{I}^h_H}
\newcommand{\adjoint}{\hat{\mat y}}
\newcommand{\adjointF}{\adjoint^h}
\newcommand{\adjointC}{\adjoint^H}
\newcommand{\prolongadjoint}{\adjoint^h_H}
\newcommand{\errorInd}{\ve{\delta}}
\newcommand{\errorIndC}{\errorInd^H}
\newcommand{\errorIndF}{\errorInd^h}
\newcommand{\frontierglobalF}{\frontierglobal^h}
\newcommand{\frontierglobalC}{\frontierglobal^H}
\newcommand{\snaps}{\mat{X}}
\newcommand{\romSnaps}{\hat{\snaps}}
\newcommand{\compSnaps}{q}
\newcommand{\compTime}{r}
\newcommand{\projSnaps}{\snaps_{\perp}}
\newcommand{\romProjSnaps}{\romSnaps_{\perp}}
\newcommand{\compVecs}{\mat{\Psi}}
\newcommand{\compVecsDummy}{\mat{\Xi}}
\newcommand{\romCompVecs}{\hat{\mat{\Psi}}}
\newcommand{\romCompVecsDummy}{\hat{\mat{\Xi}}}
\newcommand{\romInitBasis}{\hat{\initBasis}}
\newcommand{\romInitBasisCol}{\hat{\initBasisCol}}
\newcommand{\romInitBasisIter}{\romInitBasis^{(\compTime)}}
\newcommand{\corrector}{\hat{\mat{Z}}}
\newcommand{\compSize}{s}
\newcommand{\transComp}{\tilde{\mat{\Psi}}}
\newcommand{\transCompDummy}{\tilde{\mat{\Xi}}}
\newcommand{\projector}{\mat{P}}
\newcommand{\leafBasis}{\mat{Q}}
\newcommand{\canonicalVec}[1]{\mat{e}_{#1}}
\newcommand{\frontierH}{\mathcal{H}}
\newcommand{\projMetric}{\textsc{ProjectedMetric}}
\newcommand{\metric}{\hat{\mat{M}}}
\newcommand{\initBasisZ}{\initBasis^{(0)}}
\newcommand{\initBasisIter}{\initBasis^{(\compTime)}}
\newcommand{\initBasisIterLast}{\initBasis^{(\compTime - 1)}}
\newcommand{\romInitBasisZ}{\hat{\initBasis}^{(0)}}
\newcommand{\initBasisColZ}{\initBasisCol^{(0)}}
\newcommand{\initBasisColIter}{\initBasisCol^{(r)}}
\newcommand{\romInitBasisColIter}{\romInitBasisCol^{(r)}}
\newcommand{\tol}{\varepsilon}
\newcommand{\romTol}{\tol_{ROM}}
\newcommand{\stiefel}[1]{\mathbb{R}^{#1}_*}
\newcommand{\R}[1]{\mathbb{R}^{#1}}
\newcommand{\Rn}{\R{\stateVecSize}}
\newcommand{\decompSize}[1]{n_{#1}}
\newcommand{\initRomSize}{\romSize_0}
\newcommand{\initRomSizeIter}{\initRomSize^{(\compTime)}}
\newcommand{\initRomSizeIterLast}{\initRomSize^{(\compTime - 1)}}
\newcommand{\frontierBefore}{\frontier^{(\compTime - 1)}}
\newcommand{\initBasisSize}{\romSize_0}
\newcommand{\zero}{{\mat 0}}
\newcommand{\initBasisSizeBefore}{\initBasisSize^{(\compTime - 1)}}
\newcommand{\frontierMeet}{\frontier_M}
\newcommand{\globalancestor}{\chi}
\newcommand{\initBasisColIterBefore}{\initBasisCol^{(\compTime - 1)}}
\newcommand{\compressionCoeffs}[1]{\mat{A}^{(#1)}}
\newcommand{\explicitFlag}{\textsc{IsExplicit}}
\newcommand{\defConj}{\textsc{DeferredConjugation}}
\newcommand{\frontierIter}{\frontier^{(\compTime)}}
\newcommand{\treeSnaps}{\snaps}
\newcommand{\leafBasisCol}{\ve{q}}
\newcommand{\treeSnapsTrans}{\mat{Y}}
\newcommand{\dof}{\ve{d}}
\newcommand{\dofTrans}{\widetilde{\dof}}
\newcommand{\voltage}{v}
\newcommand{\recVoltage}{w}
\newcommand{\dofVoltage}{\mathcal{X}_\voltage}
\newcommand{\dofRecVoltage}{\mathcal{X}_\recVoltage}
\newcommand{\snapCount}{a}
\newcommand{\todo}[1]{{\color{red} #1}}
\DeclareMathOperator*{\argmin}{arg\,min}
\begin{document}
\numberwithin{equation}{section}
\begin{frontmatter}
  \title{Online adaptive basis refinement and compression for reduced-order models via vector-space sieving}

	\author[stanford]{Philip A.\ Etter\corref{stanfordcor}}
\ead{paetter@stanford.edu}
\address[stanford]{Stanford University}
\fntext[stanfordcor]{Institute for Computational and Mathematical Engineering, 496 Lomita Mall, Stanford University, Stanford, CA 94305-3035.}
	\author[sandia]{Kevin T.\ Carlberg\fnref{sandiacor}}
\ead{ktcarlb@sandia.gov}
\ead[url]{sandia.gov/~ktcarlb}

\address[sandia]{Sandia National Laboratories}
\fntext[sandiacor]{7011 East Ave, MS 9159, Livermore, CA 94550.
This paper describes objective technical results and analysis. Any subjective
views or opinions that might be expressed in the paper do not necessarily
represent the views of the U.S. Department of Energy or the United States
Government. Sandia National Laboratories is a multimission laboratory managed
and operated by National Technology \& Engineering Solutions of Sandia, LLC, a
wholly owned subsidiary of Honeywell International Inc., for the U.S.\
Department of Energy's National Nuclear Security Administration under contract
DE-NA0003525.}

\begin{abstract}
In many applications, projection-based reduced-order models
(ROMs) have demonstrated the ability to provide rapid approximate solutions to
high-fidelity full-order models (FOMs). However, there is no \textit{a priori}
assurance that these approximate solutions are accurate; their accuracy
depends on the ability of the low-dimensional trial basis to represent the FOM
solution.
As a result, ROMs can generate inaccurate approximate solutions, e.g., when the FOM
solution at the online prediction point is not well represented by training
data used to construct the trial basis.  To address this fundamental
deficiency of standard model-reduction approaches, this work proposes a novel
online-adaptive mechanism for efficiently enriching the trial
basis in a manner that ensures convergence of the ROM to the FOM, yet does not
incur any FOM solves. The mechanism is based
on the previously proposed adaptive $h$-refinement method for ROMs
\cite{carlberg2015adaptive}, but improves upon this work in two crucial ways.
First, the proposed method enables basis refinement with respect to
\textit{any orthogonal basis} (not just the Kronecker basis), thereby
generalizing the refinement mechanism and enabling it to be tailored to the
physics characterizing the problem at hand. Second, the proposed method
provides a fast online algorithm for periodically compressing the enriched basis via an
efficient proper orthogonal decomposition (POD) method,
which does not
incur any operations that scale with the FOM dimension.
These two features allow the
proposed method to serve as (1) a \textit{failsafe mechanism for ROMs}, as the
method enables the ROM to satisfy any prescribed error tolerance online (even
in the case of inadequate training), and (2) an \textit{efficient online
basis-adaptation mechanism}, as the combination of basis enrichment and
compression enables the basis to adapt online while controlling its
dimension.
\end{abstract}

\begin{keyword}

adaptive refinement\sep adaptive coarsening\sep model reduction\sep
dual-weighted residual\sep adjoint error estimation
\end{keyword}

\end{frontmatter}

\section{Introduction}\label{sec:intro}

Physics-based modeling and simulation now plays an essential role across a
wide range of design, control, decision-making, and discovery applications in
science and engineering. However, as such simulations are playing an
increasingly important role, greater demands are being
placed on their fidelity. As a result, these models are often characterized by
fine spatiotemporal resolution that results in large-scale models whose
simulation can consume months on a supercomputer. This computational burden
precludes such high-fidelity models from being employed in important
\textit{real-time} or \textit{many-query} scenarios that require the
(parameterized) model to be simulated very quickly (e.g., model predictive
control) or thousands of times (e.g., design optimization).


Naturally, the importance of real-time and many-query problems has resulted in a demand for fast
approximation techniques to mitigate the computational bottleneck of
simulating the high-fidelity full-order model (FOM). Projection-based
reduced-order models (ROMs) comprise a promising class of such techniques that
provide fast (yet often accurate) approximations by reducing the
dimensionality and complexity of the FOM. ROMs are typically deployed in two
stages. First, during the (training) \emph{offline stage}, these methods execute
computationally expensive training tasks to construct a low-dimensional
`trial' subspace.  In many cases (e.g., proper orthogonal decomposition, the
reduced-basis method), these training tasks entail simulating the FOM for
several points in parameter space.  
%
Second, during the (deployed) \emph{online stage}, the ROM computes fast
approximate solutions at arbitrary points in the parameter space via projection:
it computes a solution in the low-dimensional trial subspace by enforcing the
high-fidelity-model residual to be orthogonal to a test
subspace of the same (low) dimension.  If the FOM is nonlinear in the state or
nonaffine in functions of the parameters, `hyper-reduction' methods are
required to ensure the complexity of the ROM remains independent of the FOM
dimension; see
Refs.~\cite{rozza2007reduced,benner2013survey,hesthaven2015certified} for
reviews of the subject.

The predictive accuracy of the ROM depends on the ability of the trial
subspace to represent the FOM solution at the online prediction point of
interest.  In many practical scenarios, the trial subspace
is deficient for this purpose; for example, if the basis constructed using
proper orthogonal decomposition (POD) and the physics characterizing the online
prediction point (e.g., discontinuities) were not observed during training,
then the ROM will be incapable of resolving this phenomenon and will yield an
inaccurate online approximation.  This simple observation exposes a
fundamental deficiency of standard model-reduction approaches: ROMs are not
ensured to produce accurate approximations 
when they are deployed at online points
whose FOM-solution characteristics were absent from the training data. In
other words, ROMs are subject to
generalization error.  
In our experience, the inability of ROMs to provide \textit{a priori}
assurances of accurate predictions at arbitrary online prediction points is
\textit{the primary obstacle} to the widespread adoption of ROMs in science
and engineering.

Researchers have developed two categories of approaches that aim to overcome this
deficiency.  
In our view, such
a method should satisfy two desiderata: it should
\begin{enumerate}
\item ensure monotone convergence of the ROM to the FOM, and
\item incur an operation count that is independent of the FOM dimension.
\end{enumerate}
Unfortunately, no currently available method satisfies both desiderata;
however, some methods satisfy one of them.  
The
first (and largest) category of approaches reverts to the FOM when the ROM is
detected to be inaccurate, and subsequently enriches the ROM with information
gleaned from the FOM solve.
In the simplest case, one can add the FOM solution (possibly computed
over a subdomain only) to the trial basis and proceed with the enriched ROM
\cite{arian2000trust, ryckelynck2005priori,weickum2009multi,kim2009skipping,
teng2015subspace, ohlberger2015error}. A more involved approach enriches the
trial subspace by generating a Krylov subspace using the FOM
\cite{carlberg2009adaptive,carlbergKrylov2015}.  While this category of
methods indeed enriches the trial subspace with missing solution
characteristics, it incurs an operation count that depends on the FOM
dimension, which ultimately precludes online efficiency.

The second category of approaches involves adapting the low-dimensional basis
online without explicitly solving the full-order model. The first class of methods in this
category comprises `dynamic sampling' approaches.  Peherstorfer et
al.~\cite{peherstorfer2015online} propose to continually sub-sample new
entries of the FOM residual in order to compute online updates to an adaptive
basis used to represent both the velocity and the state; they also propose
variants that employ gappy POD to compensate for limited measurements
\cite{peherstorfer2016dynamic} and propagate coherent structures in
transport-dominated problems \cite{peherstorfer2018model}. While these methods
are characterized by an operation
count that is independent of the FOM dimension, they
do not ensure convergence to the FOM. The second class of
methods corresponds to `geometric subspace' techniques, which employ geodesic or
tangent-space structure to adapt the trial basis. Within this class, Zimmerman
et al.\  \cite{zimmermann2018geometric} propose performing online geometric updates of
the trial basis by solving a Grassmannian Rank-One Subspace Estimation (GROUSE)
optimization problem. Alternatively Peng et al.\
\cite{peng2014online} propose employing tangent-space information to enrich the
trial basis. Unfortunately, the former does not ensure 
convergence to the FOM, while the later performs POD on FOM snapshot
data, which incurs an operation count that depends on the FOM dimension. The
third class of methods corresponds to `adaptive local--global' methods \cite{efendiev2016online,
yang2017online}, which construct both a global ROM and a set of local ROMs for
different regions of the spatial domain. When the global ROM is deemed to be
inaccurate, these methods enrich the global trial basis with the solution
computed from approximately solving the FOM linear system using a local ROM.
However, these methods also fail to ensure convergence to
the FOM.

In this work, we propose an adaptive method that---when equipped with
hyper-reduction---satisfies both of the above
desiderata. The method is based on the previously proposed $h$-refinement
method for ROMs \cite{carlberg2015adaptive}, which is
analogous to mesh-adaptive $h$-refinement for finite-element, finite-volume,
and discontinuous-Galerkin discretizations. ROM $h$-refinement enriches the low-dimensional
trial basis online by `splitting' a given basis vector into multiple vectors
with disjoint discrete support. The approach identifies basis vectors to
split using a dual-weighted-residual approach that aims to reduce the error in
an output quantity of interest. Ultimately, the method generates a hierarchy
of subspaces online that converges to the full space while ensuring an
operation count that is independent of the FOM dimension; this provides a
failsafe mechanism for the ROM, as it enables the ROM to satisfy any
prescribed error tolerance regardless of its original fidelity.  Despite these
attractive attributes, this method has two noticeable areas for improvement:
\begin{enumerate}
	\item \label{drawback:split}The method always performs online basis refinement by `splitting' basis
	vectors entry-wise. While this refinement mechanism can lead to rapid
		convergence for some problems, the convergence rate may be quite slow for
		others, i.e., many splits may be required before the basis can adequately
		represent the FOM solution.
	\item \label{drawback:reset}The ROM dimension is controlled by simply \textit{resetting} the
		refined basis to the original basis after a prescribed number of time
		steps,
		before which the basis dimension grows monotonically. This reset
		effectively discards all information gained about the characteristics of
		the online FOM solution during the refinement procedure.
\end{enumerate}
This paper proposes techniques that address each of these drawbacks.  To
address drawback \ref{drawback:split}, we propose a generalization of the
$h$-refinement basis-splitting mechanism. In particular, we propose to perform
online refinement by projecting trial basis vectors onto progressively finer
orthogonal decompositions of their encompassing vector space, which we take to be $\Rn$.
Because we allow the refinement mechanism to be constructed from \textit{any} orthogonal decomposition of $\Rn$, this approach enables a more general class of
refinement mechanisms than simple entry-wise splitting.  This allows the
practitioner to tailor the refinement mechanism to the particular physics of
the problem at hand.  
 We present this contribution in sections \ref{sec:math_framework},
 \ref{sec:basis_refinement}.
To address drawback \ref{drawback:reset}, we propose a
fast online method for basis compression that computes the POD of refined-ROM
solutions while incurring an $\stateVecSize$-independent operation count.  Not
only does this basis-compression procedure control the ROM dimension online,
it also enables the basis to adaptively evolve in time.  Numerical experiments
demonstrate that both of these contributions enable the new approach to yield
significant performance improvements over the original $h$-refinement method.
We present this contribution in section \ref{sec:online_basis_compression}.


We briefly remark that some `adaptive' methods exist that tailor the ROM to
specific regions of the parameter space \cite{amsallem2008interpolation,
amsallem2009method, eftang2010hp, haasdonk2011training, drohmann2011adaptive,
peherstorfer2014localized}, time domain \cite{drohmann2011adaptive,
dihlmann2011model}, and the state space \cite{amsallem2012nonlinear,
peherstorfer2014localized}. The connotation of `adaptive' is different in the
context of the present work. The works cited above are `adaptive' in that they
construct separate ROMs for each region offline with the objective of reducing
the ROM dimension. This is quite different from the `adaptive' method we
propose, which performs \textit{a posteriori} refinement of the ROM during the
online stage. 

The remainder of the paper is organized as follows. Section \ref{sec:probForm}
formulates the problem, including the FOM (section \ref{sec:FOM}), the ROM
(section \ref{sec:ROM}), and objectives of this work (section \ref{sec:objectives}).
Section \ref{sec:math_framework} provides the mathematical framework of the
proposed refinement mechanism, including notation (section
\ref{sec:notation}), a description of vector space sieving (section
\ref{sec:sieve}), the refinement tree (section
\ref{sec:refinement_tree}), and frontiers (section
\ref{sec:frontiers}). Section \ref{sec:schema} provides the algorithm schema.
Section \ref{sec:basis_refinement} describes the basis refinement mechanism,
including frontier refinement (section \ref{sec:frontierRefine}),
dual-weighted-residual error indicators (section \ref{sec:dwrei}), the
refinement algorithm (section \ref{sec:refineAlg}), and a technique to resolve
ill-conditioning and ensure linear independence of the refined basis (section
\ref{sec:illCond}). Section \ref{sec:tree_construction} describes construction
of the refinement tree. Section \ref{sec:online_basis_compression} describes
the online basis-compression algorithm, including compression via
metric-corrected POD (section \ref{sec:compressionmetric}), a description on
computing the metric (section \ref{sec:metric}), a characterization of the
`meet' of frontiers (section \ref{sec:meetFrontiers}), a method for
updating the projected metrics (section \ref{sec:update_proj_metrics}), and
finally the basis-compression algorithm itself (section
\ref{sec:basiscompressionalg}). Section \ref{sec:complexity} discusses the
algorithm's complexity, and section \ref{sec:numericalExp} provides numerical experiments that
illustrate the benefits of the proposed method. Section \ref{sec:conclusions}
provides conclusions and an outlook for future work.  Finally, 
\ref{sec:proofs} provides proposition proofs.

\section{Problem formulation}\label{sec:probForm}

\subsection{Full-order model}\label{sec:FOM}

We consider solving a time sequence of parameterized systems of algebraic
equations
\begin{equation} \label{fom}
	\residualIter(\stateVecIter; \param) = 0 \,,\quad
\iter \in \nat{\ntimeTot},
\end{equation}
where $\nat{\ntimeTot} \equiv \{1, \ldots, \ntimeTot\}$, $\stateVecIter \in \Rn$ denotes the
state of the system at the $\iter$th time instance, $\param \in
\paramSet \subset \RR{\paramSetDim}$ denotes input parameters, and
$\residualIter \colon \Rn \times \paramSet \rightarrow \Rn$ denotes the
residual operator 
at the $\iter$th time instance.\footnote{In this work, we consider $\subset$ to denote
subsets, and $\subsetneq$ to denote proper subsets.}
This problem setting arises in a broad range of applications, e.g., from the
spatial discretization of elliptic partial differential equations (PDEs) (e.g., where
$\residualIter:(\stateVec; \param) \mapsto \Amat(\param) \stateVec(\param)
- \bvec(\param)$ and $\ntimeTot = 1$ for elliptic linear PDEs), or from the implicit time discretization of
systems of ordinary differential equations (ODEs) $\dot{\stateVec} = \velocity(\stateVec; \mu)$ (e.g., where
$\residualIter \colon (\ve{x}; \param) \mapsto \ve{x} - \stateVecLast - \Delta
t \, \velocity(\ve{x}; \param)$ in the case of backward Euler).
Often, the practitioner is primarily interested in a quantity 
that is a functional of the state, i.e.,
\begin{equation} \label{interest_fom}
	\interestIter \defeq \interestFunc(\stateVecIter; \param) \,,\quad\
\iter \in \nat{\ntimeTot},
\end{equation}
where $\interestIter \in \mathbb{R}$ and $\interestFunc :
\mathbb{R}^{\stateVecSize} \times \paramSet \rightarrow \mathbb{R}$. 

In many scenarios, the dimension $\stateVecSize$ of the full-order-model (FOM)
equations (\ref{fom}) is large, which makes computing their solution
prohibitively expensive in real-time and many-query settings that demand fast
evaluation of
the input--output map $\param \mapsto \{ \interest^1, \ldots, \interest^\ntimeTot
\}$.  This work considers projection-based reduced-order models (ROMs) to
mitigate this computational burden.

\subsection{Reduced-order model}\label{sec:ROM}

Model-reduction methods typically execute two stages. First, these methods
perform a
computationally expensive (training) \textit{offline stage}
to construct (1) a low-dimensional trial
subspace spanned by the columns of the basis (in matrix form) $\romBasis \in \stiefel{\stateVecSize \times
\romSize}$ (with $\romSize\ll\stateVecSize$), and (2) an associated test
subspace spanned by the columns of the basis
matrix $\testBasis \in \stiefel{\stateVecSize \times \romSize}$.
Here $\stiefel{m\times n}$ denotes the set of full-column-rank $m\times n$
matrices (i.e., the non-compact Stiefel manifold).
In the case of POD, the offline stage comprises
solving the FOM equations
\eqref{fom} for $\param\in\paramSetTrain\subsetneq \paramSet$,
collecting the associated solution snapshots $\stateVecIter(\param)$ for
$\iter=1,\ldots,\ntimeTot$ and $\param\in\paramSetTrain$, and setting the trial basis to the dominant left singular
vectors of the `snapshot matrix' whose columns correspond to these solution snapshots.
Then, during the computationally inexpensive (deployed) \textit{online stage}, 
these methods compute approximate solutions to the FOM equations \eqref{fom}
via projection: they seek approximate solutions in the 
affine trial subspace $\trialBasisOffset +
\range(\romBasis) \subset \mathbb{R}^{\stateVecSize}$, where
$\trialBasisOffset\in\Rn$ denotes a reference state,
and enforce orthogonality of the FOM residual to the test basis, i.e.,
they compute generalized coordinates
$\romStateVecIter(\param) \in \mathbb{R}^\romSize$ satisfying 
\begin{equation} \label{rom}
	\testBasis^T \residualIter\left(\trialBasisOffset + \romBasis
	\romStateVecIter; \param\right) = \zero\, .
\end{equation}
The ROM approximate solution then corresponds to 
$\trialBasisOffset + \romBasis
	\romStateVecIter(\param)$.
When the residual operator $\residualIter$ is affine in its first argument and
affine in functions of its second argument,
solving the ROM equations (\ref{rom}) can be performed efficiently by
precomputing low-dimensional affine operators during the offline stage and
solving a $\romSize \times
\romSize$ linear system during the online stage. However, when the residual operator $\residualIter$
is nonlinear, then `hyper-reduction' methods 
such as
empirical
interpolation \cite{barrault2004eim,chaturantabut2010nonlinear}, collocation
\cite{legresley2006application, astrid2008missing, ryckelynck2005priori},
or gappy proper orthogonal decomposition (POD)
\cite{astrid2008missing, carlberg2013gnat}
must be employed to ensure that assembling
the reduced equations (\ref{rom}) incurs an $\stateVecSize$-independent operation count. While
we do not explicitly consider these hyper-reduction methods in this work, the
proposed methods are 
forward compatible with these techniques. Future work will consider their
integration.

Many ROM techniques employ a test basis $\testBasis$ corresponding to a linear
transformation of the trial basis $\romBasis$ such that
\begin{equation} \label{test_trial}
	\testBasis = \testTrialTrans \, \romBasis \,,
\end{equation}
where $\testTrialTrans:\RR{\stateVecSize}\times \paramSet \rightarrow
\mathbb{R}^{\stateVecSize \times \stateVecSize}$ denotes
a transformation matrix that depends in general on the state and parameters.
For example, Galerkin projection
employs $\testTrialTrans = \identity$; balanced truncation employs $\testTrialTrans =
\mat{Q}$, where $\mat{Q}$ is the observability Gramian of the linear
time-invariant system; least-squares Petrov--Galerkin projection
\cite{legresley2006application,bui2008model,bui2008parametric,CarlbergGappy,carlbergGalDiscOpt} 
employs $\testTrialTrans = \frac{\partial \residualIter }{\partial \ve{x}}(\stateVecIter; \param)$; for
linearized compressible-flow problems, $\testTrialTrans$ can be chosen to
ensure stability \cite{barone2009stable}. When the test basis can be
expressed in the form of Eq.~\eqref{test_trial} for some 
transformation matrix $\testTrialTrans$, the Petrov--Galerkin projection
characterizing the ROM equations
(\ref{rom}) is
equivalent to Galerkin projection performed on a modified residual
$\gResidualIter:(\stateVec;\param)\mapsto \testTrialTransGen^T
\residualIter(\stateVec;\param)$, and
Eq.~\eqref{rom} is equivalent to 
\begin{equation} \label{galerkin_rom}
	\romBasis^T \gResidualIter\left(\trialBasisOffset + \romBasis
	\romStateVecIter; \param\right) = \zero \,.
\end{equation}
The remainder of the paper restricts attention to the Galerkin projection
\eqref{galerkin_rom}, as it corresponds to a wide range of practical
Petrov--Galerkin ROMs characterized by a test basis of the form
\eqref{test_trial}.

\subsection{Objectives}\label{sec:objectives}

If the projection error of the FOM solution $\stateVecIter(\param)$ onto the
trial subspace $\trialBasisOffset+\range(\romBasis)$ is large at a particular
time instance, then the ROM solution $\trialBasisOffset + \romBasis
\romStateVecIter(\param)$  will provide a poor approximation to the FOM
solution $\stateVecIter(\param)$.  
The original
adaptive $h$-refinement method \cite{carlberg2015adaptive} provided a
promising approach for enriching the basis
$\romBasis$ in this scenario. However, as described in the
introduction, this method exhibits two significant shortcomings:
\begin{enumerate}
\item The method always performs refinement by `splitting' basis vectors entry-wise.
	This refinement mechanism may not always lead to fast convergence. For
		example, when the FOM corresponds to the discretization of a PDE problem,
		basis splitting can introduce sharp gradients in the refined basis
		vectors; if the PDE solution is expected to be smooth,
		then this refinement mechanism can yield slow convergence. 
		Our first objective is to address this
		shortcoming by introducing a new mathematical framework for basis
		refinement, which we present in section \ref{sec:math_framework}. We then
		use this framework to generalize the original basis-splitting refinement
		mechanism in sections \ref{sec:basis_refinement} and
		\ref{sec:tree_construction}. The resulting extension enables a broader
		class of refinement mechanisms than simply entry-wise basis splitting.
\item The method controls the ROM dimension by simply resetting the basis to
	the original one after a prescribed number of time steps. Our second
		objective is to improve upon this reset strategy. In particular, we
		observe that basis refinement provides valuable information about online FOM solution
		components that were not representable with the original basis. Our
		objective is to devise a method of periodically compressing the refined
		basis to control its dimensionality without discarding this additional
		information. We address this objective in section
		\ref{sec:online_basis_compression}.
\end{enumerate}

\section{Mathematical framework} \label{sec:math_framework}

We assume that we are given an initial basis $\initBasis \in
\stiefel{\stateVecSize \times \romSize_0}$ such that $\romBasis = \initBasis$ initially.  We seek to enrich
this basis $\initBasis$ by recursively decomposing columns of $\initBasis$ until 
the basis is sufficiently rich to accurately represent the FOM
solution $\stateVecIter(\param)$. In this section, we establish
mathematical preliminaries that will be leveraged to achieve this goal. 
Section \ref{sec:notation} introduces required notation.
Section \ref{sec:sieve} introduces \textit{vector-space sieving}, which is 
the fundamental refinement mechanism we employ.
Section \ref{sec:refinement_tree} introduces the \textit{refinement
tree} data structure that helps to prescribe the refinement strategy. Section
\ref{sec:frontiers} introduces the notion of a refinement-tree
\textit{frontier}, which allows the algorithm to monitor the current level of
refinement applied to a basis vector.

\subsection{Notation}\label{sec:notation}

We use somewhat nonstandard notation for matrices that enables their
entries to be indexed by elements of arbitrary sets (not just integers).
For this purpose, we write an $A \times B$ matrix for arbitrary sets $A$ and $B$ as
\begin{equation}
M \in \mathbb{R}^{A \times B} \, 
\Leftrightarrow
	M:A\times B\longrightarrow \RR{}
\end{equation}
such that
$\mathbb{R}^{A \times B}$ denotes the set of mappings from 
$A \times B$ to $\mathbb{R}$. We denote
element $(a, b)$ of $M$ as $M_{a, b}\equiv M(a,b)\in\RR{}$ for $a \in A$ and $b \in B$. For
these generalized matrices, the matrix product is defined as expected,
i.e.,
for $M_1 \in \mathbb{R}^{A \times B}$ and $M_2 \in \mathbb{R}^{B \times C}$,
the matrix product $M_1 M_2 \in \mathbb{R}^{A \times C}$ is 
\begin{equation}
[M_1 M_2]_{a, c} = \sum_{b \in B} [M_1]_{a, b} [M_2]_{b, c} \,.
\end{equation}
Given an array of these matrices $M_{ij} \in \mathbb{R}^{A_i \times B_j}$,
they can be concatenated into a new matrix according to
\begin{equation}
M = \left[\begin{array}{ccc} M_{11} & \cdots & M_{1r} \\ \vdots & \ddots &
\vdots \\ M_{s1} & \cdots & M_{sr} \end{array}\right] \in \mathbb{R}^{\left(\bigsqcup_i A_i \right) \times \left(\bigsqcup_j B_j\right)} \,,
\end{equation}
where $\bigsqcup_i A_i$ denotes the disjoint union of the sets $A_i$. We use
the disjoint union because the sets $A_i$
may have non-empty intersection, and we must treat the same index appearing in
$A_i$ and 
$A_j$ with $i\neq j$
differently.
In this notation, the 
standard set of $n
\times m$ real-valued matrices is formally represented as
$\mathbb{R}^{\nat{n} \times \nat{m}}$ (not 
$\mathbb{R}^{n \times m}$) although we use the two interchangably.

Finally, if $M \in \mathbb{R}^{A
\times B}$, then for $\tilde{A} \subset A$ and $\tilde{B} \subset B$, we denote the
$\tilde{A}, \tilde{B}$ submatrix of $M$ as
\begin{equation}
M_{\tilde{A}, \tilde{B}} \in \mathbb{R}^{\tilde{A} \times \tilde{B}} \,,
\end{equation}
and its entries are simply the corresponding ones in $M$, i.e.,
$(M_{\tilde{A}, \tilde{B}})_{a, b} = M_{a, b}$ for $a\in\tilde A$ and
$b\in\tilde B$. If $\tilde{A} =
A$ or $\tilde{B} =B$, we use a colon as a shorthand, i.e., $M_{:,\tilde{B}}
\equiv M_{A, \tilde{B}}$ and $M_{\tilde{A}, :} \equiv M_{\tilde{A}, B}$.

\subsection{Vector-space sieving} \label{sec:sieve}
We enrich the initial basis
$\initBasis$
using the idea of `sieving' a
vector through a chosen decomposition of a $\Rn$. 
More precisely, consider a decomposition $\decomp \equiv \{ \parentSpace_1, \ldots,
\parentSpace_{\decompSize{\decomp}} \}$ of a `parent' inner-product space 
$(\rootSpace, \langle \cdot,\cdot \rangle)$
(e.g., $\rootSpace=\Rn$ with $\langle \cdot,\cdot \rangle$ the Euclidean inner
product) such that
\begin{equation}
\rootSpace = \sum_{\parentSpace \in \decomp} \parentSpace \,.
\end{equation}
Then, any vector $\phiVec \in \rootSpace$ 
can be decomposed into components $\phiVec_{\parentSpace} \in \parentSpace$
for all $\parentSpace\in\decomp$,
i.e.,
\begin{equation} \label{sieve}
\phiVec = \sum_{\parentSpace \in \decomp} \phiVec_\parentSpace \, .
\end{equation}
If the vector spaces $\parentSpace_i$ are orthogonal in the inner product
$\langle\cdot,\cdot\rangle$, then this
decomposition is unique and is given by the orthogonal projection of $\phiVec$
onto the vector spaces $\parentSpace_i$, i.e.,
\begin{equation} \label{sieve_proj}
\phiVec_\parentSpace =
	\proj_{\parentSpace}(\phiVec),\quad\parentSpace\in\decomp \,,
\end{equation}
where 
$\proj_{\parentSpace}$ denotes
the orthogonal projector onto $\parentSpace$, i.e.,
\begin{equation}
\proj_{\parentSpace}:\phiVec \mapsto \argmin_{\ve{u} \in \parentSpace}
	\|\phiVec - \ve{u}\|,
\end{equation}
where $\|v\|\equiv\sqrt{\langle v, v\rangle}$.
Henceforth, we consider all decompositions to be orthogonal.  We now
formally define an orthogonal decomposition of a vector space and the 
sieve of a vector through one.

\begin{definition}[orthogonal decomposition]
We refer to a collection of vector spaces $\decomp \equiv \{ \parentSpace_1, \ldots,
	\parentSpace_{\decompSize{\decomp}} \}$ with each $\parentSpace_i$ a
	nontrivial subspace of an inner-product space $\rootSpace$ as an
	\textbf{orthogonal decomposition} of $\rootSpace$ if
\begin{enumerate}
\item the subspaces sum to $\rootSpace$, i.e.,
\begin{equation}
	\rootSpace = \sum_{\parentSpace \in \decomp} \parentSpace,\quad\text{and}
\end{equation}
\item the subspaces are orthogonal,
	i.e.,
\begin{equation}
\parentSpace_i \perp \parentSpace_j, \quad \forall\parentSpace_i, \parentSpace_j \in
	\decomp,\ i\neq j\,,
\end{equation}
where the operator $ \perp$ is defined as follows: $\parentSpace_i
		\perp \parentSpace_j $ if and only if $\langle u_i,u_j\rangle=0$ for all
		$u_i\in\parentSpace_i,\,u_j\in \parentSpace_j$.
\end{enumerate}
\end{definition}

\begin{definition}[sieve]
	We denote the \textbf{sieve} of a vector $\phiVec \in \rootSpace$ through an
	orthogonal decomposition $\decomp \equiv \{ \parentSpace_1, \ldots,
	\parentSpace_{\decompSize{\decomp}} \}$ of an inner-product space
	$\rootSpace$ by $\sieve{\phiVec}{\decomp}$ and define it as
\begin{equation}
\sieve{\phiVec}{\decomp} \equiv \left\{ \proj_{\parentSpace_1}(\phiVec), \ldots,
	\proj_{\parentSpace_{\decompSize{\decomp}}}(\phiVec)\right\} \subset \rootSpace \,.
\end{equation}
When $\rootSpace = \mathbb{R}^\stateVecSize$ and the above is desired in matrix form, we instead write
\begin{equation}
\matsieve{\phiVec}{\decomp} \equiv \left[\begin{array}{ccc}
\proj_{\parentSpace_1}(\phiVec) & \cdots &
\proj_{\parentSpace_m}(\phiVec)\end{array}\right] \in \R{\nat{\stateVecSize} \times \decomp} \,.
\end{equation}
\end{definition}

\begin{remark}[Vector-space sieving as a generalization of basis splitting]
Consider the particular case where the subspaces $\parentSpace_i$ are each spanned individually
	by Kronecker basis vectors (i.e., the canonical unit vectors). In this case,
	sieving a basis vector corresponds to `splitting' it entry-wise into vectors that have
	disjoint (discrete) support and whose nonzero elements are equal to the
	corresponding elements of the original vector. This is equivalent to the
	original $h$-refinement basis-splitting mechanism
	\cite{carlberg2015adaptive}; thus, vector-space sieving comprises a
	generalization of this mechanism. 
\end{remark}

If the initial basis $\initBasis$ is
not rich enough to accurately represent the FOM solution
$\stateVecIter(\param)$,
and we are given an orthogonal decomposition $\decomp\equiv\{\parentSpace_1
,\ldots, \parentSpace_{\decompSize{\decomp}}\}$, we can sieve selected columns of
$\initBasis$ through this decomposition 
to enrich this basis in the hope of better
representing $\stateVecIter(\param)$. 
We adopt this refinement mechanism for two reasons:

\begin{enumerate}
\item We desire a refinement mechanism that
	produces hierarchical trial subspaces,
\begin{equation}
	\range\left(\romBasis^{(0)}\right) \subset
	\range\left(\romBasis^{(1)}\right) \subset
	\range\left(\romBasis^{(2)}\right) \subset \cdots\,,
\end{equation}
where $\romBasis^{(0)} = \initBasis$ and $\romBasis^{(i)}$ denote progressive
		refinements of the original basis $\romBasis^{(0)}$. This
		property is desirable because it ensures that refinement equips the ROM with
		strictly higher fidelity. Vector-space sieving achieves this property via
		recursion:
		after sieving a vector $\phiVec$ through a decomposition $\decomp\equiv\{\parentSpace_1
,\ldots, \parentSpace_{\decompSize{\decomp}}\}$ to obtain the vectors
		$\phiVec_{\parentSpace}$ for $\parentSpace\in\decomp$, we can continue to sieve
		the vector $\phiVec$ by decomposing the subspaces $\parentSpace\in\decomp$
		into even finer 
		subspaces. For example, consider an orthogonal decomposition $\mathcal
		W\equiv\{\childSpace_1,\ldots,\childSpace_{n_{\mathcal W}}\}$ of
		$\parentSpace_1$.
		Sieving $\phiVec_{\parentSpace_1}$ through this decomposition yields
		vectors $\phiVec_{\parentSpace_1\childSpace}$, $\childSpace\in\mathcal
		W$ satisfying
\begin{equation} \label{double_sieve}
	\phiVec_{\parentSpace_1} = \sum_{\childSpace\in\mathcal
		W} \phiVec_{\parentSpace_1\childSpace} \,.
\end{equation}
The resulting refined basis satisfies
\begin{equation}
	\vecspan(\{\phiVec_{\parentSpace_1}, \ldots,
	\phiVec_{\parentSpace_{\decompSize{\decomp}}}\}) \subset
	\vecspan(\{\phiVec_{\parentSpace_1\childSpace_1}, \ldots,
	\phiVec_{\parentSpace_1\childSpace_{n_{\mathcal W}}}, \phiVec_{\parentSpace_2}, \ldots,
	\phiVec_{\parentSpace_{\decompSize{\decomp}}}\}) \,.
\end{equation}
Therefore, vector-space sieving can generate hierarchical subspaces by
		recursively decomposing $\rootSpace$ in this manner. This recursive
		decomposition naturally gives rise to a tree data structure, which we
		characterize in section \ref{sec:refinement_tree}.
\item We desire a progressive refinement mechanism that
	ensures the ROM converges to the FOM, as this property ensures that the
		refined ROM 
		can (eventually) recover the FOM solution $\stateVecIter(\param)$ given any
		initial basis. Again,
		this can be achieved by vector-space sieving. Suppose that
		$\rootSpace=\Rn$ and
		we employ an orthogonal decomposition that is as fine as possible, i.e.,
$\decomp\equiv\{\parentSpace_1
,\ldots, \parentSpace_{\stateVecSize}\}$,
		where $\dim(\parentSpace_i)=1$, $i=1,\ldots,\stateVecSize$. Each $\parentSpace_i$ is then spanned by a single vector $\leafBasisCol_i$, and sieving $\phiVec$ through this decomposition gives
\begin{equation}
	\phiVec = \sum_{i = 1}^{\stateVecSize} \alpha_i \leafBasisCol_i \,,
\end{equation}
		Assuming that $\alpha_i \neq 0$ for $i=1,\ldots,\stateVecSize$, we have 
\begin{equation}
	\vecspan(\{\phiVec_{\parentSpace_1}, \ldots, \phiVec_{\parentSpace_n}\}) =
	\vecspan(\{\leafBasisCol_1, \ldots,
	\leafBasisCol_n\}) = \Rn \,.
\end{equation}
Thus, barring degenerate cases, sieving a vector through the finest
		decompositions of $\mathbb{R}^\stateVecSize$ recovers the FOM, as the FOM
		is equivalent to a ROM
		with trial subspace of $\Rn$.
\end{enumerate}

We now describe the refinement tree, which equips vector-space sieving with
these two properties.

\subsection{The refinement tree} \label{sec:refinement_tree}

Recursively decomposing $\mathbb{R}^\stateVecSize$ into finer 
subspaces can be conceptualized as constructing a tree, as every decomposition
of a vector space $\parentSpace$ into $\childSpace_1, \ldots, \childSpace_m$
yields a natural parent--child relationship between the parent $\parentSpace$
and its children $\childSpace_1, \ldots, \childSpace_m$.

To make this  tree structure explicit, we define a \textit{refinement tree}.
We begin by specifying notation.  A directed graph $G = (\treeNodes,
\treeEdges)$ is given by a set of vertices $\treeNodes$ and a set of directed
edges $\treeEdges$. Each edge $e \in \treeEdges$ takes the form of a
tuple of two vertices $e = (v_1, v_2)$ with $v_1, v_2 \in \treeNodes$, where
$(v_1, v_2)$ is interpreted as an edge from $v_1$ to $v_2$. 
We refer to a directed graph $T$ as a rooted tree if it is acyclic 
and every vertex $v \in \treeNodes$ has in-degree one (i.e., one edge
enters $v$), except for a unique \textit{root} vertex $r \in \treeNodes$ that
has in-degree zero. For any non-root vertex $v \in \treeNodes$, if
$(u, v)$ is the unique edge entering vertex $v$, then we refer to $u$ as the
\textit{parent} of $v$ and conversely $v$ as a \textit{child}
of $u$. In general, we denote the parent of a vertex $v \in \treeNodes$ as $\parent_\tree(v)$. If there is a (potentially trivial) directed path from $u\in
\treeNodes$ to $v\in \treeNodes$, then we say that $u$ is an \textit{ancestor}
of $v$ and conversely $v$ is a \textit{descendant} of $u$. We refer to vertices with
out-degree zero (i.e., vertices with no children) as
\textit{leaves}; we denote the set of leaves
by
$\treeLeaves\subset\treeNodes$
. We
denote the children of a vertex $v$ in a tree $T$ as $\children_T(v)
\subsetneq \treeNodes$.

\begin{definition}[$\rootSpace$-refinement tree] A
	$\rootSpace$-\textbf{refinement tree} $\tree \equiv (\treeNodes, \treeEdges)$ for
	an inner-product space $\rootSpace$ is a rooted tree where the vertex set
	$\treeNodes$ is composed of subspaces of $\rootSpace$. Moreover, it has the
	following properties:

\begin{enumerate}
	\item\label{prop:rootV} \textit{The root is $\rootSpace$}. That is,
		$r=\rootSpace$.
	\item\label{prop:childrenOrth} \textit{Children correspond to an orthogonal decomposition of the parent}. The children of any vertex
		$\parentSpace \in \treeNodes$ correspond to an orthogonal decomposition of
		$\parentSpace$, i.e.,
 for all $\parentSpace \in \treeNodes$, we have
\begin{equation}
	\parentSpace = \sum_{(\parentSpace, \childSpace) \in \treeEdges}
	\childSpace \,,
\end{equation}
	and the children of $\parentSpace$ are all orthogonal, i.e.,
\begin{equation}
\childSpace \perp \mathbb{Y}, \qquad \forall (\parentSpace, \childSpace),
	(\parentSpace, \mathbb{Y}) \in \treeEdges \,.
\end{equation}
This property ensures that recursive sieving of a single vector will produce hierarchical trial subspaces.
\item \label{prop:leavesDimOne}\textit{Leaves have dimension one}. The leaves of the tree correspond to
	vector spaces that cannot be decomposed further, i.e.,
\begin{equation}
	\dim (\leafSpace) = 1, \qquad \forall \leafSpace \in \treeLeaves \,,
\end{equation}
This property enables recursive sieving of a single vector to recover a basis
		for the original space $\rootSpace$.
\end{enumerate}
\end{definition}

We now provide two basic results that lend intuition into
$\rootSpace$-refinement trees. \ref{sec:proofs} contains all proofs.

\begin{restatable}{proposition}{propsubset} \label{subset_proposition}
For any $\parentSpace, \childSpace \in \treeNodes$, $\childSpace$ is a
	descendant of $\parentSpace$ in the 
	$\rootSpace$-refinement tree $\tree \equiv (\treeNodes, \treeEdges)$
	iff $\childSpace \subset \parentSpace$.
\end{restatable}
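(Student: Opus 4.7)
My plan is to prove the two directions separately, with the forward implication reducing to a routine induction and the reverse implication relying on a lowest-common-ancestor argument.

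For the forward direction (descendant implies subspace), I would induct on the length of the directed path from $\parentSpace$ to $\childSpace$. The base case of a trivial path gives $\childSpace = \parentSpace$, from which $\childSpace \subset \parentSpace$ is immediate. For the inductive step, let $(\parentSpace, \mathbb{W})$ be the first edge of the path. Property \ref{prop:childrenOrth} of the $\rootSpace$-refinement tree forces $\mathbb{W}$ to be one of the summands in an orthogonal decomposition of $\parentSpace$, so $\mathbb{W}\subset\parentSpace$; applying the inductive hypothesis to the strictly shorter subpath from $\mathbb{W}$ to $\childSpace$ yields $\childSpace\subset\mathbb{W}$; transitivity of set inclusion finishes this direction.

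For the reverse direction, suppose $\childSpace\subset\parentSpace$. Since every non-root vertex has a unique parent and successive ancestors have strictly larger (finite) dimension, each vertex admits a unique finite upward path to the root $\rootSpace$. Hence the lowest common ancestor $\mathbb{U}$ of $\parentSpace$ and $\childSpace$ is well-defined, and I would do a case analysis on $\mathbb{U}$. If $\mathbb{U}=\parentSpace$, then $\childSpace$ lies in the subtree rooted at $\parentSpace$, proving the claim at once. If $\mathbb{U}=\childSpace$, then $\parentSpace$ is a descendant of $\childSpace$; the already-proved forward direction yields $\parentSpace\subset\childSpace$, which combined with the hypothesis forces $\parentSpace=\childSpace$ as subspaces (and hence as vertices, since vertices of a refinement tree are identified with their underlying subspaces), so the trivial path witnesses the descendant relation. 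The remaining case is $\mathbb{U}\neq\parentSpace,\childSpace$, in which $\parentSpace$ and $\childSpace$ descend from two distinct children $\mathbb{W}_1,\mathbb{W}_2\in\children_\tree(\mathbb{U})$. Applying the forward direction again gives $\parentSpace\subset\mathbb{W}_1$ and $\childSpace\subset\mathbb{W}_2$, while the sibling-orthogonality clause of property \ref{prop:childrenOrth} implies $\mathbb{W}_1\cap\mathbb{W}_2=\{\zero\}$. Chaining $\childSpace\subset\parentSpace\subset\mathbb{W}_1$ with $\childSpace\subset\mathbb{W}_2$ then forces $\childSpace=\{\zero\}$, contradicting the nontriviality requirement that every tree vertex inherits from being a member of its parent's orthogonal decomposition.

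The principal obstacle is this final case of the reverse direction, which is the only place where both the orthogonality of siblings and the nontriviality of orthogonal-decomposition members must be invoked jointly; the remaining cases amount to careful bookkeeping on the tree structure.
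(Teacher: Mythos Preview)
Your proof is correct and follows essentially the same approach as the paper: the forward direction by recursive application of property~\ref{prop:childrenOrth} along the descendant path, and the reverse direction by a lowest-common-ancestor case split, with the nontrivial case derailed by sibling orthogonality together with nontriviality of tree vertices. The only cosmetic difference is that the paper phrases the final contradiction as ``$\childSpace\perp\parentSpace$ and $\childSpace\subset\parentSpace$ with $\dim\childSpace\geq 1$'' whereas you route it through $\childSpace\subset\mathbb{W}_1\cap\mathbb{W}_2=\{\zero\}$; both are the same argument.
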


\begin{restatable}{proposition}{propdesctree} \label{desc_proposition}
For any $\parentSpace, \childSpace \in \treeNodes$, $\parentSpace$ is not descendant from $\childSpace$ and $\childSpace$ is not descendant from $\parentSpace$ iff $\childSpace \perp \parentSpace$.
\end{restatable}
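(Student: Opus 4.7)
My plan is to prove both directions by leveraging Proposition \ref{subset_proposition} together with the orthogonality built into property \ref{prop:childrenOrth} of the refinement tree, and the nontriviality of the subspaces that make up $\treeNodes$.

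For the forward direction, I would argue via the first common ancestor. Assume neither $\parentSpace$ nor $\childSpace$ is a descendant of the other. Because the root $\rootSpace$ is an ancestor of every vertex, a first common ancestor $\vecSpace{A} \in \treeNodes$ exists; by assumption $\vecSpace{A} \notin \{\parentSpace, \childSpace\}$. Then $\parentSpace$ and $\childSpace$ must descend from two \emph{distinct} children $\parentSpace', \childSpace' \in \children_\tree(\vecSpace{A})$, since otherwise one of those children would be a strictly earlier common ancestor. Property \ref{prop:childrenOrth} of an $\rootSpace$-refinement tree gives $\parentSpace' \perp \childSpace'$, and Proposition \ref{subset_proposition} gives $\parentSpace \subset \parentSpace'$ and $\childSpace \subset \childSpace'$. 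Orthogonality of $\parentSpace'$ and $\childSpace'$ then transfers immediately to orthogonality of $\parentSpace$ and $\childSpace$.

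For the backward direction, I would argue by contrapositive. Suppose, without loss of generality, that $\childSpace$ is a descendant of $\parentSpace$ (the other case is symmetric; the trivial case $\parentSpace = \childSpace$ is of course included). Proposition \ref{subset_proposition} yields $\childSpace \subset \parentSpace$. Since every vertex of a refinement tree is a \emph{nontrivial} subspace, there exists $0 \neq u \in \childSpace \subset \parentSpace$, and then $\langle u, u\rangle > 0$ contradicts $\parentSpace \perp \childSpace$. Hence orthogonality forces neither vertex to be a descendant of the other.

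The only subtle step is the forward direction, where I must confirm that the first common ancestor exists and that it is not $\parentSpace$ or $\childSpace$ themselves; this is exactly where the hypothesis ``neither is a descendant of the other'' is used. The backward direction is essentially a one-line consequence of the fact that tree vertices have positive dimension, so I do not anticipate any real obstacle there. The proof is thus a direct combination of Proposition \ref{subset_proposition}, the tree's orthogonality property, and the nontriviality of its vertices.
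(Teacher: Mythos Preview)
Your proof is correct and follows essentially the same approach as the paper's: the paper handles the forward direction by invoking the first-common-ancestor argument already carried out in the proof of Proposition~\ref{subset_proposition}, which is precisely what you spell out, and handles the backward direction using the same observation that tree vertices have dimension at least one so orthogonality precludes inclusion. The only difference is cosmetic---you unpack the common-ancestor step explicitly rather than pointing back to the earlier proof.
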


Furthermore, there exists a natural partial ordering on the vertex set
$\treeNodes$ of a $\rootSpace$-refinement tree $\tree$ given by the inclusion
relation $\subset$. Proposition \ref{subset_proposition} shows that the
inclusion relation $\subset$ is equivalent to the descendant relation in the
topology of the tree $\tree$.  This gives a notion of incomparability between
two spaces,

\begin{definition}[incomparability]\label{def:incomparability}
Two vector spaces $\parentSpace, \childSpace$ are \textbf{incomparable} if neither $\parentSpace \subset \childSpace$ nor $\childSpace \subset \parentSpace$.
\end{definition}

 \noindent The following corollary follows immediately from Propositions
 \ref{subset_proposition} and \ref{desc_proposition} and Definition
 \ref{def:incomparability}.

\begin{corollary} \label{incomparable_prop}
For two spaces $\parentSpace, \childSpace \in \treeNodes$, the following are equivalent:
\begin{enumerate}
\item $\parentSpace, \childSpace \in \treeNodes$ are incomparable.
\item $\parentSpace \perp \childSpace$.
\item $\parentSpace$ is not descendant from $\childSpace$ and $\childSpace$ is not descendant from $\parentSpace$.
\end{enumerate}
\end{corollary}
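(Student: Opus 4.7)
The plan is to obtain the triple equivalence by chaining the two cited propositions, since Propositions \ref{subset_proposition} and \ref{desc_proposition} have already done the heavy lifting and the corollary reduces to a bookkeeping argument on the definition of incomparability. I would proceed by proving (1) $\Leftrightarrow$ (3) and (2) $\Leftrightarrow$ (3) separately, then combining by transitivity.

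First I would verify (1) $\Leftrightarrow$ (3). Unpacking Definition \ref{def:incomparability}, statement (1) asserts that neither $\parentSpace \subset \childSpace$ nor $\childSpace \subset \parentSpace$ holds. Applying Proposition \ref{subset_proposition} symmetrically, the subset relation $\childSpace \subset \parentSpace$ is equivalent to $\childSpace$ being a descendant of $\parentSpace$, and similarly $\parentSpace \subset \childSpace$ is equivalent to $\parentSpace$ being a descendant of $\childSpace$. Negating both clauses yields exactly the compound statement in (3).

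Next, (2) $\Leftrightarrow$ (3) is verbatim the content of Proposition \ref{desc_proposition}, so no additional argument is needed. Chaining the two equivalences by transitivity produces (1) $\Leftrightarrow$ (2) $\Leftrightarrow$ (3), which is the claim.

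There is essentially no obstacle here, as the entire corollary is a direct consequence of results already proven; the only care required is to apply Proposition \ref{subset_proposition} to both orderings of the pair $(\parentSpace, \childSpace)$ when translating the incomparability condition into the ``neither descendant'' formulation.
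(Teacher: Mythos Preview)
Your proposal is correct and matches the paper's approach exactly: the paper simply states that the corollary follows immediately from Propositions \ref{subset_proposition} and \ref{desc_proposition} together with Definition \ref{def:incomparability}, and your argument spells out precisely that chain of equivalences.
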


\noindent The following corollary also follows immediately from Proposition
\ref{desc_proposition}.

\begin{corollary} \label{leaves_corollary}
The leaves of a 
	$\rootSpace$-refinement tree 
	form an incomparable set.
\end{corollary}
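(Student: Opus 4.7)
The plan is to reduce the statement directly to Proposition \ref{desc_proposition} (equivalently, Corollary \ref{incomparable_prop}) by observing that distinct leaves cannot lie in an ancestor/descendant relation with one another. Concretely, I would pick two distinct leaves $\leafSpace_1, \leafSpace_2 \in \treeLeaves$ and aim to show that they are incomparable in the sense of Definition \ref{def:incomparability}.

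The key observation is that a leaf has out-degree zero by definition, so it has no children, and hence no proper descendants in the tree. Therefore $\leafSpace_2$ cannot be a proper descendant of $\leafSpace_1$, and symmetrically $\leafSpace_1$ cannot be a proper descendant of $\leafSpace_2$. To rule out the trivial case $\leafSpace_1 = \leafSpace_2$, I invoke the assumption that the two leaves are distinct; alternatively, one may appeal to property \ref{prop:leavesDimOne} (each leaf has dimension one) together with distinctness to preclude equality.

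Having established that neither leaf is descendant from the other, Proposition \ref{desc_proposition} immediately yields $\leafSpace_1 \perp \leafSpace_2$. Then by the equivalence recorded in Corollary \ref{incomparable_prop}, this orthogonality is precisely the statement that $\leafSpace_1$ and $\leafSpace_2$ are incomparable. Since $\leafSpace_1, \leafSpace_2 \in \treeLeaves$ were arbitrary distinct leaves, the collection $\treeLeaves$ forms an incomparable set, completing the proof.

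There is no real obstacle here; the corollary is essentially a packaging of Proposition \ref{desc_proposition} applied to a pair of vertices whose lack of descendants is forced by the definition of a leaf. The only subtlety worth flagging explicitly in the write-up is the clarification of what is meant by ``descendant'' in this context (whether a vertex counts as a descendant of itself), which I would handle by noting that any two \emph{distinct} leaves are automatically non-descendants of one another.
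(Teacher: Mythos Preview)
Your proposal is correct and matches the paper's approach: the paper simply states that the corollary follows immediately from Proposition~\ref{desc_proposition}, and your argument unpacks exactly that implication. A minor streamlining: once you have established that neither leaf is descendant from the other, you can invoke the equivalence (1)~$\Leftrightarrow$~(3) in Corollary~\ref{incomparable_prop} directly to conclude incomparability, without the intermediate step through orthogonality.
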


\begin{remark}
Note that the leaves of a $\rootSpace$-refinement tree $\tree$ correspond to
	an orthogonal basis $\leafBasisCol_1, \ldots, \leafBasisCol_n$ for $\rootSpace $. This is
	because $\rootSpace$-refinement-tree Properties \ref{prop:rootV} and
	\ref{prop:childrenOrth} recursively imply that
\begin{equation} \label{leaf_decomp}
	\rootSpace = \sum_{\leafSpace \in \treeLeaves} \leafSpace \,.
\end{equation}
	Corollary \ref{leaves_corollary} tells us that these leaf spaces
	$\leafSpace$ are orthogonal, and Property \ref{prop:leavesDimOne} implies
	that each leaf space $\leafSpace$ corresponds to a single vector
	$\leafBasisCol_i$.
\end{remark}

\subsection{Frontiers}\label{sec:frontiers}

To use a refinement tree to sieve columns of the initial basis 
$\initBasis\in\stiefel{\stateVecSize \times \romSize_0}$, we must be able
to select a specific decomposition of $\mathbb{R}^\stateVecSize$ from an
$\mathbb{R}^\stateVecSize$-refinement tree.  To determine which
decompositions are admissible for given a refinement tree, we introduce the
notion of a `frontier'.

\begin{definition}[frontier]
An orthogonal decomposition $\decomp$ of $\rootSpace$ is a \textbf{frontier}
	of a $\rootSpace$-refinement tree $\tree \equiv (\treeNodes, \treeEdges)$ if
	$\decomp \subset \treeNodes$, i.e., all of the elements of $\decomp$
	correspond to vertices in the tree $\tree$.
\end{definition}

\begin{figure}[h]
\includegraphics[width=0.3\textwidth]{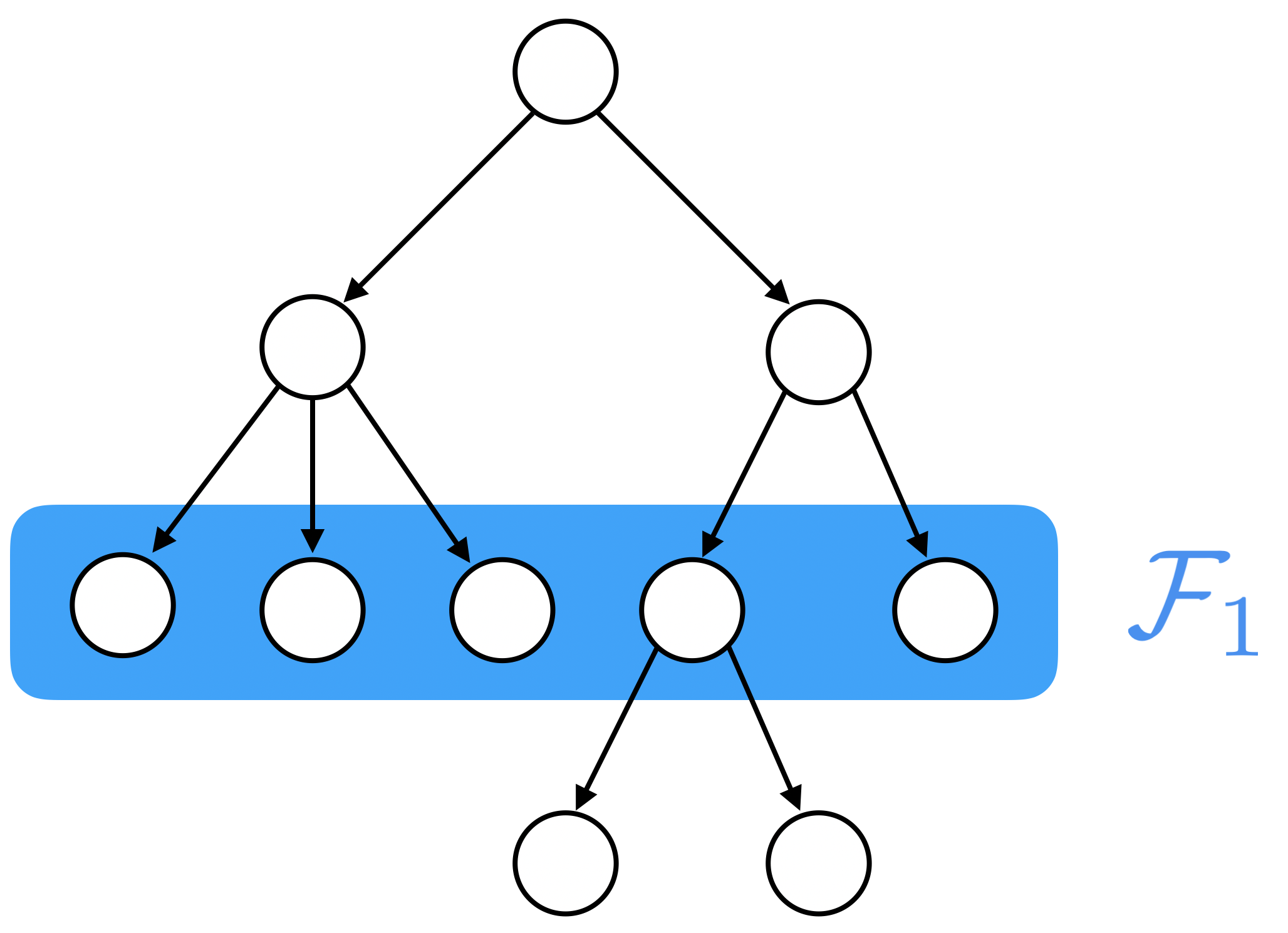}
\includegraphics[width=0.3\textwidth]{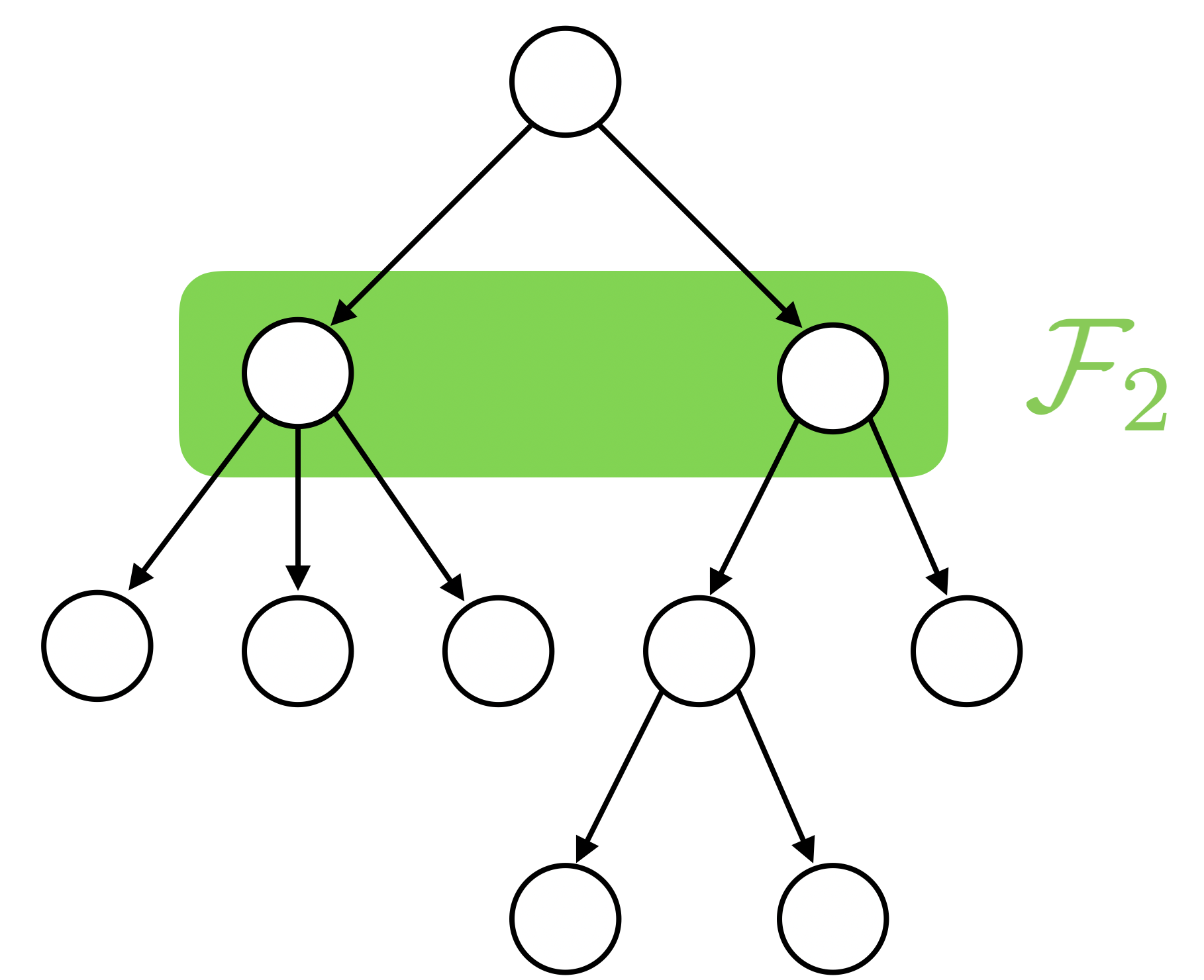}
\includegraphics[width=0.3\textwidth]{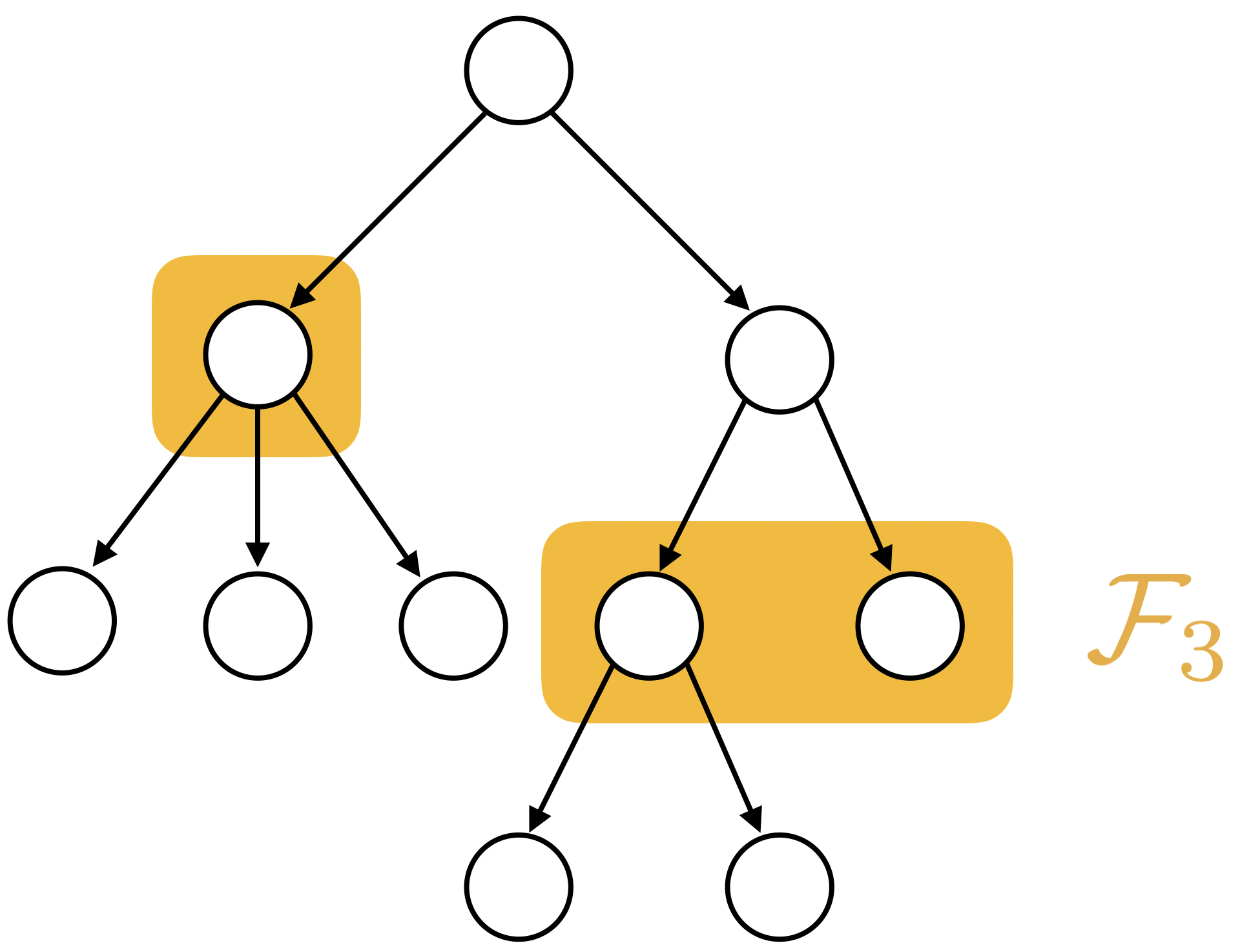}
\centering
\caption{An example of three different frontiers $\frontier_1, \frontier_2,
	\frontier_3$ in a single refinement tree.}
\label{fig:frontier_examples}
\end{figure}

To lend insight into the structure of frontiers, we provide a characterization
of frontiers in terms of incomparable vector spaces. In particular,
a frontier 
$\frontier$ of a refinement tree $\tree$ comprises
a subset of the vertices $\treeNodes$ in which every leaf is
descendant from exactly one vertex in $\frontier$. 
This can be seen explicitly
in fig.\ \ref{fig:frontier_examples}, which provides examples of
frontiers. To show this characterization rigorously, we use the following
proposition.

\begin{restatable}[characterization of frontiers]{proposition}{propcharfrontiers} \label{prop:character_frontiers}
Given a $\rootSpace$-refinement tree $\tree \equiv (\treeNodes, \treeEdges)$,
	$\frontier \subset \treeNodes$ is a frontier iff every leaf $\leafSpace\in\treeLeaves$ is descendant from exactly one space $\parentSpace \in \frontier$.
\end{restatable}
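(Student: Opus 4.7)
The plan is to prove the iff by handling each direction separately, in both cases leveraging Corollary \ref{incomparable_prop} (which ties the tree order $\subset$ to geometric orthogonality) and the fact, noted after Corollary \ref{leaves_corollary}, that $\rootSpace = \sum_{\leafSpace \in \treeLeaves}\leafSpace$.

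For the forward direction, assume $\frontier$ is a frontier. Fix a leaf $\leafSpace \in \treeLeaves$. \emph{Uniqueness} is the easy half: if $\leafSpace$ were a descendant of two distinct $\parentSpace_1, \parentSpace_2 \in \frontier$, then Proposition \ref{subset_proposition} gives $\leafSpace \subset \parentSpace_1 \cap \parentSpace_2$, but $\parentSpace_1 \perp \parentSpace_2$ forces $\parentSpace_1 \cap \parentSpace_2 = \{0\}$, contradicting $\dim(\leafSpace) = 1$. \emph{Existence} is the more delicate step. I would pick a nonzero $v \in \leafSpace$ and use the fact that $\leafSpace$ and every $\parentSpace \in \frontier$ are both vertices of $\tree$: by Corollary \ref{incomparable_prop}, each pair $(\leafSpace,\parentSpace)$ is either comparable (which, since $\dim(\leafSpace)=1$ and all vertices are nontrivial, means $\leafSpace \subset \parentSpace$) or orthogonal. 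If \emph{every} $\parentSpace \in \frontier$ fell in the orthogonal case, then $\proj_\parentSpace(v) = 0$ for all $\parentSpace$, and the frontier decomposition $v = \sum_{\parentSpace \in \frontier} \proj_\parentSpace(v)$ would give $v = 0$, a contradiction. Hence some $\parentSpace \in \frontier$ satisfies $\leafSpace \subset \parentSpace$, i.e., $\leafSpace$ is a descendant of $\parentSpace$ by Proposition \ref{subset_proposition}.

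For the backward direction, assume every leaf has a unique ancestor in $\frontier$. I need to verify the two defining properties of an orthogonal decomposition of $\rootSpace$. For pairwise orthogonality of $\frontier$: take distinct $\parentSpace_1, \parentSpace_2 \in \frontier$; if they were comparable, say $\parentSpace_1 \subset \parentSpace_2$, then any leaf descending from $\parentSpace_1$ (such leaves exist since every vertex of a rooted tree has at least one descendant leaf) would also descend from $\parentSpace_2$ via Proposition \ref{subset_proposition}, contradicting uniqueness. Thus they are incomparable, and Corollary \ref{incomparable_prop} yields $\parentSpace_1 \perp \parentSpace_2$. For $\rootSpace = \sum_{\parentSpace \in \frontier}\parentSpace$: the hypothesis gives $\leafSpace \subset \parentSpace$ for some $\parentSpace \in \frontier$ for every leaf, so $\sum_{\leafSpace \in \treeLeaves} \leafSpace \subset \sum_{\parentSpace \in \frontier}\parentSpace \subset \rootSpace$, and the left-hand side is already $\rootSpace$ by \eqref{leaf_decomp}, forcing equality.

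The main obstacle I anticipate is the existence step in the forward direction, because it would be tempting (but wrong) to argue purely by dimension counting or by walking up from $\leafSpace$ through its ancestors and hoping one lies in $\frontier$; nothing \emph{a priori} forces an ancestor chain to intersect $\frontier$. The clean route is the one above: reduce to the trichotomy (comparable one way, comparable the other, or orthogonal) guaranteed by Corollary \ref{incomparable_prop} on \emph{pairs of tree vertices}, use $\dim(\leafSpace)=1$ to collapse the trichotomy to a dichotomy, and then exploit the orthogonal decomposition $v = \sum_\parentSpace \proj_\parentSpace(v)$ to rule out uniform orthogonality. Everything else is essentially bookkeeping using results already established in the excerpt.
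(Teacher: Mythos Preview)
Your proof is correct and follows essentially the same route as the paper: both directions hinge on Corollary~\ref{incomparable_prop} and the leaf decomposition \eqref{leaf_decomp}, with the forward existence step handled by observing that a leaf orthogonal to every member of $\frontier$ would contradict $\frontier$ spanning $\rootSpace$. Your argument is in fact slightly more complete than the paper's, which omits the uniqueness half of the forward direction; your backward orthogonality argument (via a shared descendant leaf violating uniqueness) is also marginally more direct than the paper's version, which instead writes each $\parentSpace_i$ as the sum of its descendant leaves and argues orthogonality leaf-by-leaf.
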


To enable the comparison of two frontiers $\frontier_1$ and $\frontier_2$ such
that one can reason about which is `finer', we introduce a natural partial
ordering on the set of orthogonal decompositions of $\rootSpace$.

\begin{definition}[partial order $\preceq$]
For two orthogonal decompositions $\decomp_1, \decomp_2$ of $\rootSpace$, we write $\decomp_1 \preceq \decomp_2$ if for every vector space $\childSpace \in \decomp_1$, there is a vector space $\parentSpace \in \decomp_2$ such that $\childSpace \subset \parentSpace$. If this is the case, we say that $\decomp_1$ is \textbf{dominated} by $\decomp_2$.
\end{definition}

\begin{figure}[h]
\includegraphics[width=0.3\textwidth]{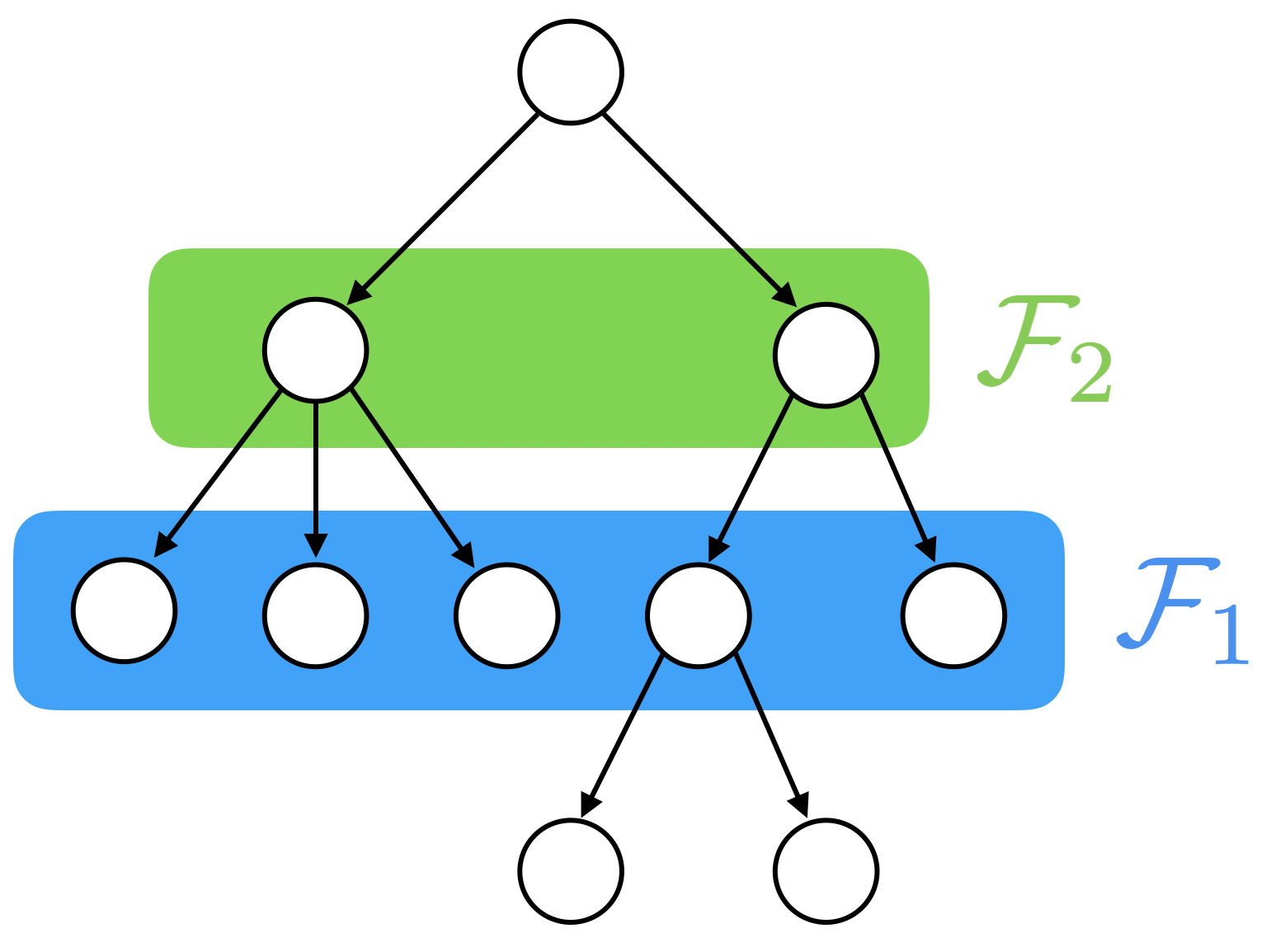}
\includegraphics[width=0.3\textwidth]{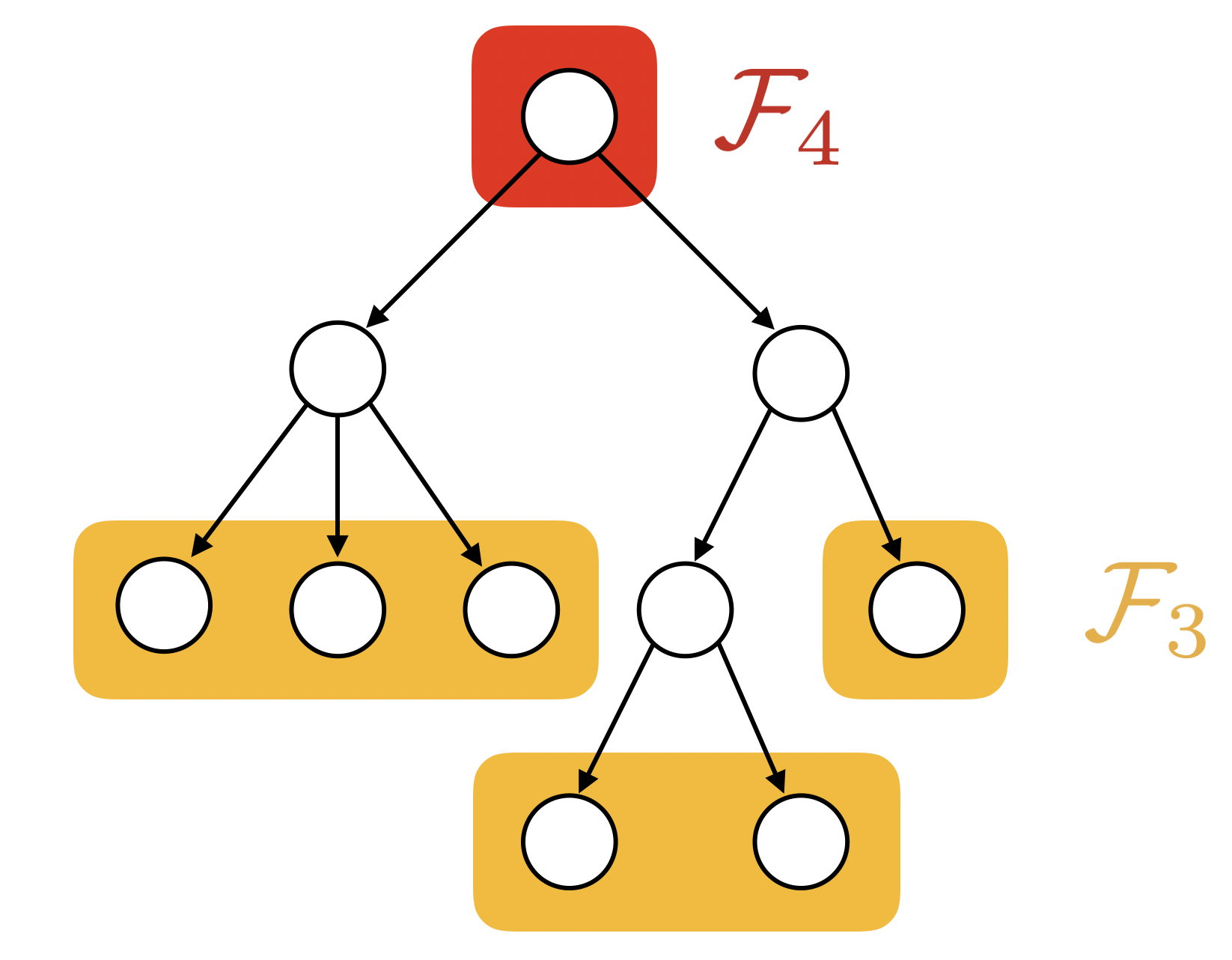}
\centering
\caption{Two examples of the binary relation $\preceq$ of frontiers, as
	$\frontier_1\preceq\frontier_2$ and $\frontier_3\preceq\frontier_4$.
	Indeed, all frontiers above are
	comparable with $\frontier_3 \preceq \frontier_1 \preceq \frontier_2 \preceq
	\frontier_4$.}
\end{figure}

This partial ordering extends to frontiers of a refinement tree $\tree$.
Intuitively, $\decomp_1 \preceq \decomp_2$ means that $\decomp_1$ is a finer
decomposition than $\decomp_2$ and that $\decomp_1$ can be obtained by
refining $\decomp_2$. This is borne out by the following proposition.

\begin{restatable}[ancestor map]{proposition}{propancestor} \label{prop:ancestor}
If $\decomp_1$ and $\decomp_2$ are orthogonal decompositions of $\rootSpace$ and
	$\decomp_1 \preceq \decomp_2$, then there exists a unique ancestor map
$\ancestor_{\decomp_1,\decomp_2} : \decomp_1 \longrightarrow \decomp_2$
	with the property that
	$\childSpace \subset \ancestor_{\decomp_1,\decomp_2}(\childSpace)$ for 
$\childSpace\in\decomp_1$. This map also has the
	property,
\begin{equation} \label{frontier_decomposition}
\parentSpace = \sum_{\childSpace \in \ancestor_{\decomp_1,\decomp_2}^{-1}(\parentSpace)} \childSpace, \qquad \forall \parentSpace \in \decomp_2 \,.
\end{equation}
\end{restatable}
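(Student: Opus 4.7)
The plan is to address the three claims of the proposition in turn: existence of $\ancestor_{\decomp_1,\decomp_2}$, its uniqueness, and the decomposition identity \eqref{frontier_decomposition}.

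For existence, I simply unfold the definition of $\preceq$. For each $\childSpace\in\decomp_1$, the relation $\decomp_1\preceq\decomp_2$ guarantees at least one $\parentSpace\in\decomp_2$ with $\childSpace\subset\parentSpace$; choosing such a $\parentSpace$ for every $\childSpace$ defines a map with the required containment property.

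For uniqueness, I would argue by contradiction using orthogonality and nontriviality. Suppose $\childSpace\subset\parentSpace_1$ and $\childSpace\subset\parentSpace_2$ for distinct $\parentSpace_1,\parentSpace_2\in\decomp_2$. Since $\decomp_2$ is an orthogonal decomposition, $\parentSpace_1\perp\parentSpace_2$, so any $v\in\childSpace$ satisfies $\langle v,v\rangle=0$, forcing $\childSpace=\{0\}$. This contradicts the requirement in the definition of orthogonal decomposition that each constituent subspace be nontrivial. Hence the parent space $\parentSpace$ containing any given $\childSpace$ is unique, and the map $\ancestor_{\decomp_1,\decomp_2}$ is well-defined.

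For the identity \eqref{frontier_decomposition}, fix $\parentSpace\in\decomp_2$ and let $A\defeq\sum_{\childSpace\in\ancestor_{\decomp_1,\decomp_2}^{-1}(\parentSpace)}\childSpace$. The inclusion $A\subset\parentSpace$ is immediate since each summand lies in $\parentSpace$ by construction of the ancestor map. The reverse inclusion is the main (though still mild) obstacle, and I would handle it by projecting. Take $v\in\parentSpace$; since $\decomp_1$ is an orthogonal decomposition of $\rootSpace$, write $v=\sum_{\childSpace\in\decomp_1}v_{\childSpace}$ with $v_{\childSpace}\in\childSpace$. For any $\childSpace\notin\ancestor_{\decomp_1,\decomp_2}^{-1}(\parentSpace)$, we have $\childSpace\subset\ancestor_{\decomp_1,\decomp_2}(\childSpace)$ and $\ancestor_{\decomp_1,\decomp_2}(\childSpace)\perp\parentSpace$ (since $\decomp_2$ is an orthogonal decomposition and the images differ), so $v_{\childSpace}\perp\parentSpace$ and thus $\proj_{\parentSpace}(v_{\childSpace})=0$. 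For $\childSpace\in\ancestor_{\decomp_1,\decomp_2}^{-1}(\parentSpace)$, we have $v_{\childSpace}\in\parentSpace$, so $\proj_{\parentSpace}(v_{\childSpace})=v_{\childSpace}$. Applying $\proj_{\parentSpace}$ to both sides of the decomposition of $v$ and using $\proj_{\parentSpace}(v)=v$ yields $v=\sum_{\childSpace\in\ancestor_{\decomp_1,\decomp_2}^{-1}(\parentSpace)}v_{\childSpace}\in A$, giving $\parentSpace\subset A$ and completing the proof.

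The trickiest step is the reverse inclusion; the rest is essentially bookkeeping. The key tool throughout is that orthogonality of the decomposition $\decomp_2$ combined with nontriviality of each subspace in $\decomp_1$ forces the ancestor assignment to be unambiguous and makes the projection argument clean.
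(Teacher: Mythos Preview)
Your proof is correct and matches the paper's approach for existence and uniqueness. For \eqref{frontier_decomposition}, the paper argues by contradiction via the orthogonal complement $\bigl(\sum_{\childSpace\in\ancestor^{-1}(\parentSpace)}\childSpace\bigr)^\perp\cap\parentSpace$, while your direct projection argument is a slight (and arguably cleaner) variant resting on the same key observation that any $\childSpace\in\decomp_1$ with $\ancestor(\childSpace)\neq\parentSpace$ is orthogonal to $\parentSpace$.
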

\noindent We supress the subscripts of the ancestor map when the associated
frontiers are obvious from context.
Fig.\ \ref{fig:ancestor} provides a visual example of an ancestor map.

\begin{figure}[h]
\includegraphics[width=0.35\textwidth]{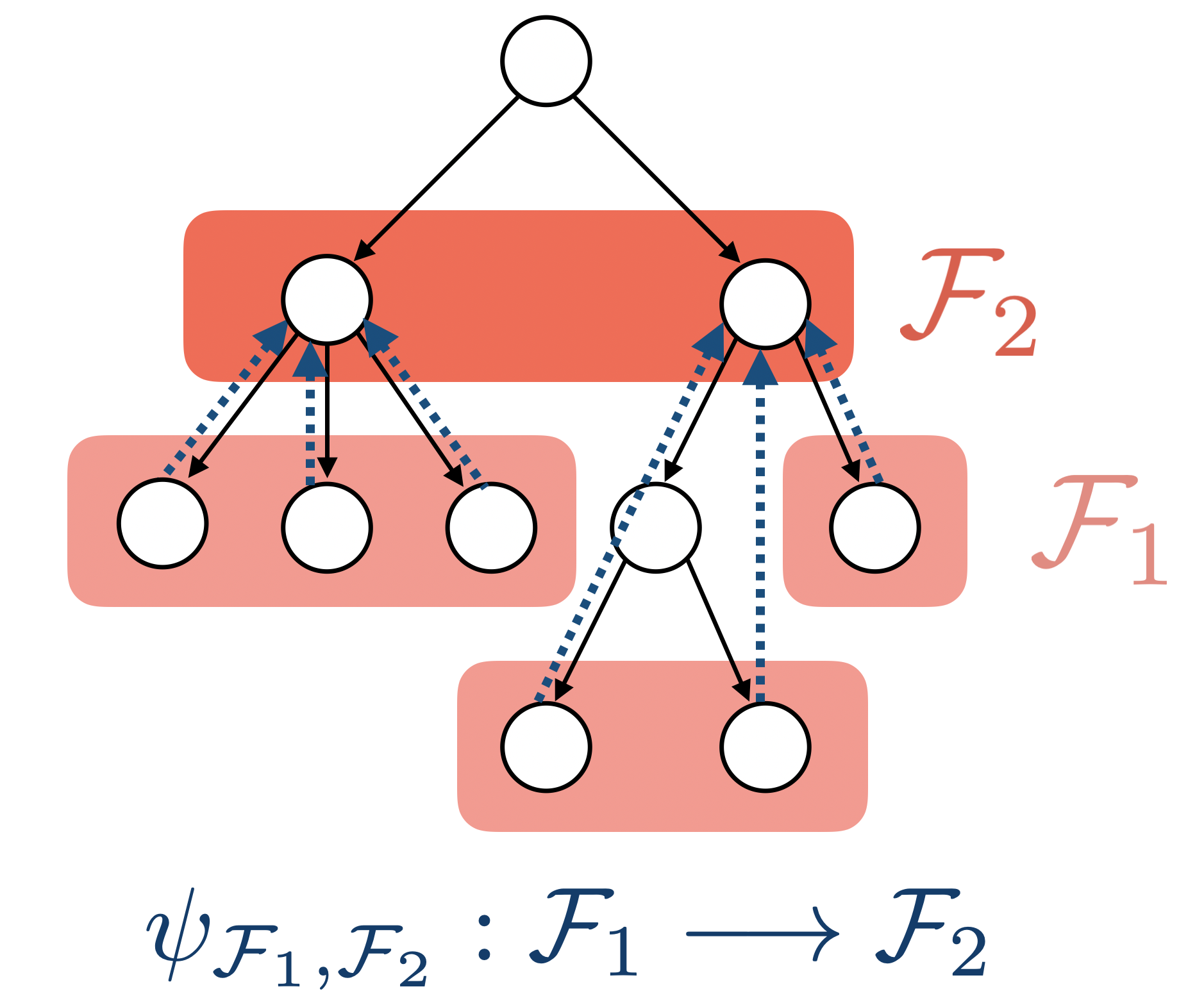}
\centering
	\caption{An example of an ancestor map $\ancestor_{\frontier_1,\frontier_2}
	$ (shown with dotted arrows) where
	$\frontier_1 \preceq \frontier_2$. The ancestor map associates each vertex
	in $\frontier_1$ with its unique ancestor in $\frontier_2$.}
\label{fig:ancestor}
\end{figure}

\begin{corollary} \label{cor:ancestor_vec}
If $\decomp_1 \preceq \decomp_2$ are orthogonal decompositions of $\rootSpace$
	and $\ancestor_{\decomp_1,\decomp_2}$ is the ancestor map between them, then for any vector $\phiVec \in \rootSpace$ and space $\parentSpace \in \decomp_2$,
\begin{equation}
\proj_{\parentSpace}(\phiVec) = \sum_{\childSpace \in
	\ancestor_{\decomp_1,\decomp_2}^{-1}(\parentSpace)}
	\proj_{\childSpace}(\phiVec) \,.
\end{equation}
\end{corollary}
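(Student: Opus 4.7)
The plan is to reduce the corollary to the standard fact that the orthogonal projection onto an orthogonal direct sum equals the sum of the orthogonal projections onto its summands. Concretely, fix $\parentSpace\in\decomp_2$ and let $S \equiv \ancestor_{\decomp_1,\decomp_2}^{-1}(\parentSpace)\subset\decomp_1$.

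First, I would invoke Proposition \ref{prop:ancestor} (specifically Eq.\ \eqref{frontier_decomposition}) to obtain
\begin{equation}
\parentSpace = \sum_{\childSpace \in S} \childSpace .
\end{equation}
Second, I would observe that because $\decomp_1$ is an \emph{orthogonal} decomposition of $\rootSpace$, any two distinct elements of $\decomp_1$ are orthogonal; in particular, the elements of $S \subset \decomp_1$ are pairwise orthogonal. Combined with the display above, this shows that $\parentSpace$ is the internal orthogonal direct sum of the subspaces in $S$.

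Third, for an internal orthogonal direct sum $\parentSpace = \bigoplus_{\childSpace\in S} \childSpace$, the orthogonal projector onto $\parentSpace$ satisfies
\begin{equation}
\proj_{\parentSpace} = \sum_{\childSpace\in S} \proj_{\childSpace},
\end{equation}
since for any $\phiVec\in\rootSpace$ the vector $\sum_{\childSpace\in S}\proj_{\childSpace}(\phiVec)$ lies in $\parentSpace$, and the residual $\phiVec - \sum_{\childSpace\in S}\proj_{\childSpace}(\phiVec)$ is orthogonal to every $\childSpace\in S$ (hence to their sum $\parentSpace$) by the defining property of each $\proj_{\childSpace}$ and the pairwise orthogonality of the $\childSpace$. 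Evaluating at $\phiVec$ yields the claimed identity.

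The only subtlety is confirming pairwise orthogonality of the elements of $S$; this is not obvious from $S\subset\parentSpace$ alone, but follows immediately from $S\subset\decomp_1$ and the orthogonality clause in the definition of an orthogonal decomposition. Once that observation is in hand, the remainder is a standard property of orthogonal projections and requires no additional machinery.
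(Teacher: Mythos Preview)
Your proposal is correct and matches the paper's intended approach: the paper states this result as an immediate corollary of Proposition~\ref{prop:ancestor} (Eq.~\eqref{frontier_decomposition}) without supplying a separate proof, and indeed later (Eq.~\eqref{eq:decomp_add}) it records exactly the projector identity $\projector_\parentSpace = \sum_{\childSpace\in\decomp}\projector_\childSpace$ for an orthogonal decomposition $\decomp$ of $\parentSpace$, which is precisely the standard fact you invoke.
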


\begin{corollary}\label{cor:ancestor_span}
If $\decomp_1 \preceq \decomp_2$ are orthogonal decompositions of $\rootSpace$, then
\begin{equation}
	\vecspan(\sieve{\phiVec}{\decomp_2}) \subset
	\vecspan(\sieve{\phiVec}{\decomp_1}) \,.
\end{equation}
\end{corollary}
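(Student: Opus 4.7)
The plan is to observe that Corollary \ref{cor:ancestor_span} follows almost immediately from Corollary \ref{cor:ancestor_vec}; the main task is simply to unpack the definitions and verify that every generator of the left-hand span can be written as a linear combination of generators of the right-hand span.

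First I would note that by the definition of a sieve, the set $\sieve{\phiVec}{\decomp_2}$ consists precisely of the vectors $\proj_{\parentSpace}(\phiVec)$ for $\parentSpace \in \decomp_2$, so $\vecspan(\sieve{\phiVec}{\decomp_2})$ is generated by these orthogonal projections. It therefore suffices to show that $\proj_{\parentSpace}(\phiVec) \in \vecspan(\sieve{\phiVec}{\decomp_1})$ for each $\parentSpace \in \decomp_2$.

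To establish this, I would invoke Corollary \ref{cor:ancestor_vec}, which, under the hypothesis $\decomp_1 \preceq \decomp_2$, yields the ancestor map $\ancestor_{\decomp_1,\decomp_2}\colon \decomp_1 \to \decomp_2$ and the identity
\begin{equation}
\proj_{\parentSpace}(\phiVec) = \sum_{\childSpace \in \ancestor_{\decomp_1,\decomp_2}^{-1}(\parentSpace)} \proj_{\childSpace}(\phiVec) \,.
\end{equation}
Each summand on the right is an element of $\sieve{\phiVec}{\decomp_1}$, since $\childSpace \in \decomp_1$. Hence $\proj_{\parentSpace}(\phiVec)$ is a finite linear combination (in fact, a sum) of elements of $\sieve{\phiVec}{\decomp_1}$, which proves $\proj_{\parentSpace}(\phiVec) \in \vecspan(\sieve{\phiVec}{\decomp_1})$. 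Taking spans over all $\parentSpace \in \decomp_2$ yields the desired inclusion.

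There is essentially no hard step here, since the heavy lifting---existence of the ancestor map and the decomposition in Eq.~\eqref{frontier_decomposition}---has already been done in Proposition \ref{prop:ancestor} and Corollary \ref{cor:ancestor_vec}. The only subtlety to be careful about is that $\ancestor_{\decomp_1,\decomp_2}^{-1}(\parentSpace)$ is nonempty for every $\parentSpace \in \decomp_2$; this follows from Eq.~\eqref{frontier_decomposition}, since otherwise the sum on the right would be the trivial subspace while $\parentSpace$ is nontrivial by the definition of an orthogonal decomposition.
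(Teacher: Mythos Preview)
Your proof is correct and matches the intended approach: the paper states this result as an immediate corollary of Corollary~\ref{cor:ancestor_vec} without giving an explicit argument, and your write-up is precisely the natural unpacking of that implication. The remark about nonemptiness of $\ancestor_{\decomp_1,\decomp_2}^{-1}(\parentSpace)$ is not strictly needed (an empty sum is zero, which lies in every span), but it does no harm.
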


\section{Algorithm schema} \label{sec:schema}

With mathematical preliminaries now established, we provide an overview of the
proposed refinement algorithm. Suppose we are given an initial basis
$\romBasis=\initBasis\equiv[\initBasisCol_1\ \cdots\ 
\initBasisCol_{\romSize_0}]\in\stiefel{\stateVecSize \times \romSize_0}$ as
well as
an
$\mathbb{R}^\stateVecSize$-refinement tree $\tree$. To
perform refinement, 
we maintain a frontier
$\frontier_i$ in the tree $\tree$
for each basis vector $\initBasisCol_i$,
$i=1,\ldots,\romSize_0$. Because the basis begins in its 
initial unrefined state, we initially set all frontiers $\frontier_i$ to the coarsest
possible value, namely the root-node state $\frontier_i \gets \{ \mathbb{R}^\stateVecSize \}$. 
Now, whenever the ROM solution 
$\trialBasisOffset + \romBasis
	\romStateVecIter(\param)$ is deemed to be an inaccurate approximation of the
	FOM solution 
$\stateVecIter(\param)$, the algorithm
performs basis refinement, which consists of first finding new (finer) frontiers
$\frontier_i'$, $i=1,\ldots,\initRomSize$ satisfying
\begin{equation} \label{eq:new_frontier}
\frontier_i' \preceq \frontier_i \,.
\end{equation}
Next, the algorithm sets $\frontier_i \gets \frontier_i'$ and sieves basis
vectors $\initBasisCol_i$ through 
refined frontier $\frontier_i$ for $i=1,\ldots,\romSize_0$ to arrive at a new
enriched basis 
\begin{equation}
\romBasis = \left[\begin{array}{cccc} \matsieve{\initBasisCol_1}{\frontier_1}
&  \ldots &
\matsieve{\initBasisCol_{\romSize_0}}{\frontier_{\romSize_0}}
\end{array}\right] \in \R{\nat{\stateVecSize} \times \frontierglobal} \,,
\end{equation}
where $\frontierglobal \equiv \bigsqcup_i \frontier_i$. If this new enriched
basis
remains insufficient, we can further refine the frontiers; this can proceed
recursively until the desired level of fidelity is achieved.
However, we must address three principal problems in
developing such an algorithm:

\begin{enumerate}
	\item\label{schema:refinement} \textit{Refinement of the frontiers $\frontier_i$ into $\frontier_i'$}.
	Ideally, the frontiers $\frontier_i'$ would balance the accuracy benefit of
		increased fidelity with the cost drawback of increased dimensionality.
		Moreover, the algorithm should determine both (1) the frontiers
		$\frontier_i$ to refine, and (2) the manner in which they should be
		refined. For example, in the case of a propagating shock, refinement of
		the frontiers $\frontier_i$ should be performed in the vicinity of the
		shock. To address this, we propose an approach that extends the
		dual-weighted-residual error indicator technique from the original
		$h$-refinement method \cite{carlberg2015adaptive} to the present 
		framework. These error indicators provide a heuristic guide for assessing which
		frontiers $\frontier_i$ offer the greatest refinement benefit in terms of
		minimizing the quantity-of-interest error.  Section
		\ref{sec:basis_refinement} presents this approach.
\item \textit{Construction of the $\mathbb{R}^\stateVecSize$-refinement tree
	$T$}. The refinement tree $\tree$ determines the hierarchical structure of
		the frontiers $\frontier_i$, and should be designed such that relatively few
		refinement steps are needed to enable the basis to accurately represent
		the FOM solution. There are a number of considerations one may want to
		take into account when constructing this tree. For example, it may be desirable
		to preserve spatial coherence in the refinement hierarchy of $\tree$, so
		that the vector spaces in $\treeNodes$ correspond to contiguous regions of
		the spatial domain; in this case, the refined basis will be sparse, which
		can improve computational efficiency. Alternatively, it may be desirable to
		preserve coherence in the frequency domain, or even a combination of the
		two.  In any case, the optimal tree $\tree$ is clearly highly problem dependent.
		Nonetheless, we provide a data-driven method for constructing this
		refinement
		tree, which is applicable to situations
		where no such problem-specific information is available
		besides collected snapshot data.  This technique comprises an extension of
		the recursive $k$-means clustering approach proposed in the original
		$h$-refinement work \cite{carlberg2015adaptive}.
		Section \ref{sec:tree_construction} presents this approach.
\item \textit{Compression of the refined basis $\romBasis$ when necessary}.
	Whenever the basis $\romBasis$ is refined in the above manner, the
		basis dimension increases. To prevent this dimension from increasing
		monotonically over time, we require an approach to control the
		basis dimension.
		The original 
		$h$-refinement work \cite{carlberg2015adaptive} simply \textit{reset}
		the basis $\romBasis$ to the initial basis $\initBasis$ after a prescribed
		number of time steps. In the present mathematical framework, this
		corresponds to simply resetting the frontiers $\frontier_i \gets \{
			\mathbb{R}^\stateVecSize \}$ periodically. However, this approach is undesirable for
		several reasons.  First, the fact that the initial basis $\initBasis$
		required refinement indicates that it is deficient; 
		resetting the basis simply reintroduces these deficiencies. Second,
		refining the basis provides valuable information about the particular
		deficiency of the original basis;  
		resetting the basis effectively discards this important information. To
		address these drawbacks, we propose to perform an online-efficient POD of 
		solution snapshots computed with the refined ROM (after projecting out
		solution components in the initial basis $\initBasis$), and subsequently
		append the resulting POD modes to the original basis $\initBasis$. Naively
		implemented, this approach incurs an $\stateVecSize$-dependent operation
		count.  However, we have developed an algorithm that employs the structure
		of the refinement tree $\tree$ to perform an efficient POD whose operation
		count depends only on the refined-ROM dimension $\romSize$. 
		Section \ref{sec:online_basis_compression} presents this algorithm.
\end{enumerate}

\section{Basis refinement} \label{sec:basis_refinement}

In this section, we present our approach for refining the frontiers
$\frontier_i$, which corresponds to component \ref{schema:refinement}  of the
algorithm schema in section \ref{sec:schema}. This requires additional
notation.  In particular, we must establish notation for 
canonical refinements of an arbitrary frontier $\frontier_i$.

\subsection{Frontier refinement}\label{sec:frontierRefine}

First, we define the process of decomposing a vector space $\parentSpace$ in a
refinement tree $\tree$.

\begin{definition}[$\tree$-refinement]
For a given $\rootSpace$-refinement tree $\tree \equiv (\treeNodes, \treeEdges)$,
	we denote $\tree$-\textbf{refinement} of a vector space $\parentSpace \in
	\treeNodes$ by $\tRefine{\parentSpace}{\tree}$ and define it as
\begin{equation}
\tRefine{\parentSpace}{\tree} \equiv \begin{cases}
\children_\tree(\parentSpace) & \children_\tree(\parentSpace) \neq \emptyset
\\ \{ \parentSpace \} & \text{otherwise.}\end{cases}
\end{equation}
Recall that $\children_\tree(\parentSpace)$ denotes the children of
	$\parentSpace$ in tree $\tree$.
\end{definition}

The $\tree$-refinement of $\parentSpace$ corresponds to the decomposition of
$\parentSpace$ given by the children of $\parentSpace$ in the tree $\tree$,
unless $\parentSpace$ is a leaf of the tree, in which case the decomposition
of $\parentSpace$ is given by itself. Now, for any given frontier
$\frontier$ in a $\tree$-refinement tree, there is a natural notion of the
`next level' of refinement. We can simply take the refinement of every
subspace in $\frontier$.

\begin{definition}[full refinement]
The \textbf{full refinement} $\refine{\frontier}$ of a frontier $\frontier$ is
	given by the $\tree$-refinement of all spaces in $\frontier$, i.e.,
\begin{equation}
\refine{\frontier} \equiv \bigcup_{\parentSpace \in \frontier} \tRefine{\parentSpace}{\tree} \,.
\end{equation}
\end{definition}

\begin{figure}[h]
\includegraphics[width=0.3\textwidth]{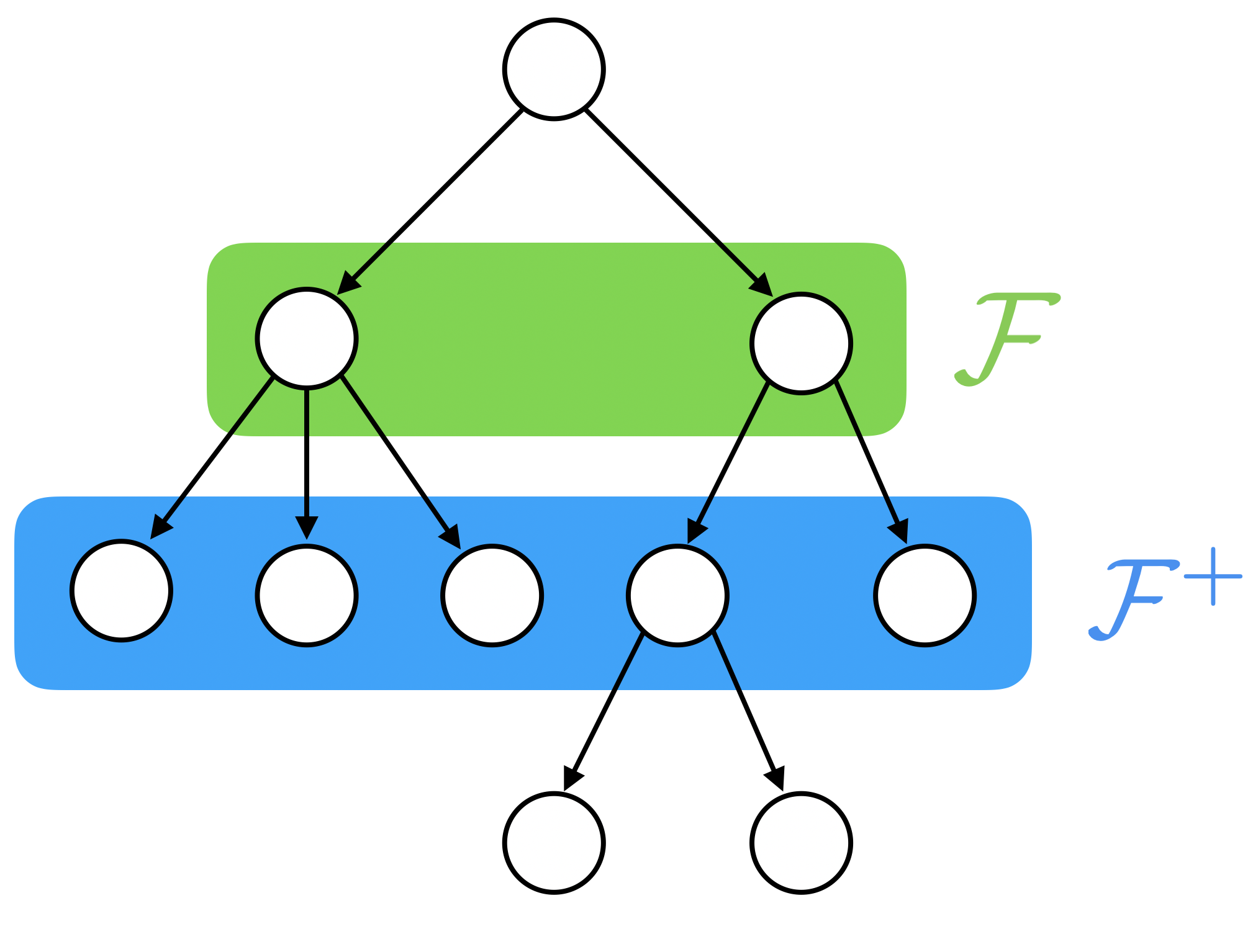}
\centering
\caption{An example of a full refinement $\refine{\frontier}$ of a frontier
	$\frontier$.}
\end{figure}

\begin{remark}
Proposition (\ref{subset_proposition}) implies that $\frontier^+ \preceq \frontier$.
\end{remark}

Thus, there is always a simple way to perform refinement of a frontier
$\frontier$: by taking the full refinement $\refine{\frontier}$.
However, such a strategy is aggressive; in practice, we
aim to consider more tailored refinements of the frontier $\frontier$.
To achieve this, we note that rather than refining every vector space in
$\frontier$, we can refine a subset of these vector spaces.
This leads to the definition of a partial refinement of
a frontier $\frontier$.

\begin{definition}[partial refinement]
The \textbf{partial refinement} of a frontier $\frontier$ of a
	$\rootSpace$-refinement tree $\tree$ at vector spaces $\parentSpace_1,
	\ldots, \parentSpace_m \in \frontier$ is given by
\begin{equation}
\pRefine{\frontier}{\parentSpace_1, \ldots, \parentSpace_m}{\tree} \equiv
	\left(\frontier \setminus \left\{\parentSpace_1, \ldots,
	\parentSpace_m\right\}\right) \cup \left(\bigcup_{i=1}^m \tRefine{\parentSpace_i}{\tree} \right) \,.
\end{equation}
In the absence of a refinement tree, the \textbf{partial refinement} of any
	orthogonal decomposition $\decomp$ of
	$\rootSpace$ using orthogonal decompositions $\decomp_1, \ldots, \decomp_m$
	of vector spaces $\parentSpace_1, \ldots, \parentSpace_m \in \decomp$ is given by
\begin{equation}
\pRefineD{\decomp}{\decomp_1, \ldots, \decomp_m} \equiv \left( \decomp
	\setminus \left\{\parentSpace_1, \ldots, \parentSpace_m \right\} \right)
	\cup \left(\bigcup_{i=1}^m \decomp_i\right) \,.
\end{equation}
\end{definition}

\begin{remark}\label{rem:dom}
Note that refinements are always dominated by the original decompositions from
	which they were refined, i.e.,
\begin{align}
\pRefine{\frontier}{\parentSpace_1, \ldots, \parentSpace_m}{\tree} &\preceq \frontier\,,\\
\pRefineD{\decomp}{\decomp_1, \ldots \decomp_m} &\preceq \decomp \,.
\end{align}
\end{remark}

\begin{remark} \label{re:refinement_still_frontier}
The partial refinement $\pRefine{\frontier}{\parentSpace_1, \ldots,
	\parentSpace_m}{\tree}$ of a frontier $\frontier$ at vector spaces
	$\parentSpace_1, \ldots, \parentSpace_m\in\frontier$ is also a frontier. Moreover, if
	$\frontier_1, \ldots, \frontier_m$ are frontiers of the subtrees of $\tree$
	rooted at $\parentSpace_1, \ldots, \parentSpace_m \in \frontier$, respectively, then the partial refinement $\pRefineD{\frontier}{\frontier_1, \ldots, \frontier_m}$ is also a frontier.
\end{remark}

\subsection{Dual-weighted-residual error indicators} \label{sec:dwrei}

The workhorse of the refinement portion of our algorithm is the goal-oriented
dual-weighted-residual error-indicator approach from the original
$h$-refinement method \cite{carlberg2015adaptive}. This approach ascribes an
error indicator to every element of a frontier $\frontier$, thereby enabling
the method to refine only the elements of the frontier $\frontier$ associated
with the largest approximated errors.

We begin by assuming the context of section \ref{sec:schema}, i.e., we are
given an initial basis
$\romBasis=\initBasis\equiv[\initBasisCol_1\ \cdots\ 
\initBasisCol_{\romSize_0}]\in\stiefel{\stateVecSize \times \romSize_0}$
and an $\mathbb{R}^\stateVecSize$-refinement tree $\tree$. The current
`coarse' basis is given 
by the sieve of the basis vectors $\initBasisCol_1,
\ldots, \initBasisCol_{\romSize_0}$ through frontiers $\frontier_1, \ldots,
\frontier_{\romSize_0}$, i.e.,
\begin{equation} \label{eq:coarse}
\romBasisC = \left[\begin{array}{ccc} \matsieve{\initBasisCol_1}{\frontier_1}
 & \cdots &
\matsieve{\initBasisCol_{\romSize_0}}{\frontier_{\romSize_0}}
\end{array}\right] \in \R{\nat{\stateVecSize} \times \frontierglobalC} \,,
\end{equation}
where $\frontierglobalC \equiv \bigsqcup_i \frontier_i$. If the coarse
basis $\romBasisC$ is deficient, we would like enrich the
basis $\romBasisC$ by refining the frontiers $\frontier_1, \ldots,
\frontier_{\romSize_0}$. A naive way to perform this refinement would
be simply to apply full refinement to each frontier
$\frontier_1, \ldots, \frontier_{\romSize_0}$, i.e.,
\begin{equation}
\romBasisF \equiv \left[\begin{array}{ccc}
\matsieve{\initBasisCol_1}{\refine{\frontier_1}} &
\cdots &
\matsieve{\initBasisCol_{\romSize_0}}{\refine{\frontier_{\romSize_0}}}
\end{array}\right] \in \R{\nat{\stateVecSize} \times \frontierglobalF} \,,
\end{equation}
where $\frontierglobalF \equiv \bigsqcup_i \refine{\frontier_i}$ and $\romBasisF$
denotes the full refinement of the previous basis $\romBasisC$. This
aggressive approach is tantamount to performing \textit{uniform} refinement. Instead,
we aim to devise an \textit{adaptive} approach that performs refinement only on
basis vectors contributing most to the quantity-of-interest error.

For each frontier $\frontier_i$ in Eq.\ (\ref{eq:coarse}) and its full
refinement $\refine{\frontier_i}$, there exists an ancestor map 
\begin{equation}
\ancestor_i\equiv\ancestor_{i,\refine{\frontier_i},\frontier_i} : \refine{\frontier_i} \longrightarrow \frontier_i \,.
\end{equation}
These ancestor maps together induce a global ancestor map from $\frontierglobalF
\equiv \bigsqcup_i \refine{\frontier_i}$ to $\frontierglobalC \equiv \bigsqcup_i
\frontier_i$, i.e.,
\begin{equation} \label{eq:global_ancestor_map}
\ancestor\equiv
	\ancestor_{\frontierglobalF,\frontierglobalC}
	: \frontierglobalF \longrightarrow \frontierglobalC 
\end{equation}
such that $\left.\ancestor\right|_{\refine{\frontier_i}} = \ancestor_i$. The
indicator matrix 
$\prolong \in \mathbb{R}^{\frontierglobalF \times \frontierglobalC}$
for this global ancestor map has entries
\begin{equation} \label{eq:prolong_def}
\left(\prolong\right)_{\childSpace, \parentSpace} \equiv \begin{cases} 
      1 & \ancestor(\childSpace) = \parentSpace \\
      0 & \text{otherwise.} 
   \end{cases}
\end{equation}
In analogue to
the prolongation
operator from $h$-refinement for finite elements, 
we refer to the matrix $\prolong$
as the \textbf{prolongation operator} from the coarse basis $\romBasisC$ to the
fine basis $\romBasisF$. Indeed, 
corollary (\ref{cor:ancestor_vec}) implies
\begin{equation} \label{eq:prolong}
\romBasisC = \romBasisF \prolong \,.
\end{equation}
The prolongation operator relates coordinate representations in the coarse
basis $\romBasisC$ to coordinate representations in the fine basis
$\romBasisF$. Indeed, if we have a coordinate representation $\romStateVecC$
of data $\romBasisC \romStateVecC$ in the coarse basis $\romBasisC$, then
\begin{equation}
\romBasisC \romStateVecC = (\romBasisF \prolong) \romStateVecC = \romBasisF (\prolong \romStateVecC) \,.
\end{equation}
Hence, $\prolong \romStateVecC$ provides the coordinate representation of this
data in the fine basis $\romBasisF$.

Now, consider the context of refinement. Suppose we have computed a
(coarse) ROM solution $\romBasisC \romStateVecC$ satisfying
\begin{equation} \label{eq:coarse_def}
\left(\romBasisC\right)^T \gResidual\left(\romBasisC \romStateVecC\right) =
	\zero \,.
\end{equation}
If we perform uniform refinement of all frontiers
and solve the ROM corresponding to the resulting fine basis
$\romBasisF$, we obtain a higher fidelity
ROM solution $\romBasisF \romStateVecF$ satisfying
\begin{equation} \label{eq:fine_def}
\left(\romBasisF\right)^T \gResidual \left(\romBasisF \romStateVecF\right) =
	\zero \,.
\end{equation}
However, we would like to avoid computations that scale with the dimension of
the fully refined basis.

To achieve this---yet still glean information about the unknown refined
solution $\romBasisF \romStateVecF$---we apply dual-weighted-residual error
estimation. We begin by assuming the residual
$\gResidual$ is twice continuously differentiable and approximate it using 
a first-order Taylor-series expansion about the
known coarse solution $\romBasisC \romStateVecC$, i.e.,
\begin{equation}
\gResidual\left(\romBasisF \romStateVecF\right) = \gResidual(\romBasisC
	\romStateVecC) + \frac{\partial \gResidual}{\partial \stateVec}
	\left(\romBasisC \romStateVecC\right) \romBasisF \left(\romStateVecF -
	\prolong \romStateVecC\right) +O(\|
	\romStateVecF -
	\prolong \romStateVecC
	\|^2)\,,
\end{equation}
where we have used Eq.\ (\ref{eq:prolong}) to relate the coarse and refined
bases. Left multiplying the above by $\romBasisF$ and using
Eq.\ (\ref{eq:fine_def}) yields
\begin{equation}
\zero = (\romBasisF)^T \gResidual\left(\romBasisF \romStateVecF\right) = (\romBasisF)^T \gResidual(\romBasisC \romStateVecC) + (\romBasisF)^T \frac{\partial \gResidual}{\partial \stateVec} \left(\romBasisC \romStateVecC\right) \romBasisF \left(\romStateVecF - \prolong \romStateVecC\right) 
	+O(\|
	\romStateVecF -
	\prolong \romStateVecC
	\|^2)
	\,.
\end{equation}
Solving for the error $\romStateVecF - \prolong \romStateVecC$ gives the Newton approximation
\begin{equation} \label{eq:newton}
\left(\romStateVecF - \prolong \romStateVecC\right) = - \left[(\romBasisF)^T\frac{\partial \gResidual}{\partial \stateVec} \left(\romBasisC \romStateVecC\right) \romBasisF \right]^{-1} \left(\romBasisF\right)^T\gResidual(\romBasisC \romStateVecC)
+O(\|
	\romStateVecF -
	\prolong \romStateVecC
	\|^2)\,.
\end{equation}
Unfortunately, computing this Newton approximation requires a 
$\frontierglobalF \times \frontierglobalF$ linear-system solve; this is precisely what we
aim to avoid. Thus, we instead consider the dual. As above,
we assume the quantity-of-interest functional 
$\interestFunc$ 
(see Eq.\
(\ref{interest_fom}))
is twice continuously differentiable and
perform a Taylor
expansion about the coarse solution $\romStateVecC$, i.e.,
\begin{equation} \label{eq:interest_taylor}
\interestFunc(\romBasisF \romStateVecF) = \interestFunc(\romBasisC \romStateVecC) +\frac{\partial \interestFunc}{\partial \stateVec} \left(\romBasisC \romStateVecC\right) \romBasisF \left(\romStateVecF - \prolong \romStateVecC\right) 
	+O(\|
	\romStateVecF -
	\prolong \romStateVecC
	\|^2)
	\,.
\end{equation}
Substituting Eq.\ (\ref{eq:newton}) into Eq.\ (\ref{eq:interest_taylor}) yields 
\begin{equation} \label{eq:interest_func_adjoint}
\interestFunc(\romBasisF \romStateVecF) - \interestFunc(\romBasisC
	\romStateVecC) = - \left(\adjointF\right)^T (\romBasisF)^T \gResidual(\romBasisC \romStateVecC) 
	+O(\|
	\romStateVecF -
	\prolong \romStateVecC
	\|^2)
	\,,
\end{equation}
where $\adjointF \in \mathbb{R}^{\frontierglobalF}$ is the fine adjoint satisfying
\begin{equation} \label{eq:fine_adjoint}
\left[(\romBasisF)^T \frac{\partial \gResidual}{\partial \stateVec}
	(\romBasisC \romStateVecC)^T \romBasisF\right]^T \adjointF = (\romBasisF)^T \frac{\partial \interestFunc}{\partial \stateVec} \left(\romBasisC \romStateVecC\right)^T \,.
\end{equation}
It may not seem that we have made any progress, as computing the
adjoint $\adjointF$ in satisfying (\ref{eq:fine_adjoint}) still requires a
$\frontierglobalF \times \frontierglobalF$ linear-system solve. However, the advantage of
adopting this viewpoint is that there is a natural way to approximate the
adjoint $\adjointF$ in an efficient manner, namely as the prolongation of 
the coarse adjoint $\adjointC\in \mathbb{R}^{\frontierglobalC}$, i.e.,
\begin{equation} \label{eq:prolong_adjoint}
\prolongadjoint \equiv \prolong \adjointC \,.
\end{equation}
where the coarse adjoint $\adjointC$ satisfies
\begin{equation} \label{eq:coarse_adjoint}
\left[(\romBasisC)^T \frac{\partial \gResidual}{\partial \stateVec}
	(\romBasisC \romStateVecC)^T \romBasisC \right]^T \adjointC = (\romBasisC)^T
	\frac{\partial \interestFunc}{\partial \stateVec} \left(\romBasisC
	\romStateVecC\right)^T \,.
\end{equation}
Critically, computing the coarse adjoint $\adjointC$ requires only a
$\frontierglobalC \times \frontierglobalC$ linear-system solve. Replacing the 
fine adjoint $\adjointF$ with its approximation 
$\prolongadjoint$ in 
Eq.\
(\ref{eq:interest_func_adjoint}) yields
\begin{equation}\label{eq:taylorApprox}
\interestFunc\left(\romBasisF \romStateVecF \right) - \interestFunc\left(\romBasisC \romStateVecC \right) \approx - \left(\prolongadjoint\right)^T (\romBasisF)^T \gResidual\left(\romBasisC \romStateVecC\right) \,.
\end{equation}
Finally, we can bound the right hand side of Eq.\ \eqref{eq:taylorApprox} by
\begin{equation}
\left| \left(\prolongadjoint\right)^T (\romBasisF)^T \gResidual
	\left(\romBasisC \romStateVecC \right) \right| \leq \sum_{\childSpace \in
	\frontierglobalF} \errorIndF_\childSpace \,.
\end{equation}
Here, the error indicators $\errorIndF_\childSpace \in \mathbb{R}_{\geq 0}$
for $\childSpace \in \frontierglobalF$ are the absolute values of the summands in
the inner product $\left(\prolongadjoint\right)^T \left[(\romBasisF)^T
\gResidual\left(\romBasisC \romStateVecC\right)\right]$, i.e.,
\begin{equation} \label{eq:error_ind}
\errorIndF_\childSpace \equiv \left|\left[\prolongadjoint\right]_\childSpace \left(\romBasisCol_\childSpace^h\right)^T \gResidual\left(\romBasisC \romStateVecC\right)\right| \,,
\end{equation}
where $\romBasisCol_\childSpace^h$ denotes the $\childSpace$-column of
$\romBasisF$. These error indicators ascribe an approximate error
heuristic to every element in the full refinement $\frontierglobalF$. Moreover,
these error indicators can be pulled back to the global coarse frontier
$\frontierglobalC$ via the global ancestor map $\ancestor$, i.e.,
\begin{equation} \label{eq:splitCoarseErr}
\errorIndC_{\parentSpace} \equiv \sum_{\childSpace \in
	\psi^{-1}(\parentSpace)} \errorIndF_{\childSpace}, \qquad \parentSpace \in
	\frontierglobalC \,.
\end{equation}
That is, the error indicator for $\parentSpace \in \frontierglobalC$ comprises the
sum of error indicators of its children. In matrix form, this corresponds to
\begin{equation} \label{eq:coarse_err_prolong} \errorIndC = \errorIndF
	\prolong \,.  \end{equation} The key to our refinement algorithm is to
	refine only the spaces $\parentSpace \in \frontierglobalC$ for which the
	corresponding error indicator $\errorIndC_{\parentSpace}$ is large. In our
	implementation, we refine those spaces $\parentSpace \in \frontierglobalC$ such
	that the corresponding error indicator is greater than the average of all
	error indicators. Algorithm \ref{alg:err} provides the full procedure for
	computing these error indicators.

\begin{algorithm}
\caption{Computation of Error Indicators} \label{alg:err}
\hspace*{\algorithmicindent} \textbf{Input}: The current coarse basis $\romBasisC$, the current frontiers $\frontier_1, \ldots, \frontier_{\romSize_0}$ \\
\hspace*{\algorithmicindent} \textbf{Output}: The fine error indicators $\errorIndF$.
\begin{algorithmic}[1]
\Procedure{ComputeErrorIndicators}{$\romBasisC$, $\frontier_1, \ldots, \frontier_{\initRomSize}$}
\State $\frontierglobalC \gets \bigsqcup_i \frontier_i$
\State $\frontierglobalF \gets \bigsqcup_i \refine{\frontier}_i$
\State $\prolong \gets \textsc{ComputeProlongationOperator}\left(\frontierglobalC, \frontierglobalF\right)$ \Comment{Compute prolongation operator using Eq.\ (\ref{eq:prolong_def})}
	\State\label{step:coarseAdjoint} $\adjointC \gets \left[(\romBasisC)^T \frac{\partial \gResidual}{\partial \stateVec} (\romBasisC \romStateVecC)^T \romBasisC \right]^{-T} \left[ (\romBasisC)^T \frac{\partial \interestFunc}{\partial \stateVec} \left(\romBasisC \romStateVecC\right)^T \right]$ \Comment{Compute the coarse adjoint using Eq.\ (\ref{eq:coarse_adjoint})}
\State $\prolongadjoint \gets \prolong \adjointC$ \Comment{Prolongate coarse adjoint to fine coordinate space.}
\State $\errorIndF \gets \ve{0} \in \R{\frontierglobalF}$
\For{$\childSpace \in \frontierglobalF\setminus\treeLeaves$} 
	\State \label{step:errorIndicators}$\errorIndF_\childSpace \gets  \left|\left[\prolongadjoint\right]_\childSpace \left(\romBasisCol_\childSpace^h\right)^T \gResidual\left(\romBasisC \romStateVecC\right)\right| $\Comment{Compute the error indicators using Eq.\ (\ref{eq:error_ind})}
\EndFor
\State \Return $\errorIndF$
\EndProcedure
\end{algorithmic}
\end{algorithm}

\subsection{Refinement algorithm}\label{sec:refineAlg}

With the preliminaries of frontier refinement and dual-weighted-residual error
indicators now established, we return to the objective of this work: adaptive
basis refinement.

Algorithms
\ref{alg:refine_frontiers} and \ref{alg:refine_proc}
report
the proposed refinement algorithm, which takes the following
approach: at a given time instance, the method first solves the ROM equations
to within a prescribed tolerance $\romTol$. Next, an error indicator is
applied to the ROM solution to assess its accuracy; here, we take the error
indicator to be the norm of the FOM residual evaluated at the ROM solution. If
this error indicator is larger than a prescribed tolerance $\tol$, then the
algorithm performs basis refinement. This is repeated until either the ROM
solution satisfies the error-indicator tolerance, or the ROM has
converged to the FOM.

\begin{algorithm}
\caption{Computation of Refined Frontiers} \label{alg:refine_frontiers}
\hspace*{\algorithmicindent} \textbf{Input}: The $\Rn$-refinement tree $\tree$, the fine error indicators $\errorIndF$, the current frontiers $\frontier_1, \ldots, \frontier_{\romSize_0}$.  \\
\hspace*{\algorithmicindent} \textbf{Output}: A new set of frontiers $\frontier_1', \ldots, \frontier_{\initRomSize}'$ refined according to the input error indicators.
\begin{algorithmic}[1]
\Procedure{RefineFrontiers}{$\tree, \errorIndF, \frontier_1, \ldots, \frontier_{\initRomSize}$}
\State $\frontierglobalC \gets \bigsqcup_i \frontier_i$
\State $\frontierglobalF \gets \bigsqcup_i \refine{\frontier}_i$
\State $\ancestor \gets \textsc{GetGlobalAncestorMap}\left(\frontierglobalF, \frontierglobalC\right)$ \Comment{Compute the map in Eq.\ (\ref{eq:global_ancestor_map}) sending every space to its ancestor.}
\State $\prolong \gets \textsc{ComputeProlongationOperator}\left(\frontierglobalC, \frontierglobalF\right)$ \Comment{Compute prolongation operator using Eq.\ (\ref{eq:prolong_def}).}
\State $\errorIndC = \errorIndF \prolong$ \Comment{Compute coarse error indicators using Eq.\ (\ref{eq:coarse_err_prolong}).}
\State $\eta \gets \frac{1}{|\frontierglobalC|} \sum_{\parentSpace \in \frontierglobalC} \errorIndC_\parentSpace$ \Comment{Compute the average of the coarse error indicators.}
\State $S \gets \{ \parentSpace \in \frontierglobalC \mid \errorIndC_\parentSpace \geq \eta \}$ \Comment{Select the spaces in $\frontierglobalC$ whose coarse error indicator is greater than average.}
	\For{$i \in \nat{\initRomSize}$} \Comment{For each frontier $\frontier_i$}
	\State $S_i \gets \frontier_i \cap S$ \Comment{Extract the elements of $S$
	that came from $\frontier_i$.}
	\State $\frontier_i' \gets \pRefine{\frontier}{S_i}{\tree}$ \Comment{Refine the frontier $\frontier_i$ at these spaces.}
\EndFor
\State \Return $(\frontier_1', \ldots, \frontier_{\initRomSize}')$ \Comment{Return the refined frontiers.}
\EndProcedure
\end{algorithmic}
\end{algorithm}

\begin{algorithm}
\caption{Refinement Algorithm} \label{alg:refine_proc}
\hspace*{\algorithmicindent} \textbf{Input}: $\Rn$-refinement tree
	$\tree$, initial basis $\initBasis$, current frontiers $\frontier_1, \ldots, \frontier_{\initRomSize}$,
	reference solution $\trialBasisOffset$, residual function $\gResidual$,
	ROM-residual tolerance $\romTol$, and FOM-residual 
	tolerance $\tol$. \\
\hspace*{\algorithmicindent} \textbf{Output}: A new set of frontiers $\frontier_1', \ldots, \frontier_{\initRomSize}'$ refined according to the input error indicators.
\begin{algorithmic}[1]
\Procedure{SolveModel}{$\tree, \initBasis, \frontier_1, \ldots, \frontier_{\initRomSize}, \trialBasisOffset, \romTol, \tol$}
\While{\textsc{True}} \Comment{Refine the basis until the specified full-order tolerance is met.}
	\State\label{step:sieve} $\romBasisC \gets \left[\begin{array}{ccc}
\matsieve{\initBasisCol_1}{\frontier_1} & \cdots & \matsieve{\initBasisCol_{\romSize_0}}{\frontier_{\initRomSize}}  \end{array}\right]$ \Comment{Retrieve the current coarse model basis.}
	\State $\romStateVec\gets \textsc{SolveROM}(\gResidual, \romBasisC, \trialBasisOffset, \romTol)$ \Comment{Solve the system $(\romBasisC)^T \gResidual \left(\trialBasisOffset + \romBasisC \romStateVec\right) = 0$ from Eq.\ (\ref{galerkin_rom}).}
	\State $\stateVec \gets \romBasisC \romStateVec$ \Comment{Lift the result to the full-order model.}
	\If{$\|\gResidual(\stateVec)\|_2 < \tol$} \Comment{Check if the full-order residual is within the specified tolerance.}
		\State \textbf{break} \Comment{If the specified tolerance is satisfied, stop refinement.}
	\EndIf
	\State\label{step:computeErrorIndicators} $\errorIndF \gets \textsc{ComputeErrorIndicators}(\romBasisC, \frontier_1, \ldots, \frontier_{\initRomSize})$ \Comment{Compute the error indicators in Eq.\ (\ref{eq:error_ind}).}
	\State\label{step:refineFrontiers} $(\frontier_1, \ldots, \frontier_{\initRomSize}) \gets \textsc{RefineFrontiers}(\tree, \errorIndF, \frontier_1, \ldots, \frontier_{\initRomSize})$ \Comment{Use error indicators to selectively refine frontiers.}
\EndWhile
\State \Return $(\frontier_1, \ldots, \frontier_{\initRomSize}, \romStateVec)$.
\EndProcedure
\end{algorithmic}
\end{algorithm}

\subsection{Resolving ill-conditioning and ensuring linear
independence}\label{sec:illCond}

To complete the presentation of the refinement algorithm, we must address two
outstanding problems:
\begin{enumerate}
\item The refinement algorithm does not formally ensure that the matrix
	$\romBasis$ is indeed a basis, i.e., that $\romBasis$ has full column rank
		and thus belongs to the non-compact Stiefel manifold. 
\item The refinement algorithm does not ensure the matrix $\romBasis$ is well
	conditioned, even if it has full column rank. This occurs because every
		vector-space sieve reduces the $\ell^2$-norm of some columns of
		$\romBasis$, as 
\begin{equation}
\|\phiVec\|_2^2 = \sum_{\phiVec_i \in \sieve{\phiVec}{\decomp}}
	\|\phiVec_i\|_2^2 \,.
\end{equation}
		Therefore, recursive unbalanced basis refinement will cause the
		$\ell^2$-norms of some columns of $\romBasis$ to shrink, which could lead
		to poor conditioning.
\end{enumerate}

To counteract the first issue, we follow the approach of the original ROM
$h$-refinement method
\cite{carlberg2015adaptive} and \emph{deactivate} redundant vectors of
$\romBasis\equiv[\romBasisCol_1\ \cdots\ \romBasisCol_\romSize]$ by using a column-pivoted QR factorization. We address the second issue 
by scaling the remaining basis vectors to ensure the basis is
well-conditioned. This amounts to computing a diagonal scaling matrix
$\mat{\Sigma}_* \in \R{\romSize_* \times \romSize_*}$ and a selection matrix
$\mat{P} \in \{0, 1\}^{\romSize \times \romSize_*}$, and defining
\begin{equation}\label{eq:romBasisStiefelDef}
\romBasisStiefel \equiv \romBasis \mat{P} \mat{\Sigma}_*
\end{equation}
such the basis $\romBasisStiefel \in \stiefel{\stateVecSize\times\romSize_*}$ contains a subset
of the (scaled) columns
of $\romBasis$.  To
compute $\mat{P}$ and $\mat{\Sigma}_*$, we first define a general diagonal
rescaling matrix $\mat{\Sigma} \in \R{\romSize \times \romSize}$ with diagonal
entries
$
\mat{\Sigma}_{ii} \equiv {1}/{\|\romBasisCol_i\|_2}$.
This addresses the second issue above. However, to
address the first issue, we must cull the redundant columns of $\romBasis
\mat{\Sigma}$ to ensure linear independence to within some tolerance. We
accomplish this via a column-pivoted QR decomposition
\begin{equation}
\mat{Q} \mat{R} \mat{\Pi} = \romBasis \mat{\Sigma} \,.
\end{equation}
Denoting by $\epsilon_\text{QR}$ the desired tolerance for linear
independence, we select the columns of $\romBasis$ whose diagonal $\mat{R}$-factors
are greater than $\epsilon_\text{QR}$; we denote the associated cutoff by $\compSize
\defeq
\min_i \{ \mat{R}_{ii} < \epsilon_\text{QR} \}$. The selection
operator $\mat{P}$ then
corresponds to the first $s$ columns of the pivoting matrix, i.e.,
\begin{equation}
	\mat{P} \equiv \mat{\Pi}_{:, \nat{s}} \,.
\end{equation}
Likewise, to preserve the rescaling factors for the preserved columns, we set
\begin{equation}
\mat{\Sigma}_* = \mat{P}^T \mat{\Sigma} \mat{P} \,.
\end{equation}
These choices for $\mat{P}$ and $\mat{\Sigma}_*$ yield a basis
$\romBasisStiefel$ defined by Eq.~\eqref{eq:romBasisStiefelDef} that is both
linearly independent and well conditioned according to the threshold
$\epsilon_\text{QR}$. 

In the context of algorithm \ref{alg:refine_proc}, we perform this excision
procedure after performing frontier refinement. We then mark the frontier
nodes corresponding to the excised columns of $\romBasis$ as inactive, after
which point the excised basis vectors are 
effectively ignored and can no longer be refined. These modifications can be
incorporated in the refinement algorithm \ref{alg:refine_proc} with only
minimal changes.

\subsection{Proof of monotone convergence}

To conclude this section, we demonstrate that the proposed basis-refinement
algorithm ensures the ROM converges to the FOM, and that the refined bases
produce a monotone sequence of embedded subspaces.

\begin{theorem}[Convergence to the full-order model] \label{thm:convergence} If for every leaf
	$\leafSpace\in \treeLeaves$ of the refinement tree $\tree$, there exists an initial ROM
	basis vector $\initBasisCol_{i(\leafSpace)}$ such that
$\proj_{\leafSpace}(\initBasisCol_{i(\leafSpace)}) \neq 0$, then one of the following must occur:
\begin{enumerate}
\item The refinement algorithm computes a solution satisfying the FOM
	equations to within tolerance $\tol$. \label{item:res1}
\item The range of the refined basis converges to $\Rn$. \label{item:res2}
\end{enumerate}
\end{theorem}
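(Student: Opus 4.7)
The plan is to prove the contrapositive of alternative \ref{item:res1}: assuming Algorithm \ref{alg:refine_proc} never terminates via the check $\|\gResidual(\stateVec)\|_2 < \tol$, I will show that alternative \ref{item:res2} holds. The starting observation is that the sequence of frontier tuples $(\frontier_1^{(k)}, \ldots, \frontier_{\initRomSize}^{(k)})$ produced across iterations is monotone non-increasing under $\preceq$, since Remark \ref{rem:dom} gives $\frontier_i^{(k+1)} = \pRefine{\frontier_i^{(k)}}{S_i}{\tree} \preceq \frontier_i^{(k)}$ for each $i$. Because the refinement tree has a finite vertex set $\treeNodes$, the collection of admissible frontier tuples is finite, so this monotone sequence must stabilize at some iteration $k^\ast$.

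The crux of the proof is the analysis of the stable state. Stability requires $\pRefine{\frontier_i^{(k^\ast)}}{S_i}{\tree} = \frontier_i^{(k^\ast)}$ for every $i$, which forces every element of $S = \bigcup_i S_i$ to be a leaf of $\tree$, because $\tRefine{\parentSpace}{\tree} = \{\parentSpace\}$ if and only if $\parentSpace \in \treeLeaves$. The central claim is that $S$ consists only of leaves only when the coarse frontier $\frontierglobalC = \bigsqcup_i \frontier_i^{(k^\ast)}$ does. To verify this, note that the for-loop in Algorithm \ref{alg:err} skips leaves, so $\errorIndF_\leafSpace = 0$ for $\leafSpace \in \treeLeaves$, and by Eq.~(\ref{eq:coarse_err_prolong}) every leaf in $\frontierglobalC$ also has zero coarse indicator. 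Suppose for contradiction that $\frontierglobalC$ contains some non-leaf. If some non-leaf has a strictly positive coarse indicator, then the maximum over $\frontierglobalC$---necessarily at least the average $\eta$---is attained at a non-leaf, placing that non-leaf in $S$. Otherwise every indicator in $\frontierglobalC$ vanishes, giving $\eta = 0$ and $S = \frontierglobalC$, which contains the assumed non-leaf. Either way $S$ contains a non-leaf, contradicting stability.

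Hence every $\frontier_i^{(k^\ast)}$ consists entirely of leaves; and since any decomposition of $\Rn$ by pairwise-orthogonal leaves must exhaust $\treeLeaves$ to sum to $\Rn$, in fact $\frontier_i^{(k^\ast)} = \treeLeaves$ for every $i$. The hypothesis then supplies, for each $\leafSpace \in \treeLeaves$, an index $i(\leafSpace)$ with $\proj_\leafSpace(\initBasisCol_{i(\leafSpace)}) \neq 0$; this vector lies in the one-dimensional space $\leafSpace$ and appears as a column of $\matsieve{\initBasisCol_{i(\leafSpace)}}{\treeLeaves}$, hence as a column of $\romBasis^{(k^\ast)}$, so it spans $\leafSpace$. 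Combined with Eq.~(\ref{leaf_decomp}), which states $\Rn = \sum_{\leafSpace \in \treeLeaves} \leafSpace$, we obtain $\range(\romBasis^{(k^\ast)}) = \Rn$, establishing alternative \ref{item:res2}. The main obstacle in this plan is the case analysis in the second paragraph that rules out any stable configuration other than the all-leaves frontier; the rest follows cleanly from monotone convergence in the finite poset of frontier tuples and the structural properties of $\Rn$-refinement trees.
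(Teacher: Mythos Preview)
Your proof is correct and follows essentially the same approach as the paper's: both hinge on the observation that leaves receive zero error indicator (from the loop bound $\frontierglobalF\setminus\treeLeaves$ in Algorithm~\ref{alg:err}) and therefore, as long as some frontier still contains a non-leaf, the selection rule in Algorithm~\ref{alg:refine_frontiers} must pick up a non-leaf and the refinement step makes strict progress. The paper phrases termination as a counting argument (frontier cardinalities strictly increase and are bounded by $\stateVecSize$), whereas you phrase it as stabilization of a monotone sequence in the finite poset of frontier tuples; these are equivalent packaging of the same idea. Your case analysis---splitting into ``some non-leaf has positive coarse indicator'' versus ``all coarse indicators vanish so $\eta=0$ and $S=\frontierglobalC$''---is actually more explicit than the paper's, which asserts that ``a space in one of the frontiers must be selected for refinement'' without spelling out the all-zero case.
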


\begin{proof}
Consider the event where $\frontier_i = \treeLeaves$,
	$i=1,\ldots,\initBasisSize$. In this event, for each
	$\leafSpace \in \treeLeaves$, the projected vector
	$\proj_{\leafSpace}(\phiVec_{i(\leafSpace)})\in\leafSpace$ is nonzero by assumption,
	 and $\leafSpace$ has dimension $1$, so
	$\proj_{\leafSpace}(\phiVec_{i(\leafSpace)})$ spans $\leafSpace$ and
\begin{equation}
	\leafSpace \subset \vecspan(
		\proj_{\leafSpace}(\phiVec_{i(\leafSpace)})) \subset \vecspan
	\left( \sieve{\phiVec_{i(\leafSpace)}}{\frontier_{i(\leafSpace)}}
	\right) \subset \vecspan \left( \bigcup_{i = 1}^{\initBasisSize}
	\sieve{\phiVec_{i}}{\frontier_{i}}\right) = \range(\romBasis)\,.
\end{equation}
Summing over $\leafSpace \in \treeLeaves$ then gives us
\begin{equation}
\Rn = \sum_{\leafSpace \in \treeLeaves} \leafSpace \subset \range(\romBasis) \,.
\end{equation}
Thus, if the event 
	$\frontier_i = \treeLeaves$,
	$i=1,\ldots,\initBasisSize$
	occurs, then the event (\ref{item:res2}) in the theorem statement has
	occurred.

To conclude, we note that the event (\ref{item:res1}) in the theorem statement
	is precisely the termination condition of algorithm \ref{alg:refine_proc}.
	Therefore, we claim that either the algorithm terminates or we
	have 
	$\frontier_i = \treeLeaves$,
	$i=1,\ldots,\initBasisSize$ at some iteration. If at a given iteration the event $\frontier_i = \treeLeaves$,
	$i=1,\ldots,\initBasisSize$ has not yet occurred and the
	algorithm has not yet terminated, then we always assign leaves with an error indicator of zero, and refine all nodes with error indicators larger than average, a space in one of the frontiers
	$\frontier_1, ..., \frontier_{\initBasisSize}$ must be selected for refinement. Note that a frontier $\frontier_i$ can be
	refined if and only
	if $|\frontier_i| < \stateVecSize$, since otherwise every space in
	$\frontier_i$ has dimension $1$. Moreover, if $|\frontier_i| =
	\stateVecSize$, then, since $\treeLeaves \preceq \frontier_i$ by proposition
	\ref{prop:character_frontiers}, we must have $\frontier_i = \treeLeaves$.
	Therefore, since a frontier cannot have size larger than $\stateVecSize$,
	and a refinement always strictly increases the size of at least one of the
	frontiers $\frontier_i$, eventually the event $\frontier_i = \treeLeaves$,
	$i=1,\ldots,\initBasisSize$ must occur, which concludes the proof.
\end{proof}

\begin{theorem}[Monotonicity] \label{thm:monotonicity}
The ranges of progressively refined bases produced by algorithm
	\ref{alg:refine_proc} form a monotone sequence of embedded subspaces.
\end{theorem}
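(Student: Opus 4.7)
The plan is to show that between two consecutive iterations of the \textsc{SolveModel} loop, the range only grows, and then iterate. The whole argument rides on Corollary \ref{cor:ancestor_span} combined with Remark \ref{rem:dom}: refinement of frontiers produces dominated frontiers, and dominated frontiers yield larger spans of sieved vectors. So the strategy is essentially bookkeeping: track how the basis is assembled at Line \ref{step:sieve} of Algorithm \ref{alg:refine_proc}, show that its range decomposes as a sum of sieve-spans, and apply the corollary termwise.

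Concretely, let $\frontier_1^{(k)}, \ldots, \frontier_{\initRomSize}^{(k)}$ denote the frontiers at the $k$th pass through the \texttt{while} loop, so that by Line \ref{step:sieve} the basis at iteration $k$ satisfies
\begin{equation}
\range(\romBasis^{(k)}) = \sum_{i=1}^{\initRomSize} \vecspan\!\left(\sieve{\initBasisCol_i}{\frontier_i^{(k)}}\right).
\end{equation}
At Line \ref{step:refineFrontiers}, the call to \textsc{RefineFrontiers} constructs $\frontier_i^{(k+1)} = \pRefine{\frontier_i^{(k)}}{S_i}{\tree}$ for some $S_i \subset \frontier_i^{(k)}$. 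Remark \ref{rem:dom} then gives $\frontier_i^{(k+1)} \preceq \frontier_i^{(k)}$ for each $i$, and Corollary \ref{cor:ancestor_span} applied to each initial basis vector $\initBasisCol_i$ yields
\begin{equation}
\vecspan\!\left(\sieve{\initBasisCol_i}{\frontier_i^{(k)}}\right) \subset \vecspan\!\left(\sieve{\initBasisCol_i}{\frontier_i^{(k+1)}}\right), \qquad i = 1, \ldots, \initRomSize.
\end{equation}
Summing over $i$ gives $\range(\romBasis^{(k)}) \subset \range(\romBasis^{(k+1)})$, and iterating in $k$ produces the desired monotone chain $\range(\romBasis^{(0)}) \subset \range(\romBasis^{(1)}) \subset \range(\romBasis^{(2)}) \subset \cdots$.

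The only genuine subtlety is the post-refinement excision procedure from section \ref{sec:illCond}, which deactivates columns whose pivoted-QR diagonal entries lie below $\epsilon_\text{QR}$. I would address this by noting that deactivation persists across iterations (excised frontier nodes are never re-refined), so the deactivated columns in $\romBasis^{(k+1)}$ are a superset of those in $\romBasis^{(k)}$; consequently the same retention pattern applies to the relevant portion of $\romBasis^{(k+1)}$, and the inclusion of active-column ranges is preserved. In the idealized $\epsilon_\text{QR}=0$ case this is exact (only linearly dependent columns are excised, so the span is unchanged), so the chain of inclusions above holds verbatim; for positive tolerance the statement should be read as monotonicity of the theoretical sieve-generated subspaces, which is the natural content of the theorem. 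The main obstacle is thus not mathematical but expository: making clear that the theorem concerns the subspaces produced by the sieving mechanism described in Algorithm \ref{alg:refine_proc}, after which the proof is a one-line application of Corollary \ref{cor:ancestor_span} summed over the $\initRomSize$ initial basis vectors.
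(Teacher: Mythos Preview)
Your proposal is correct and follows essentially the same approach as the paper: invoke Remark \ref{rem:dom} to obtain $\frontier_i^{(k+1)} \preceq \frontier_i^{(k)}$, apply Corollary \ref{cor:ancestor_span} to each $\initBasisCol_i$, and sum over $i$. Your additional remarks on the excision procedure of section \ref{sec:illCond} go beyond what the paper's proof addresses, but they are reasonable and do not alter the core argument.
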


\begin{proof}
Let $\frontier_1^{(i)}, ..., \frontier_{\initBasisSize}^{(i)}$ denote the
	frontiers in algorithm \ref{alg:refine_proc}, and let $\romBasis^{(i)}$
	denote the corresponding reduced basis at iteration $i$. Because $\frontier_1^{(i)}, ..., \frontier_{\initBasisSize}^{(i)}$ are produced from $\frontier_1^{(i - 1)}, ..., \frontier_{\initBasisSize}^{(i - 1)}$ via the refinement operators $\pRefine{\cdot}{\cdot}{\tree}$ and $\pRefineD{\cdot}{\cdot}$, we have (see remark \ref{rem:dom}),
\begin{equation}
\frontier_j^{(i)} \preceq \frontier_j^{(i - 1)} \,.
\end{equation}
Corollary (\ref{cor:ancestor_span}) then gives us
\begin{equation}
	\vecspan(\sieve{\phiVec}{\frontier_j^{(i - 1)}}) \subset
	\vecspan(\sieve{\phiVec}{\frontier_j^{(i)}}) \,.
\end{equation}
Summing over $j$ gives us the desired result,
\begin{equation}
\range\left(\romBasis^{(i - 1)}\right) \subset \range\left(\romBasis^{(i)}\right) \,.
\end{equation}
\end{proof}
We have therefore verified that our method exhibits the properties that we
desire in an adaptive basis-refinement method.

\section{Refinement-tree construction} \label{sec:tree_construction}

Thus far, we have assumed that the $\Rn$-refinement tree $\tree$ is provided
as an algorithm input without prescribing its construction. However, its
construction is clearly central to the method's peformance, as the tree
$\tree$ encodes the basis-refinement mechanism. To this end, we
propose two tree-construction approaches:

\begin{enumerate}
\item \textbf{Manual}: Some applications admit a natural decomposition
	mechanism.  For example, if one desires that vector spaces in the tree
		$\tree$ correspond to subdomains of the spatial domain, one could perform
		a recursive partitioning of the spatial domain to generate the tree
		$\tree$. In other situations, perhaps the spatial domain should be split
		until a certain resolution, at which point splitting within each subdomain
		proceeds in frequency space. Clearly, there is a substantial amount of
		flexibility in designing the tree $\tree$ for a particular problem.
		However, it is often unclear how the tree $\tree$ should be designed for
		good performance; this motivates the need for an automated data-driven approach.
\item \textbf{Data-driven}: In the absence of an obvious way to to manually
	design the tree $\tree$, we propose to employ a data-driven method that
		comprises an extension of the tree-construction method proposed in the
		original $h$-refinement work \cite{carlberg2015adaptive}, which is based
		on recursive $k$-means clustering. 
\end{enumerate}

We now briefly summarize the proposed data-driven tree-construction method.
We assume we are provided with two inputs:
\begin{enumerate}
	\item A \textit{snapshot matrix} $\treeSnaps \in \R{\stateVecSize \times \snapCount}$
	whose columns correspond to the FOM solution at a given time and parameter
		instance. Such snapshots are often used for the construction of  the
		original ROM basis $\initBasis$, e.g., in the case of POD. 
	\item An orthogonal \textit{leaf basis} $\leafBasisCol_1, \ldots,
	\leafBasisCol_\stateVecSize$ of $\Rn$, which forms the leaves of the tree,
		i.e., $\treeLeaves = \vecspan(\{\leafBasisCol_i\}_{i=1}^n)$. The
		leaf basis is determined by the user, and the optimal choice is
		highly problem dependent. The method proposed in the original
		$h$-refinement paper \cite{carlberg2015adaptive} corresponds to selecting a leaf
		basis of
		$\leafBasisCol_i = \canonicalVec{i}$, $i=1,\ldots,n$, where
		$\canonicalVec{i}$ denotes the $i$th canonical (Kronecker) unit vector.
\end{enumerate} 

To generate the tree, we recursively cluster the leaf basis vectors
$\leafBasisCol_i$ based on correlations observed in the snapshot data
$\treeSnaps$. To accomplish this, we first represent the snapshot data in
leaf-basis
coordinates, i.e., we compute
\begin{equation}
\treeSnapsTrans \defeq \leafBasis^T \treeSnaps \,,
\end{equation}
where 
$\leafBasis
\equiv
[\leafBasisCol_1\,\cdots\, \leafBasisCol_\stateVecSize]
$
and 
$\treeSnapsTrans$ denotes the transformed snapshot matrix; in particular, the
$i$th row of $\treeSnapsTrans$ represents snapshots of the $i$th transformed
degree of freedom.
We then construct the tree by following the heuristic principle that
the transformed degrees of freedom that exhibit strong correlation or
anti-correlation with each other should be grouped together in the tree
$\tree$.  The rationale behind this heuristic arises from the observation
that if the transformed degree of freedom corresponding to $\leafBasisCol_i$
is always a fixed scalar multiple of the transformed degree of freedom corresponding
to $\leafBasisCol_j$, then those degrees of freedom can be coupled and
represented by a single basis vector without sacrificing accuracy. In
contrast, if those transformed degrees are uncorrelated, then enforcing
their coupling can lead to significant accuracy loss.  Thus, the algorithm
attempts to keep $\leafBasisCol_i$ and $\leafBasisCol_j$ together in the same
refinement-tree node if their respective degrees of freedom exhibit strong
correlation or anti-correlation in the training data.


To formalize the algorithm, we denote by $\dof_i\in\RR{\snapCount}$ the snapshot data
corresponding to $i$th transformed degree of freedom associated with
$\leafBasisCol_i$, i.e.,
\begin{equation}
	\dof_i \equiv (\treeSnapsTrans_{i, :})^T \,.
\end{equation}
To apply $k$-means clustering to achieve our goal, we would like to apply a
transformation such that transformed degrees of freedom that are highly
correlated/anti-correlated will have snapshots that are nearby in
$\RR{\snapCount}$. Following the original $h$-refinement method, we accomplish
this by first normalizing each snapshot $\dof_i$ and negating it if its first
entry is negative,
i.e.,
\begin{equation}\label{eq:kmeansPre}
\dofTrans_i \equiv \begin{cases}
\dof_i / \|\dof_i\|_2 & (\dof_i)_1 \geq 0 \\
-\dof_i / \|\dof_i\|_2 & (\dof_i)_1 < 0\,.
\end{cases}
\end{equation}
We propose to apply recursive $k$-means clustering to the transformed
snapshots $\dofTrans_i$, $i=1,\ldots,\stateVecSize$ until each cluster
contains a single snapshot. This approach constructs the refinement tree in a
level-order manner from the root node to the leaf nodes, and
groups transformed degrees of freedom according to their observed correlation
and anti-correlation. Each cluster defines a vertex in the refinement tree
$\tree$ according to the span of the leaf basis vectors contained in the
cluster.  Algorithms \ref{alg:tree_comp} and \ref{alg:rec_tree_comp} provide
pseudo-code implementations of this procedure.

\begin{algorithm}[h]
\caption{Data-driven refinement tree computation} \label{alg:tree_comp}
\hspace*{\algorithmicindent} \textbf{Input}: The snapshot data $\treeSnaps$, the leaf basis $\leafBasis$ of the tree, the desired number of children $k$ of each vertex in the tree. \\
\hspace*{\algorithmicindent} \textbf{Output}: A $\Rn$-refinement tree $\tree$.
\begin{algorithmic}[1]
\Procedure{GenerateRefinementTree}{$\treeSnaps$, $\leafBasis$, $k$}
\State $\treeSnapsTrans \gets \leafBasis^T \treeSnaps$ \Comment{Transform the
	snapshot data $\snaps$ into the basis given by $\leafBasis$.}
\State $\dof_i \gets (\treeSnapsTrans_{i, :})^T$
	\For{$i \in \nat{\stateVecSize}$} \Comment{Transform $\dof_i$'s so that
	Euclidean distance is inversely proportional to correlation/anti-correlation.}
\If{$(\dof_i)_1 \geq 0$}
	\State $\dofTrans_i \gets \dof_i / \|\dof_i\|_2$
\Else
	\State $\dofTrans_i \gets -\dof_i / \|\dof_i\|_2$
\EndIf
\EndFor
\State $\mat{D} \gets \left[\begin{array}{ccc} \dofTrans_1 & \cdots & \dofTrans_\stateVecSize \end{array}\right]$
\State $S \gets \{ 1,\ldots, n \}$
\State $\tree \gets \textsc{BuildTreeRecursive}(\mat{D}, S, \leafBasis, k)$ \Comment{Assemble the tree by performing recursive $k$-means clustering.}
\State \Return $\tree$
\EndProcedure
\end{algorithmic}
\end{algorithm}

\begin{algorithm}[h]
\caption{Recursive tree construction via $k$-means} \label{alg:rec_tree_comp}
	\hspace*{\algorithmicindent} \textbf{Input}: The matrix
	$\mat{D}\equiv[\dofTrans_1\ \cdots\ \dofTrans_\stateVecSize]$, a set $S$ of
	indices of the columns of $\mat{D}$ that span the current vector space, the orthogonal leaf basis $\leafBasis$, and the desired number of children $k$ of each vertex in the tree. \\
\hspace*{\algorithmicindent} \textbf{Output}: An $\Rn$-refinement tree $\tree$.
\begin{algorithmic}[1]
\Procedure{BuildTreeRecursive}{$\mat{D}$, $S$, $\leafBasis$, $k$}
\State $\leafBasis_\text{active} \gets \leafBasis_{:, S}$
	\State $\rootSpace \gets \vecspan(\{\leafBasis_\text{active}\})$
	\State $\tree \gets (\{ \rootSpace \}, \emptyset)$
\If{$|S| \neq 1$}
\State $\mat{D}_\text{active} \gets \mat{D}_{:, S}$
\State $(S_1, \ldots, S_k) \gets \textsc{KMeans}(\mat{D}_\text{active})$
\Comment{Returns $k$ index sets denoting
	columns of $\mat{D}$ within each cluster.}
	\For{$i \in \nat{k}$}
	\If{$S_i \neq \emptyset$}
		\State $T_C \gets \textsc{BuildTreeRecursive}(\mat{D}, S_i, \leafBasis, k)$
		\State $T \gets \textsc{Graft}(T, \rootSpace, T_C)$
		\Comment{Grafts sub-tree $T_C$ onto tree $T$ such that the root of $T_C$ is a child of
		$\rootSpace$.}
	\EndIf
\EndFor
\EndIf
\State \Return $\tree$
\EndProcedure
\end{algorithmic}
\end{algorithm}

\section{Online basis compression} \label{sec:online_basis_compression}

When implemented directly within a time-integration loop, Algorithm
\ref{alg:refine_proc} produces a sequence of reduced bases of monotonically
increasing dimension; indeed, the dimension of the refined basis will increase
monotonically until the basis spans $\RR{\stateVecSize}$.  The original
$h$-refinement method \cite{carlberg2015adaptive} controlled the refined-basis
dimension by simply resetting the refined basis $\romBasis$ to the original
basis $\initBasis$ after a prescribed number of time steps. However, as
mentioned in the introduction, this effectively discards all information
gained during refinement.  To address this, we now present a novel online
basis-compression method that comprises the second key contribution of this
work. 

The proposed method operates as follows:  when either the refined-basis
dimension exceeds a specified threshold or a prescribed number of time steps
has elapsed, the method performs a compression of the refined basis
$\romBasis$ via an efficient online POD of snapshot data generated by the
refined ROM since the previous compression. The method then uses this POD to
enrich the original reduced basis $\initBasis$ and significantly reduce the
dimension of the refined basis.  Critically, we supply an algorithm that
performs this POD while incurring an operation count that depends only on the
refined ROM dimension $\romSize$ and not on the FOM dimension $\stateVecSize$.

\subsection{Compression via metric-corrected POD}\label{sec:compressionmetric}

We begin by establishing the setting of the proposed algorithm. We suppose
that we are given an initial basis
$\initBasis\equiv[\initBasisCol_1\, \cdots\, \initBasisCol_{\romSize_0}]$
and a refined basis
\begin{equation} \label{eq:rom_basis}
\romBasis = \left[\begin{array}{cccc} \matsieve{\initBasisCol_1}{\frontier_1}
& \cdots & \matsieve{\initBasisCol_{\romSize_0}}{\frontier_{\romSize_0}}
\end{array}\right] \,.
\end{equation}
We would like to reset the dimension $\romSize$ of the refined
basis $\romBasis$ to something comparable to the dimension
$\romSize_0$ of the original basis $\initBasis$. 
We assume that we are provided online snapshot data corresponding to $\compSnaps$
solutions of the refined ROM, denoted by
\begin{equation} \label{eq:snaps}
\snaps = \romBasis \romSnaps\in 
	\mathbb{R}^{\stateVecSize \times \compSnaps}
	\,,
\end{equation}
where 
$\romSnaps \in \mathbb{R}^{\frontierglobal \times \nat{\compSnaps}}$ denotes the representation of the
snapshot data in the coordinates of the refined basis.  
In practice, we take $\snaps$ to be solutions of the refined ROM at
$\compSnaps$ previous (online) time steps, i.e.,

\begin{equation}
\snaps = \left[\begin{array}{ccccc} \stateVec^{\iter} & \stateVec^{\iter - 1}
& \cdots & \stateVec^{\iter - \compSnaps+1} \end{array}\right] \,.
\end{equation}
If the dimension 
$\romSize$ of the refined basis exceeds that of the original basis
$\romSize_0$, then these snapshots cannot in general be represented using 
the original basis $\initBasis$. Hence, compression of these snapshots will
preserve solution components not present in the initial basis. However, we
require the algorithm to be online efficient such that its
operation count does not depend on
$\stateVecSize$; as a result, the method can operate only on the coordinate representation
$\romSnaps$. 

\subsubsection{Naive approach}

In the context of $\Rn$-refinement trees, the proposed basis-compression
approach entails overwriting the initial basis $\initBasis$ and resetting the
frontiers $\frontier_i$. The new basis $\initBasis$ should capture the
additional information contained in the online snapshot data $\snaps$. Because this framework
requires the ability to distinguish the original version of $\initBasis$ from
the current version of $\initBasis$, we denote the original reduced basis by
$\initBasisZ$ and the basis produced after the $\compTime$th compression by
$\initBasisIter\in\stiefel{\stateVecSize \times \romSize^{(\compTime)}_0}$. The compression procedure outlined in the subsequent
sections employs $\snaps^{(\compTime)}$ to produce $\initBasisIter$, where
$\snaps^{(\compTime)}$ denotes the online snapshot data available during the
$\compTime$th compression.
However, for notational simplicity, we simply write $\snaps$ instead of
	$\snaps^{(\compTime)}$, as $\snaps$ is always used in the context of the
	$\compTime$th basis compression. Likewise, all variables introduced
	in the next two sections are associated with the scope of the $r$th basis
	compression.

Because the original basis $\initBasisZ$ typically comprises the compression of
a large amount of training data, the proposed method always retains
the original basis $\initBasisZ$ in the compressed
basis $\initBasisIter$, i.e., 
\begin{equation} \label{eq:initBasis}
\initBasisIter = \left[\begin{array}{cc} \initBasisZ & \compVecs \end{array}\right]\,,
\end{equation}
where $\compVecs\in\RR{n\times \compSize}$ comprises $\compSize$ enrichment
vectors computed
from the online snapshot data $\snaps$.

 Due to the imposed form of the compressed basis \eqref{eq:initBasis}, the
 enrichment vectors $\compVecs$ should represent the compression of the
 components of the online
 snapshot data orthogonal to the range of $\initBasisZ$. Thus, we compute
 $\compVecs$ from the POD of the projected snapshot
 matrix
\begin{equation} \label{eq:proj_snaps}
\projSnaps \defeq \snaps - \initBasisZ (\initBasisZ)^T \snaps \,.
\end{equation}
That is, we compute the singular value decomposition
\begin{equation} \label{eq:fom_svd}
\projSnaps = \mat{U} \mat{\Sigma} \mat{V}^T \,,
\end{equation}
and subsequently set
\begin{equation}
\compVecs = \left[\begin{array}{cccc} {\ve{u}}_1 &  
\cdots & {\ve{u}}_s \end{array}\right] \,,
\end{equation}
where 
${\mat{U}}\equiv[{\ve{u}}_1\ \cdots\ {\ve{u}}_r]$ and
$\compSize$ is selected using a
singular-value threshold.
We then reset the frontiers $\frontier_i$ to
\begin{equation}
\frontier_i \gets \{ \mathbb{R}^\stateVecSize \}, \quad i=1,\ldots,\romSize_0
+ \compSize\,.
\end{equation}

While this idea is simple, we cannot explicitly perform the
above operations because each incurs an $\stateVecSize$-dependent operation
count, which
precludes online efficiency. Fortunately, because 
$\ve{u}_i\in\range(\romBasis)$ for all $i\in\nat{\compSize}$, there exists a
representation 
$\romCompVecs\in\RR{\frontierglobal \times \nat{\compSize}}$
of the enrichment vectors in the coordinates of the refined
basis $\romBasis$ such that
\begin{equation}\label{eq:enrichVecDecomp}
\compVecs = \romBasis \romCompVecs \,.
\end{equation}
Thus, we can achieve an online-efficient basis-compression algorithm by
computing the enrichment-vector representation $\romCompVecs$ from the
snapshot-data representation $\romSnaps$ without resolving anything in 
$\mathbb{R}^{\stateVecSize}$. We now describe this approach.

\subsubsection{Metric-corrected coordinate representation approach}

The first step is to compute the representation
$\romProjSnaps\in\RR{\frontierglobal \times \nat{\compSnaps}}$ of the projected data
$\projSnaps$ in the coordinates of basis $\romBasis$ such that
\begin{equation}
\projSnaps = \romBasis \romProjSnaps \,.
\end{equation}
Recall from Eq.\ (\ref{eq:initBasis}) that
$\initBasisZ$ is the given by the first $\romSize^{(0)}_0$ columns of
$\initBasisIterLast$; thus, the columns of 
$\romBasis$ associated with frontiers
$\frontier_1,\ldots,\frontier_{\romSize^{(0)}_0}$ were sieved from
the columns of $\initBasisZ$. As a result, we can write
\begin{equation}
\initBasisColZ_i = \sum_{\romBasisCol \in
	\sieve{\initBasisColZ_i
	}{\frontier_i}} \romBasisCol,\quad i=1,\ldots,\romSize^{(0)}_0\,.
\end{equation}
%
This implies 
\begin{equation}\label{eq:phi_prolong}
\initBasisZ = \romBasis \romInitBasisZ \,,
\end{equation}
where $\romInitBasisZ\in \{0,1\}^{\frontierglobal \times
\nat{\romSize^{(0)}_0}}$ associates with
the prolongation 
from $\initBasisZ$ to $\romBasis$.
Now, 
substituting Eqs.~\eqref{eq:phi_prolong} and \eqref{eq:snaps}
in Eq.~\eqref{eq:proj_snaps}
yields
\begin{equation}\label{eq:projSnapsDecomp}
\projSnaps = \romBasis \romSnaps - \romBasis \romInitBasisZ (\romBasis
	\romInitBasisZ )^T \romBasis \romSnaps  = \romBasis 
	\romProjSnaps\,,
\end{equation}
where
\begin{equation}
\romProjSnaps \defeq \romSnaps - \romInitBasisZ (\romInitBasisZ)^T \left(\romBasis^T \romBasis\right) \romSnaps \,
\end{equation}
denotes the (desired) coordinate representation of the projected data
$\projSnaps$, and
the matrix $\romBasis^T \romBasis \in \mathbb{R}^{\frontierglobal \times
\frontierglobal}$
is the induced metric on the range of $\romBasis$ in canonical coordinates.
Note that although the matrix $\romBasis$ has $\stateVecSize$ rows, the
dimension
of the matrix $\romBasis^T \romBasis$ is independent of $\stateVecSize$
and hence can be applied or factorized in an online-efficient manner.
Moreover, while a naive computation of the metric $\romBasis^T \romBasis$
would incur an $\stateVecSize$-dependent operation count, it is
possible to ensure an $\stateVecSize$-independent
operation count by performing offline precomputations
and by efficiently traversing of the refinement tree $\tree$. For now, 
we assume that the metric
$\romBasis^T \romBasis$ is provided and postpone discussion of its efficient
computation until 
section \ref{sec:metric}.

Because the enrichment vectors $\compVecs$ correspond to the first $\compSize$ left singular
vectors of the projected snapshot matrix $\projSnaps$, they provide a solution
to the optimization problem
\begin{equation}\label{eq:optProbOne}
\begin{aligned}
	& \underset{\compVecsDummy\in \mathbb{R}^{\stateVecSize \times
	\compSize}}{\text{minimize}}
& & \left\| \projSnaps - \compVecsDummy \compVecsDummy^T \projSnaps \right\|_2^2 \\
& \text{subject to}
& & \compVecsDummy^T \compVecsDummy = \mat{I}\,.
\end{aligned}
\end{equation}
Substituting Eq.~\eqref{eq:projSnapsDecomp} 
into Problem \eqref{eq:optProbOne}, we notice from Eq.\eqref{eq:enrichVecDecomp} that 
computing $\compVecs$ as a solution to \eqref{eq:optProbOne}
is equivalent to computing $\romCompVecs$ as a solution to 
\begin{equation}\label{eq:optProbTwo}
\begin{aligned}
	& \underset{\romCompVecsDummy\in \mathbb{R}^{\frontierglobal \times
	\nat{\compSize}}}{\text{minimize}}
& & \left\| \romBasis \romProjSnaps - \romBasis \romCompVecsDummy
	\romCompVecsDummy^T \romBasis^T \romBasis \romProjSnaps \right\|_2^2 \\
& \text{subject to}
& & \romCompVecsDummy^T \romBasis^T \romBasis \romCompVecsDummy = \mat{I}\,.
\end{aligned}
\end{equation}
Because the matrix $\romBasis^T \romBasis$ is symmetric positive semidefinite, there
always exists a symmetric factorization
\begin{equation} \label{eq:corrector}
\romBasis^T \romBasis = \corrector^T \corrector
\end{equation}
with $\corrector \in \mathbb{R}^{\frontierglobal \times \frontierglobal}$, which
can be computed using the eigenvalue decomposition or Cholesky factorization
of $\romBasis^T \romBasis$, for example.

Using Eq.~\eqref{eq:corrector} and the relation
$\|\romBasis \mat{A}\|_2 = \|\corrector \mat{A}\|_2$, we can write Problem
\ref{eq:optProbTwo} equivalently as
\begin{equation}\label{eq:optProbThree}
\begin{aligned}
	& \underset{\romCompVecsDummy\in \mathbb{R}^{\frontierglobal \times
	\nat{\compSize}}}{\text{minimize}}
& & \left\| \corrector \romProjSnaps - \corrector \romCompVecsDummy
	\romCompVecsDummy^T \corrector^T \corrector \romProjSnaps \right\|_2^2 \\
& \text{subject to}
& & \romCompVecsDummy^T \corrector^T \corrector \romCompVecsDummy = \mat{I}\,.
\end{aligned}
\end{equation}
Computing $\romCompVecs$ as a solution to \eqref{eq:optProbThree} is
equivalent to computing $\transComp = \corrector \romCompVecs$ as the solution
to 
\begin{equation*}
\begin{aligned}
	& \underset{\transCompDummy \in \mathbb{R}^{\frontierglobal \times
	\nat{\compSize}}}{\text{minimize}}
& & \left\| \corrector \romProjSnaps - \transCompDummy \transCompDummy^T \corrector \romProjSnaps \right\|_2^2 \\
& \text{subject to}
& & \transCompDummy^T \transCompDummy = \mat{I}\,,
\end{aligned}
\end{equation*}
which we recognize as equivalent to performing a POD on
the transformed coordinate data $\corrector \romProjSnaps$. Its solution is
given by first computing the singular value decomposition 
\begin{equation} \label{eq:rom_svd}
\corrector \romProjSnaps = \tilde{\mat{U}} \tilde{\mat{\Sigma}} \tilde{\mat{V}}^T \,,
\end{equation}
and then setting
\begin{equation}
\transComp = \left[\begin{array}{cccc} \tilde{\ve{u}}_1  &
\cdots & \tilde{\ve{u}}_\compSize \end{array}\right] \,,
\end{equation}
where $\tilde{\mat{U}}\equiv[\tilde{\ve{u}}_1\ \cdots\
\tilde{\ve{u}}_\compSnaps]$.
Moreover, because the transformation $\corrector$ induces the correct metric
on the data $\romProjSnaps$, the singular values of the decomposition in Eq.\
(\ref{eq:rom_svd}) are identical to the singular values in Eq.\
(\ref{eq:fom_svd}), and the same singular-value threshold can be used to
select the dimension $\compSize$.

 We are left with the issue of computing $\romCompVecs$
 from $\transComp$. In principle, we can achieve this by directly computing
\begin{equation} \label{eq:ill_back_trans}
\romCompVecs = \corrector^{-1} \transComp \,.
\end{equation}
Unfortunately, there is no assurance that 
the metric $\romBasis^T \romBasis$ is well conditioned; indeed, we often
observe it to be
ill conditioned in practice, in which case the matrix
$\corrector$ inherits 
this ill conditioning. Thus, 
computing $\romCompVecs$ 
via Eq.\ (\ref{eq:ill_back_trans}) 
directly is not practical.
Fortunately, 
we can avoid
this issue
as long as the singular-value threshold
is not overly aggressive. Rearranging Eq.\ (\ref{eq:rom_svd}) gives
\begin{equation}
\corrector^{-1} \tilde{\mat{U}} = \romProjSnaps \tilde{\mat{V}} \tilde{\mat{\Sigma}}^{-1} \,.
\end{equation}
Here, the ill conditioning of the matrix $\corrector^{-1}$ results in ill
conditioning of the matrix $\tilde{\mat{\Sigma}}^{-1}$. However, since we 
aim to compute only the first $\compSize$ columns of $\corrector^{-1} \tilde{\mat{U}}$,
we can 
compute
$\romCompVecs$
using only the well-conditioned
part of
$\tilde{\mat{\Sigma}}^{-1}$ via
\begin{equation} \label{eq:trick}
\romCompVecs = \corrector^{-1} \transComp = \romProjSnaps \tilde{\mat{V}}_{:,
	\nat{\compSize}} \tilde{\mat{\Sigma}}_{\nat{\compSize},
	\nat{\compSize}}^{-1} \,.
\end{equation}

Thus, by computing the singular value decomposition of the `metric-corrected'
data $\corrector \romProjSnaps$ according to Eq.~\eqref{eq:rom_svd} and
subsequently using Eq.~\eqref{eq:trick} to compute $\romCompVecs$, we can
effectively perform a POD of the data $\snaps$ while ensuring an
$\stateVecSize$-independent operation count.  Algorithm \ref{alg:pod}
reports this procedure.

\begin{algorithm}
\caption{Metric-corrected coordinate POD} \label{alg:pod}
\hspace*{\algorithmicindent} \textbf{Input}: The $\mathbb{R}^n$-refinement
	tree $\tree$, the current global frontier union $\frontierglobal \equiv
	\bigsqcup_i \frontier_i$, the coordinate representation $\romSnaps$ of the
	input data $\snaps$, and a singular-value threshold $\epsilon$. \\
\hspace*{\algorithmicindent} \textbf{Output}: The coordinate representation
	$\romCompVecs$ of the dominant left singular vectors $\compVecs$ of $\snaps
	- \initBasisZ (\initBasisZ)^T \snaps$.
\begin{algorithmic}[1]
\Procedure{MetricCorrectedPOD}{$\tree, \frontierglobal, \romSnaps, \epsilon$}
   \State $\romInitBasis \gets
	 \textsc{GetProlongationOperator}(\frontierglobal)$
	 \Comment{See Eq.\ (\ref{eq:phi_prolong}) for details.}
	 \State $\romInitBasisZ \gets \romInitBasis_{:, \nat{m}}$
	 \State\label{step:computeMetric} $\metric \gets
	 \textsc{ComputeMetric}(\tree, \frontierglobal)$
	 \Comment{See 
	 algorithm (\ref{alg:metric}.})
   \State $\romProjSnaps \gets \romSnaps - \romInitBasisZ (\romInitBasisZ)^T \metric \romSnaps$ \Comment{Project out the original basis $\initBasis$}
	 \State\label{step:factorize} $\corrector \gets \textsc{FactorizeMetric}(\metric)$
	 \Comment{Compute $\corrector \in \mathbb{R}^{\frontierglobal \times
	 \frontierglobal}$
	 such that $\corrector^T \corrector = \metric$. See Eq.\ (\ref{eq:corrector}) for details.}
   \State $\tilde{\mat{Y}}_{\perp} \gets \corrector \romProjSnaps$ \Comment{Induce the desired metric $\metric$ onto the data}
   \State\label{step:SVD} $(\tilde{\mat{U}}, \tilde{\mat{\Sigma}}, \tilde{\mat{V}}) \gets \textsc{SVD}(\tilde{\mat{Y}}_{\perp})$
   \State $\sigma \gets \|\corrector \romSnaps\|_2$ \Comment{Compute the 2-norm $\|\snaps\|_2 = \|\romBasis \romSnaps\|_2 = \|\corrector \romSnaps\|_2$}
	 \State $s \gets \max\{i\,|\, \tilde{\sigma_i} \geq \epsilon \sigma\}$
	 \Comment{Select the maximum index of the set of singular values above the
	 threshold
	 $\epsilon\sigma$}
	 \State $\romCompVecs \gets \romProjSnaps \tilde{\mat{V}}_{:, \nat{s}}
	 (\tilde{\mat{\Sigma}}_{\nat{s}, \nat{s}})^{-1}$ \Comment{Use Eq.\ (\ref{eq:trick}) to avoid ill-conditioning issues.}
      \State \Return $\romCompVecs$
\EndProcedure
\end{algorithmic}
\end{algorithm}

To ensure this algorithm remains online efficient, we must ensure that
computing the metric 
 $\romBasis^T \romBasis$
in Step \ref{step:computeMetric} incurs an
$\stateVecSize$-independent operation count. We now provide a method to
accomplish this.

\subsection{Computing the metric} \label{sec:metric}
We first establish a few notational conveniences. To begin, note that
the orthogonal projection $\proj_{\parentSpace}(\ve{w})$ of any vector $\ve{w}
\in \rootSpace$ onto a subspace $\parentSpace$ can be represented as a linear
operator, which we denote by 
$\projector_\parentSpace$ and define as
\begin{equation}
\projector_\parentSpace \ve{w} \equiv \proj_{\parentSpace}(\ve{w}) \,.
\end{equation}
Note that orthogonal projectors are 
idempotent (i.e.,
$\projector_\parentSpace^2 = \projector_\parentSpace$)
and
self-adjoint (i.e., 
${\projector_\parentSpace}^T = \projector_\parentSpace$).
Moreover, if $\childSpace$ is a subspace of $\parentSpace$, then
$\projector_{\childSpace} \projector_{\parentSpace} = \projector_{\parentSpace}  \projector_{\childSpace} = \projector_{\childSpace}
$,
and if $\childSpace \perp \parentSpace$, then
$\projector_{\childSpace} \projector_{\parentSpace} =
\projector_{\parentSpace}  \projector_{\childSpace} = \mat{0}$.
Further, if $\decomp$ is an orthogonal decomposition of $\parentSpace$, then
\begin{equation} \label{eq:decomp_add}
\projector_\parentSpace = \sum_{\childSpace \in \decomp} \projector_{\childSpace} \,.
\end{equation}
Note that Eq.~\eqref{eq:decomp_add} implies that if $\frontier$ is a frontier
in a $\rootSpace$-refinement tree $\tree$, then
$
\mat{I} = \projector_\rootSpace = \sum_{\childSpace \in \frontier} \projector_\childSpace
$.
By construction, the every column in the basis $\romBasis$ is related to a
vector in the basis $\initBasis \equiv \initBasisIterLast$ through a
projection as
\begin{equation}
\romBasisCol_{\parentSpace} = \projector_{\parentSpace} \initBasisCol_{\globalancestor(\parentSpace)} \,,
\end{equation}
where $\romBasisCol_{\parentSpace}$ denotes 
the $\parentSpace$th column of $\romBasis$ and
\begin{equation} \label{eq:globalancestor}
	\globalancestor : \frontierglobal \longrightarrow \nat{\romSize_0} 
\end{equation}
maps the subspace $\parentSpace\in\frontierglobal$ to
the index of the column of the original basis $\initBasis$ from which
$\romBasisCol_{\parentSpace}$ was sieved such that
$\globalancestor(\parentSpace) = i$ for $\parentSpace \in \frontier_i$.

Now, note that element $(\parentSpace,\childSpace)$ of the metric $\romBasis^T
\romBasis$ 
is given by
\begin{equation}\label{eq:UWinnerprod}
\left[\romBasis^T \romBasis\right]_{\parentSpace,\childSpace}={\romBasisCol_{\parentSpace}}^T \romBasisCol_{\childSpace} = {\initBasisCol_{\globalancestor(\parentSpace)}}^T {\projector_{\parentSpace}}^T {\projector_{\childSpace}} \initBasisCol_{\globalancestor(\childSpace)} = {\initBasisCol_{\globalancestor(\parentSpace)}}^T \left({\projector_{\parentSpace}} {\projector_{\childSpace}}\right) \initBasisCol_{\globalancestor(\childSpace)} \,.
\end{equation}
Moreover, since $\parentSpace$ and $\childSpace$ are derived from an
$\mathbb{R}^{\stateVecSize}$-refinement tree $\tree$, corollary
(\ref{incomparable_prop}) implies that either $\parentSpace \subset
\childSpace$, $\childSpace \subset \parentSpace$, or $\childSpace \perp
\parentSpace$, which in turn implies
\begin{equation}\label{eq:PuPw}
{\projector_{\parentSpace}} {\projector_{\childSpace}} = \begin{cases} \projector_{\parentSpace} & \parentSpace \subset \childSpace \\
\projector_{\childSpace} & \childSpace \subset \parentSpace \\
0 & \text{otherwise}. \end{cases}
\end{equation}
Eqs.~\eqref{eq:UWinnerprod} and \eqref{eq:PuPw} yield
\begin{equation}
{\romBasisCol_{\parentSpace}}^T \romBasisCol_{\childSpace} = \begin{cases} {\initBasisCol_{\globalancestor(\parentSpace)}}^T \projector_{\parentSpace} \initBasisCol_{\globalancestor(\childSpace)}  & \parentSpace \subset \childSpace \\
{\initBasisCol_{\globalancestor(\parentSpace)}}^T \projector_{\childSpace} \initBasisCol_{\globalancestor(\childSpace)} & \childSpace \subset \parentSpace \\
0 & \text{otherwise}, \end{cases}
\end{equation}
which can be written equivalently as
\begin{equation} \label{eq:metric}
\left[\romBasis^T \romBasis\right]_{\parentSpace,\childSpace} = \begin{cases} \left[\initBasis^T \projector_{\parentSpace} \initBasis\right]_{\globalancestor(\parentSpace), \globalancestor(\childSpace)} & \parentSpace \text{ is descendant from } \childSpace \text{ in } \tree \\
\left[\initBasis^T \projector_{\childSpace} \initBasis\right]_{\globalancestor(\parentSpace), \globalancestor(\childSpace)} & \childSpace \text{ is descendant from } \parentSpace \text{ in } \tree \\
0 & \text{otherwise}. \end{cases}
\end{equation}
This expression illuminates two important points:
\begin{enumerate}
\item The metric $\romBasis^T \romBasis$ is sparse. Moreover, the
	sparsity pattern can be efficiently computed from Eq.\ (\ref{eq:metric})
		in $O(\romSize^2 \min(\log \stateVecSize, \romSize))$ operations: for each pair $(\parentSpace,
		\childSpace)\in\frontierglobal\times\frontierglobal$, we
		can compute the common ancestor of $\parentSpace$ and $\childSpace$ in
		$O(\min(\log \stateVecSize, \romSize))$, and
		the entry is zero if the ancestor is neither
		$\parentSpace$ nor $\childSpace$.
\item The matrices $\initBasis^T \projector_{\parentSpace} \initBasis$, which
	we refer to as the \textbf{projected metrics}, have dimension
		$\romSize_0 \times \romSize_0$. Thus, since $\romSize_0 \ll
		\stateVecSize$, we can store a significant number of them in memory.
		Moreover, these projected metrics are additive: if $\decomp$ is an
		orthogonal decomposition of $\parentSpace$, then multiplying Eq.\
		(\ref{eq:decomp_add}) on the left and right by $\initBasis^T$ and 
		$\initBasis$, respectively, yields
\begin{equation}
\initBasis^T \projector_{\parentSpace} \initBasis = \sum_{\childSpace \in \decomp} \initBasis^T \projector_{\childSpace} \initBasis \,.
\end{equation}
In particular, for every non-leaf node, the projected metric at that node is the sum of the projected metrics at its children,
\begin{equation} \label{eq:proj_metric_rec}
\initBasis^T \projector_{\parentSpace} \initBasis = \sum_{\childSpace \in
	\tRefine{\parentSpace}{T}} \initBasis^T \projector_{\childSpace} \initBasis
	\,.
\end{equation}
Hence, projected metrics can be computed recursively. Indeed, if the projected
		metrics are supplied for any frontier in the tree $\tree$, then the
		projected metrics for all vertices in $\tree$ above the frontier can be computed by recursively applying Eq.\ (\ref{eq:proj_metric_rec}).
\end{enumerate}

In light of these two considerations, our algorithm for computing the metric
simply precomputes the projected metrics $\initBasis^T
\projector_{\parentSpace} \initBasis$ for every vertex $\parentSpace$ in the
tree $\tree$. We store these precomputed projected metrics in ``projected-metric attributes.''

\begin{definition}[projected-metric attribute]
We ascribe to every vertex $\parentSpace$ of the
	$\mathbb{R}^\stateVecSize$-refinement tree $\tree$ a \textbf{projected
	metric attribute} $\projMetric_\tree(\parentSpace)$. Initially, all of the
	projected-metric attributes are set to the projected metrics of those
	subspaces, i.e.,
\begin{equation}
\projMetric_\tree(\parentSpace) \gets (\initBasisZ)^T
	\projector_{\parentSpace} \initBasisZ\,.
\end{equation}
\end{definition}
The culmination of all of the above machinery yields algorithm
(\ref{alg:metric}). Some technical considerations prevent us from giving the
full implementation of the procedure $\textsc{GetProjectedMetric}$ as of yet,
but for now, think of $\textsc{GetProjectedMetric}(\tree,
\childSpace)$ as simply returning the projected-metric attribute
$\projMetric_\tree(\childSpace)$ stored at $\childSpace$. The procedure
$\textsc{GetGlobalAncestorMap}(\frontierglobal)$ is simple to implement with
proper bookkeeping, as it returns
the mapping from any element of the global frontier $\parentSpace
\in \frontierglobal$ to the index $i$ of the original basis vector $\initBasisCol_i$
from which it was sieved.

\begin{algorithm}
\caption{Metric Computation} \label{alg:metric}
\hspace*{\algorithmicindent} \textbf{Input}: The $\mathbb{R}^n$-refinement
	tree $\tree$, and the current global frontier $\frontierglobal$. \\
\hspace*{\algorithmicindent} \textbf{Output}: The metric $\metric = \romBasis^T \romBasis$.
\begin{algorithmic}[1]
\Procedure{ComputeMetric}{$\tree, \frontierglobal$}
   \State $\metric \gets \mat{0} \in \mathbb{R}^{\frontierglobal \times
	 \frontierglobal}$
   \State $\globalancestor \gets \textsc{GetGlobalAncestorMap}(\frontierglobal)$
   \For{$\parentSpace, \childSpace \in \frontierglobal$}
   		\State $\mathbb{A} \gets \textsc{GetCommonAncestor}(\tree, \parentSpace, \childSpace)$
   		\If{$\mathbb{A} = \parentSpace$} \Comment{In this case, $\childSpace \subset \parentSpace$}
   			\State $\metric^{\proj} \gets \textsc{GetProjectedMetric}(\tree, \childSpace)$ \Comment{Retrieve $\initBasis^T \projector_\childSpace \initBasis$. See full implementation in algorithm \ref{alg:get_proj_metric}.}
        		\State $\metric_{\parentSpace,\childSpace} \gets \metric^{\proj}_{\globalancestor(\parentSpace), \globalancestor(\childSpace)}$ \Comment{Compute the $\parentSpace, \childSpace$ entry of $\metric$. See Eq.\ (\ref{eq:metric})}
        \ElsIf{$\mathbb{A} = \childSpace$} \Comment{In this case, $\parentSpace \subset \childSpace$}
        \State $\metric^{\proj} \gets \textsc{GetProjectedMetric}(\tree, \parentSpace)$ \Comment{Retrieve $\initBasis^T \projector_\parentSpace \initBasis$. See full implementation in algorithm \ref{alg:get_proj_metric}.}
        		\State $\metric_{\parentSpace,\childSpace} \gets \metric^{\proj}_{\globalancestor(\parentSpace), \globalancestor(\childSpace)}$ \Comment{Compute the $\parentSpace, \childSpace$ entry of $\metric$. See Eq.\ (\ref{eq:metric})}
        \EndIf
      \EndFor
   \State \textbf{return} $\metric$
\EndProcedure
\end{algorithmic}
\end{algorithm}

\subsection{The meet of frontiers}\label{sec:meetFrontiers}

Because the original basis vectors $\initBasisCol_i$ are sieved
through distinct frontiers $\frontier_i$, comparing the resulting columns of 
$\romBasis$ is difficult. In the coming section, we will require a way of
comparing the vectors of $\romBasis$ using a
single frontier of the refinement tree $\tree$. Thus, in this section, we
describe how to find a
frontier $\bigwedge_{i = 1}^m \frontier_i$ in the tree $\tree$ that satisfies
$\bigwedge_{i = 1}^m \frontier_i \preceq \frontier_i$ for all $i \in
\nat{\romSize_0}$, but is still as coarse as possible. These considerations
motivate the following definition.
\begin{definition}[frontier meet] \label{def:meet1}
The \textbf{meet} (i.e., largest lower bound) of a collection of frontiers
	$\frontier_1, \ldots, \frontier_m$ of a $\rootSpace$-refinement tree $\tree$
	is denoted by $\bigwedge_{i = 1}^m \frontier_i$ and is defined by the
	following
	properties:
\begin{enumerate}
\item $\bigwedge_{i = 1}^m \frontier_i$ is a frontier.
\item $\bigwedge_{i = 1}^m \frontier_i$ is a lower bound for $\{ \frontier_1,
	\ldots, \frontier_m \}$. That is, $\bigwedge_{i = 1}^m \frontier_i \preceq
		\frontier_j$ for all $j \in \nat{m}$. Or in other words, all frontiers $\frontier_j$ can be refined to $\bigwedge_{i = 1}^m \frontier_i$.
\item $\bigwedge_{i = 1}^m \frontier_i$ is coarser than all other lower bounds
	for $\{ \frontier_1, \ldots, \frontier_m \}$. That is, if $\frontierH \preceq
		\frontier_j$ for all $j \in \nat{m}$, then $\frontierH \preceq \bigwedge_{i = 1}^m \frontier_i$.
\end{enumerate}
\end{definition}

However, this definition is nonconstructive and so the uniqueness and existence of the meet is left in question. Hence, we will adopt an alternate constructive definition and prove in the appendix that the two are equivalent.

\begin{definition}[frontier meet] \label{def:meet2}
The \textbf{meet} (i.e., largest lower bound) of a collection of frontiers $\frontier_1, \ldots \frontier_m$ of a $\rootSpace$-refinement tree $\tree$ is denoted and defined as
\begin{equation}
\bigwedge_{i = 1}^m \frontier_i = \{ \parentSpace_1 \cap \parentSpace_2 \cap \ldots \cap \parentSpace_m \mid \parentSpace_j \in \frontier_j, \, \parentSpace_1 \cap \parentSpace_2 \cap \ldots \cap \parentSpace_m \neq 0 \} \,.
\end{equation}
\end{definition}
\noindent That is, the meet is the set of all nontrivial intersections between the elements of each of the frontiers $\frontier_i$. 
To gain a more workable characterization of the meet of frontiers, we
introduce the following propositions.

\begin{restatable}{proposition}{propmeet} \label{prop:meet}
The meet $\bigwedge_i \frontier_i$ of a collection of frontiers
	$\frontier_1,\ldots, \frontier_m$ of a $\rootSpace$-refinement tree $\tree
	\equiv (\treeNodes, \treeEdges)$ is also a frontier.
\end{restatable}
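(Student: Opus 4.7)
The plan is to verify directly that the set
$\bigwedge_{i=1}^m \frontier_i = \{ \parentSpace_1 \cap \cdots \cap \parentSpace_m \mid \parentSpace_j \in \frontier_j,\; \parentSpace_1 \cap \cdots \cap \parentSpace_m \neq 0 \}$
satisfies the three properties required of a frontier: its elements are vertices of $\tree$, they are pairwise orthogonal, and they sum to $\rootSpace$. I would then invoke Proposition \ref{prop:character_frontiers} as a convenient device for the last point.

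First I would establish a structural lemma: for any tuple $(\parentSpace_1,\ldots,\parentSpace_m)$ with $\parentSpace_j \in \frontier_j$, the intersection $\parentSpace_1 \cap \cdots \cap \parentSpace_m$ is either $\{0\}$ or equals one of the $\parentSpace_j$'s. This follows from Corollary \ref{incomparable_prop}: for any pair $\parentSpace_i,\parentSpace_j \in \treeNodes$, either one is contained in the other or they are orthogonal. Applying this pairwise, if any two of the $\parentSpace_j$ are orthogonal then the total intersection is trivial; otherwise the $\parentSpace_j$ are totally ordered by inclusion, and the intersection is simply the minimum element, which is itself a vertex of $\tree$. This handles membership in $\treeNodes$.

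Next I would verify orthogonality. Take two distinct meet elements $A = \parentSpace_1 \cap \cdots \cap \parentSpace_m$ and $B = \childSpace_1 \cap \cdots \cap \childSpace_m$ with $A,B \neq \{0\}$. Since $A \neq B$ but both are nontrivial intersections drawn from the same tuple of frontiers, there must exist some index $i$ with $\parentSpace_i \neq \childSpace_i$. Because $\frontier_i$ is an orthogonal decomposition, $\parentSpace_i \perp \childSpace_i$, and since $A \subset \parentSpace_i$ and $B \subset \childSpace_i$ we conclude $A \perp B$.

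For the sum-to-$\rootSpace$ property, I would use Proposition \ref{prop:character_frontiers} and show that every leaf $\leafSpace\in\treeLeaves$ is a descendant of exactly one element of the meet. For existence, Proposition \ref{prop:character_frontiers} applied to each $\frontier_i$ produces a unique $\parentSpace_i\in\frontier_i$ with $\leafSpace\subset\parentSpace_i$; thus $\leafSpace\subset\parentSpace_1\cap\cdots\cap\parentSpace_m$, so this intersection is nontrivial and is a meet element containing $\leafSpace$. For uniqueness, suppose $\leafSpace$ is contained in two meet elements $A$ and $B$; since neither is trivial and they both contain the nontrivial subspace $\leafSpace$, they cannot be orthogonal, so by the orthogonality step $A=B$. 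Proposition \ref{prop:character_frontiers} then confirms that $\bigwedge_i\frontier_i$ is a frontier. The main obstacle is just being careful in the structural lemma — once one sees that incomparability forces orthogonality, everything collapses to a short case analysis, but the argument must be presented in a way that clearly rules out nontrivial intersections outside $\treeNodes$.
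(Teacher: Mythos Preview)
Your proof is correct and follows essentially the same approach as the paper's: the paper likewise establishes (i) that each nontrivial intersection equals some $\parentSpace_j$ (hence lies in $\treeNodes$), (ii) pairwise orthogonality via a differing index $j$, and (iii) that every leaf lies under some meet element. The only packaging difference is that you invoke Proposition~\ref{prop:character_frontiers} to conclude frontier-ness, whereas the paper verifies the orthogonal-decomposition definition directly; the underlying arguments are identical.
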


\begin{restatable}{proposition}{propmeetsub} \label{prop:meetsub}
Every element of the meet $\bigwedge_i \frontier_i$ is an element of some $\frontier_i$, i.e., $\bigwedge_i \frontier_i \subset \bigcup_i \frontier_i$.
\end{restatable}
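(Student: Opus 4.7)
The plan is to take an arbitrary element $\childSpace$ of $\bigwedge_i \frontier_i$, write it by Definition \ref{def:meet2} as a nontrivial intersection $\childSpace = \parentSpace_1 \cap \parentSpace_2 \cap \ldots \cap \parentSpace_m$ with $\parentSpace_i \in \frontier_i$, and show that this intersection coincides with one of its constituents, which will immediately yield $\childSpace \in \frontier_{i^*} \subset \bigcup_i \frontier_i$.

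The key observation is that the vertices of the refinement tree are pairwise very well-behaved: by Corollary \ref{incomparable_prop}, for any two $\parentSpace_i, \parentSpace_j \in \treeNodes$ we have either $\parentSpace_i \subset \parentSpace_j$, or $\parentSpace_j \subset \parentSpace_i$, or $\parentSpace_i \perp \parentSpace_j$. I will rule out the orthogonal case: because $\childSpace \neq \{0\}$, there is a nonzero vector in $\childSpace \subset \parentSpace_i \cap \parentSpace_j$, so $\parentSpace_i$ and $\parentSpace_j$ cannot be orthogonal (since proper subspaces in $\treeNodes$ have dimension at least one, as noted in the proof of Proposition \ref{subset_proposition}).

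Consequently, for every pair $(i, j)$ with $i \neq j$, one of $\parentSpace_i, \parentSpace_j$ is a subspace of the other. Thus the collection $\{\parentSpace_1, \ldots, \parentSpace_m\}$ is totally ordered by set inclusion, and in any totally ordered finite collection of sets the intersection equals the minimum element. Hence there exists $i^* \in \nat{m}$ such that $\parentSpace_{i^*} \subset \parentSpace_j$ for all $j$, giving
\begin{equation}
\childSpace = \parentSpace_1 \cap \parentSpace_2 \cap \ldots \cap \parentSpace_m = \parentSpace_{i^*} \in \frontier_{i^*} \subset \bigcup_i \frontier_i \,,
\end{equation}
which is the desired conclusion.

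I expect no major obstacle here; the proof is essentially immediate once one realizes that Corollary \ref{incomparable_prop} upgrades pairwise compatibility (nontrivial intersection) into a total order on the $\parentSpace_i$. The only subtlety worth being careful about is justifying that orthogonality is ruled out by nontriviality of the intersection, which relies on the fact (implicit in the refinement tree definition) that every vertex has positive dimension.
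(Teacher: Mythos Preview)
Your proof is correct and follows essentially the same approach as the paper. The paper's proof cites the pairwise fact (established within the proof of Proposition \ref{prop:meet}) that for any two $\parentSpace_1,\parentSpace_2 \in \treeNodes$ the intersection $\parentSpace_1 \cap \parentSpace_2$ equals $\parentSpace_1$, $\parentSpace_2$, or $0$, and then concludes by induction that a nontrivial $m$-fold intersection equals one of its constituents; you derive the same pairwise fact directly from Corollary \ref{incomparable_prop} and phrase the induction as ``the $\parentSpace_i$ are totally ordered by inclusion, hence the intersection is the minimum.''
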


The above proposition (\ref{prop:meetsub}) can be used to prove that the constructive definition (\ref{def:meet2}) and the prescriptive definition (\ref{def:meet1}) are in fact equivalent.

\begin{restatable}{proposition}{proplargestlowerbound} \label{prop:lower_bound}
The meet $\bigwedge_i \frontier_i$ is the largest lower bound of the
	$\frontier_i$'s. That is, $\bigwedge_i \frontier_i \preceq \frontier_j$ for
	all $j \in\nat{m}$ and for any frontier $\frontierH$ such that $\frontierH
	\preceq \frontier_i$ for all $i \in \nat{m}$, we have that $\frontierH\preceq \bigwedge_i \frontier_i$.
\end{restatable}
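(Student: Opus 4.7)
The plan is to verify both defining conditions of the largest lower bound separately, leveraging the constructive definition (\ref{def:meet2}) together with Proposition \ref{prop:meet} (which already guarantees that $\bigwedge_i \frontier_i$ is a frontier).

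First I would prove that $\bigwedge_i \frontier_i \preceq \frontier_j$ for each $j \in \nat{m}$. Pick any $\childSpace \in \bigwedge_i \frontier_i$. By Definition \ref{def:meet2}, we can write $\childSpace = \parentSpace_1 \cap \parentSpace_2 \cap \ldots \cap \parentSpace_m$ with $\parentSpace_i \in \frontier_i$ and $\childSpace \neq 0$. In particular $\childSpace \subset \parentSpace_j$, which by definition of $\preceq$ establishes $\bigwedge_i \frontier_i \preceq \frontier_j$. This half is immediate from the construction.

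Next I would prove that $\bigwedge_i \frontier_i$ is the \emph{largest} such lower bound, which is the substantive part. Let $\frontierH$ be any frontier with $\frontierH \preceq \frontier_i$ for all $i \in \nat{m}$, and fix $\mathbb{H} \in \frontierH$. For each $i$, the hypothesis $\frontierH \preceq \frontier_i$ supplies some $\parentSpace_i \in \frontier_i$ with $\mathbb{H} \subset \parentSpace_i$. Hence
\begin{equation}
\mathbb{H} \subset \parentSpace_1 \cap \parentSpace_2 \cap \ldots \cap \parentSpace_m \,.
\end{equation}
Because $\mathbb{H}$ is a nontrivial subspace (all vertices of $\tree$ are nontrivial subspaces by the definition of an orthogonal decomposition), the intersection on the right is itself nontrivial, and therefore belongs to $\bigwedge_i \frontier_i$ by Definition \ref{def:meet2}. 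Thus every $\mathbb{H} \in \frontierH$ sits inside some element of $\bigwedge_i \frontier_i$, which is precisely the statement $\frontierH \preceq \bigwedge_i \frontier_i$.

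I expect no serious obstacle here: once one reads off the constructive definition, both directions are essentially a one-line containment argument. The only subtlety worth stating explicitly is the nontriviality of $\mathbb{H}$, which is what lets us assert that the intersection $\parentSpace_1 \cap \ldots \cap \parentSpace_m$ actually qualifies as a member of the meet rather than being discarded by the $\neq 0$ condition in Definition \ref{def:meet2}. This also implicitly shows the equivalence of the two meet definitions promised earlier in the text, since uniqueness of the largest lower bound (a standard lattice-theoretic fact) then matches the constructive object to the prescriptive one.
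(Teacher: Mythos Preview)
Your proof is correct and follows essentially the same approach as the paper: both directions are handled by the same one-line containment arguments from the constructive Definition~\ref{def:meet2}. Your explicit remark about the nontriviality of $\mathbb{H}$ (ensuring the intersection actually lies in the meet) is a detail the paper leaves implicit, but otherwise the arguments coincide.
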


Finally, we provide a characterization of the meet which can be used for efficient computation.

\begin{restatable}{proposition}{propnodescendants} \label{prop:meet}
The meet $\bigwedge_i \frontier_i$ is the subset of all elements $\parentSpace
	\in \bigcup_i \frontier_i$ that have no descendants in $\left(\bigcup_i \frontier_i\right) \setminus \parentSpace$.
\end{restatable}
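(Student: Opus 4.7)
My plan is to set $\mathcal{N}\equiv\{\,\parentSpace\in\bigcup_i\frontier_i \,:\, \parentSpace \text{ has no descendant in } (\bigcup_i\frontier_i)\setminus\{\parentSpace\}\,\}$ and to establish the two inclusions $\bigwedge_i\frontier_i\subset\mathcal{N}$ and $\mathcal{N}\subset\bigwedge_i\frontier_i$ separately, working from the constructive Definition \ref{def:meet2} of the meet and from the structural results already proved.

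For the first inclusion, I will take an arbitrary $\parentSpace\in\bigwedge_i\frontier_i$. Proposition \ref{prop:meetsub} places $\parentSpace$ in $\bigcup_i\frontier_i$. I then argue by contradiction: suppose some $\childSpace\in(\bigcup_i\frontier_i)\setminus\{\parentSpace\}$ is a descendant of $\parentSpace$, i.e.\ $\childSpace\subsetneq\parentSpace$ by Proposition \ref{subset_proposition}, and say $\childSpace\in\frontier_j$. Proposition \ref{prop:lower_bound} gives $\bigwedge_i\frontier_i\preceq\frontier_j$, so there exists $\mathbb{Y}\in\frontier_j$ with $\parentSpace\subset\mathbb{Y}$; chaining yields $\childSpace\subsetneq\mathbb{Y}$. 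But distinct elements of the orthogonal decomposition $\frontier_j$ are orthogonal and hence incomparable by Corollary \ref{incomparable_prop}, contradicting $\childSpace\subsetneq\mathbb{Y}$.

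For the reverse inclusion, I will take $\parentSpace\in\mathcal{N}$ and say $\parentSpace\in\frontier_{i_0}$. Because $\treeLeaves$ spans $\rootSpace$ by orthogonal decomposition of the root, $\parentSpace$ contains at least one leaf $\leafSpace\subset\parentSpace$. For each $i$, Proposition \ref{prop:character_frontiers} supplies a unique $\parentSpace_i\in\frontier_i$ with $\leafSpace\subset\parentSpace_i$; then $\{0\}\neq\leafSpace\subset\parentSpace\cap\parentSpace_i$ and Corollary \ref{incomparable_prop} force $\parentSpace$ and $\parentSpace_i$ to be comparable. A strict inclusion $\parentSpace_i\subsetneq\parentSpace$ would exhibit a descendant of $\parentSpace$ in $\bigcup_j\frontier_j\setminus\{\parentSpace\}$, violating $\parentSpace\in\mathcal{N}$, so $\parentSpace\subset\parentSpace_i$ for every $i$. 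Choosing $\parentSpace_{i_0}=\parentSpace$ (permissible since $\parentSpace\in\frontier_{i_0}$) then pins the intersection: $\parentSpace\subset\parentSpace_1\cap\cdots\cap\parentSpace_m\subset\parentSpace_{i_0}=\parentSpace$, and Definition \ref{def:meet2} gives $\parentSpace\in\bigwedge_i\frontier_i$.

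The main obstacle I anticipate is keeping the tree-descendant relation and the subspace-inclusion relation aligned across multiple different frontiers; once one consistently uses Corollary \ref{incomparable_prop} as the bridge between orthogonality and incomparability, and uses the leaf-covering characterization in Proposition \ref{prop:character_frontiers} to produce witness elements in each $\frontier_i$, the remaining steps reduce to short inclusion chases and the argument hinges entirely on the lower-bound property of the meet.
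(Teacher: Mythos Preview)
Your proof is correct. The forward inclusion $\bigwedge_i\frontier_i\subset\mathcal{N}$ is essentially identical to the paper's: both assume a proper descendant $\childSpace\in\frontier_j$ of some $\parentSpace\in\bigwedge_i\frontier_i$, invoke Proposition~\ref{prop:lower_bound} to get $\bigwedge_i\frontier_i\preceq\frontier_j$, and derive a contradiction with the orthogonality of distinct elements of $\frontier_j$.

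The reverse inclusion is where you diverge. The paper argues abstractly: given $\parentSpace\in\frontier_j$ with no proper descendants in $\bigcup_i\frontier_i$, it again uses $\bigwedge_i\frontier_i\preceq\frontier_j$ (plus the surjectivity of the ancestor map from Proposition~\ref{prop:ancestor}) to produce some $\childSpace\in\bigwedge_i\frontier_i\subset\bigcup_i\frontier_i$ with $\childSpace\subset\parentSpace$, and then the ``no proper descendants'' hypothesis forces $\childSpace=\parentSpace$. You instead work constructively from Definition~\ref{def:meet2}: pick a leaf in $\parentSpace$, use Proposition~\ref{prop:character_frontiers} to find its covering element $\parentSpace_i$ in each $\frontier_i$, rule out $\parentSpace_i\subsetneq\parentSpace$ via the hypothesis, and exhibit $\parentSpace$ explicitly as $\parentSpace_1\cap\cdots\cap\parentSpace_m$. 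Both arguments are short; the paper's is slicker (it reuses the same lower-bound lemma twice), while yours has the virtue of touching the constructive definition directly and not relying on the implicit surjectivity of the ancestor map.
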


In light of the above, one can compute the meet $\bigwedge_i \frontier_i$ of a
collection of frontiers by performing an upward flood-fill of the tree $\tree$
starting at the vertices in $\frontier_i$. Afterwards, we can exact the meet
$\bigwedge_i \frontier_i$ by removing all of the vertices in $\bigcup_i
\frontier_i$ that were marked during this flood-fill process. 

\begin{algorithm}
\caption{Meet Computation via Flood-Fill} \label{alg:meet}
\hspace*{\algorithmicindent} \textbf{Input}: A collection of frontiers $\frontier_1, \ldots, \frontier_m$ in a $\rootSpace$-refinement tree $\tree$. \\
\hspace*{\algorithmicindent} \textbf{Output}: The meet $\bigwedge_i \frontier_i$ of the frontiers $\frontier_1, \ldots, \frontier_m$.
\begin{algorithmic}[1]
\Procedure{ComputeMeet}{$\tree, \frontier_1, \ldots, \frontier_m$}
   \State $M \gets \emptyset$ \Comment{The set of \textbf{marked} vertices.}
   \State $U \gets \bigcup_i \frontier_i$ \Comment{The union of all frontiers.}
   \For{$\parentSpace \in U$} \Comment{Iterate through all elements of the union $\bigcup_i \frontier_i$.}
   \State $v \gets \parentSpace$
	\While{$v \neq \rootSpace$} 
		\State $v \gets \textsc{Parent}(v)$ \Comment{Traverse up the tree}
		\If{$v \in M$}
			\State \textbf{break} \Comment{If this vertex was already marked, there is no need to continue up the tree any further.}
		\Else
			\State $M \gets M \cup v$ \Comment{Otherwise, mark this vertex and continue on.}
		\EndIf
	\EndWhile
   \EndFor
   \State \textbf{return} $U \setminus M$
\EndProcedure
\end{algorithmic}
\end{algorithm}

Algorithm (\ref{alg:meet}) outlines the full procedure. A quick inspection of
the algorithm reveals that, at termination, the set of marked vertices $M$
will contain all strict ancestors in $\tree$ of vertices in $\bigcup_i
\frontier_i$. Taking the set difference $\bigcup_i \frontier_i \setminus M$
thus gives the desired result by proposition (\ref{prop:meet}).

\subsection{Updating the projected metrics} \label{sec:update_proj_metrics}

A fundamental problem that we have not yet resolved arises from the fact that 
every time the basis is updated from 
$\initBasisIterLast$ to $\initBasisIter$
during the $r$th compression,
the projected-metric attributes
$\projMetric_\tree(\parentSpace)$ 
at each vertex in $\tree$ become invalid, as they are precomputed to be 
equal to
$(\initBasisIterLast)^T
\projector_\parentSpace \initBasisIterLast$.
To
address this issue, we derive a relationship between the projected metrics
$(\initBasisIter)^T \projector_\parentSpace \initBasisIter$  and $(\initBasisIterLast)^T
\projector_\parentSpace \initBasisIterLast$. Let
$\frontierBefore_1, \ldots, \frontierBefore_{\initBasisSizeBefore}$ denote the
frontiers before the $r$th compression, which are associated with initial
basis 
$\initBasisIterLast$
and refined basis
$\romBasis$, and define the frontier meet as
\begin{equation}
\frontierMeet \equiv \bigwedge_i \frontierBefore_i\,.
\end{equation}
We now establish a useful structural property of frontiers.
\begin{restatable}{proposition}{proponbelowabove} \label{prop:onbelowabove}
If $\frontier$ is a frontier of a $\rootSpace$-refinement tree $\tree \equiv (\treeNodes, \treeEdges)$, then for every $\parentSpace \in \treeNodes$, exactly one of the following is true:
\begin{enumerate}
\item $\parentSpace$ is \textbf{on} the frontier $\frontier$, i.e.,
	$\parentSpace \in \frontier$,
\item $\parentSpace$ is \textbf{below} the frontier $\frontier$, i.e., there
	exists $\childSpace \in \frontier$ such that $\parentSpace \subsetneq
		\childSpace$, or
\item  $\parentSpace$ is \textbf{above} the frontier $\frontier$, i.e., there exists $\childSpace \in \frontier$ such that $\childSpace \subsetneq \parentSpace$.
\end{enumerate} 
\end{restatable}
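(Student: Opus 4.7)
My plan is to first establish the existence of the classification (i.e., every vertex belongs to at least one of the three categories), and then establish mutual exclusivity. The main tool throughout will be Corollary \ref{incomparable_prop}, which dichotomizes any pair of tree vertices into either ``one contains the other'' or ``orthogonal.''

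For existence, I would fix an arbitrary $\parentSpace \in \treeNodes$ and compare it to each element $\childSpace \in \frontier$ using Corollary \ref{incomparable_prop}. For each $\childSpace$, exactly one of the following holds: $\parentSpace = \childSpace$, $\parentSpace \subsetneq \childSpace$, $\childSpace \subsetneq \parentSpace$, or $\parentSpace \perp \childSpace$. The first three cases immediately place $\parentSpace$ as on, below, or above the frontier respectively, so the only danger is if $\parentSpace \perp \childSpace$ for \emph{every} $\childSpace \in \frontier$. I would rule this out by noting that since $\frontier$ is an orthogonal decomposition of $\rootSpace$, orthogonality to every element of $\frontier$ implies $\parentSpace \perp \rootSpace$; because $\parentSpace \subset \rootSpace$, this forces $\parentSpace = \{0\}$, which contradicts the refinement-tree property that every vertex has dimension at least one (an easy consequence of the leaf-dimension property and downward-inclusion argument, as used in the commented proofs of Propositions \ref{subset_proposition} and \ref{desc_proposition}).

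For mutual exclusivity, I would check three pairwise incompatibilities. If $\parentSpace$ is simultaneously on and below, then $\parentSpace \in \frontier$ and $\parentSpace \subsetneq \childSpace$ for some distinct $\childSpace \in \frontier$; these are two distinct elements of the orthogonal decomposition $\frontier$, so $\parentSpace \perp \childSpace$, yet $\parentSpace \subset \childSpace$ with $\dim(\parentSpace) \geq 1$ produces the contradiction $\parentSpace \perp \parentSpace$. The on-versus-above case is symmetric. Finally, if $\parentSpace$ is both above and below, then there exist $\childSpace_1, \childSpace_2 \in \frontier$ with $\childSpace_1 \subsetneq \parentSpace \subsetneq \childSpace_2$, giving $\childSpace_1 \subsetneq \childSpace_2$; these are again distinct elements of $\frontier$, hence orthogonal, and the same dimensionality argument yields a contradiction.

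The only real subtlety---and the step I expect to receive the closest scrutiny---is ruling out the ``orthogonal to every element of $\frontier$'' scenario in the existence step, since it relies implicitly on the fact that no vertex of the tree is the zero subspace. This follows from Property \ref{prop:leavesDimOne} together with the fact that every vertex has a leaf descendant (which can be established by a straightforward induction on the tree's finite depth, or equivalently by noting that $\parentSpace \supset \leafSpace$ for any leaf $\leafSpace$ reachable by repeatedly descending into some child), so $\dim(\parentSpace) \geq 1$ for all $\parentSpace \in \treeNodes$. Once this is in hand, the rest of the argument is essentially bookkeeping on the four-way dichotomy.
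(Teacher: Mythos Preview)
Your proof is correct. The mutual-exclusivity portion is essentially identical to the paper's argument: both of you chain the strict inclusions to obtain two distinct frontier elements related by strict containment, then invoke orthogonality of distinct frontier elements and nontriviality of the subspaces to derive a contradiction.

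The existence portion, however, takes a genuinely different route from the paper. The paper's argument is structural: it picks a leaf $\leafSpace$ descendant from $\parentSpace$, appeals to Proposition~\ref{prop:character_frontiers} to find the unique $\childSpace \in \frontier$ with $\leafSpace$ as a descendant, and then observes that $\parentSpace$ and $\childSpace$ both lie on the root-to-$\leafSpace$ path, forcing one to be an ancestor of the other (hence $\parentSpace \subset \childSpace$ or $\childSpace \subset \parentSpace$ by Proposition~\ref{subset_proposition}). Your argument is analytic: you rule out the degenerate case ``$\parentSpace$ is orthogonal to every frontier element'' directly from the fact that $\frontier$ sums to $\rootSpace$, so such a $\parentSpace$ would satisfy $\parentSpace \perp \rootSpace$ and hence be zero. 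Your route is slightly more self-contained in that it avoids invoking the frontier characterization (Proposition~\ref{prop:character_frontiers}) and instead uses only the raw definition of orthogonal decomposition plus Corollary~\ref{incomparable_prop}; the paper's route, on the other hand, makes the tree-path geometry explicit and yields the comparable $\childSpace$ constructively. Both are clean; yours is marginally shorter.
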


We now decompose the task of relating the projected metrics
$(\initBasisIter)^T \projector_\parentSpace \initBasisIter$ and
$(\initBasisIterLast)^T \projector_\parentSpace \initBasisIterLast$ into the
three cases of proposition (\ref{prop:onbelowabove}).

\begin{enumerate}
	\item Suppose $\parentSpace$ is \textbf{on} the frontier meet $\frontierMeet$, i.e.
		$\parentSpace \in \frontierMeet$. By proposition (\ref{prop:ancestor}) and
		the fact that $\frontierMeet \preceq \frontierBefore_i$ by definition
		(\ref{def:meet1}), there exists a unique ancestor map from
		$\frontierMeet$ to each of the frontiers $\frontierBefore_i$, which we
		denote by
	\begin{equation}
	\ancestor_i : \frontierMeet \longrightarrow \frontierBefore_i \,.
	\end{equation}
	To understand the structure of the new projected metrics $(\initBasisIter)^T
		\projector_\parentSpace \initBasisIter$, we first consider the matrix
		$\projector_\parentSpace \initBasisIter$. Note that
	\begin{equation}
	\projector_\parentSpace \initBasisIter = \projector_\parentSpace \romBasis
		\romInitBasisIter\,,
	\end{equation}
		where $\romInitBasisIter\in \RR{\bigsqcup_i \frontierBefore_i \times
\nat{\romSize^{(\compTime)}_0}}$ denotes the representation of 
	$\initBasisIter$	in the coordinates of basis $\romBasis$.
		The $i$th column of this matrix is given by
		\begin{equation}\label{eq:projParentDecomp}
	\projector_\parentSpace \initBasisColIter_i = \projector_\parentSpace
	\romBasis \romInitBasisColIter_i = \sum_{\childSpace \in \bigsqcup_j
	\frontierBefore_j } \projector_{\parentSpace} \romBasisCol_{\childSpace}
	\romInitBasisIter_{\childSpace, i} = \sum_{\childSpace \in \bigsqcup_j
	\frontierBefore_j }  \projector_{\parentSpace} \left(\projector_\childSpace
	\initBasisColIterBefore_{\globalancestor(\childSpace)}\right)
	\romInitBasisIter_{\childSpace, i}\,,
\end{equation}
where we recall 
		from Eq.\ (\ref{eq:globalancestor})
		that $\globalancestor : \bigsqcup_i \frontierBefore_i
		\longrightarrow \nat{\initBasisSizeBefore}$, and we have used the fact that
		$\romBasisCol_{\childSpace}$ is the projection of the basis vector
		$\initBasisColIterBefore_{\globalancestor(\childSpace)}$ into the space
		$\childSpace$. We can write Eq.~\eqref{eq:projParentDecomp} equivalently
		as
\begin{equation} \label{eq:projector_new}
\projector_\parentSpace \initBasisColIter_i = \sum_j \sum_{\childSpace \in
	\frontierBefore_j } \left(\projector_{\parentSpace}
	\projector_\childSpace\right)
	\initBasisColIterBefore_{j}
	\romInitBasisIter_{\childSpace, i} \,,
\end{equation}
where we have used $\globalancestor(\childSpace) = j$ for
		$\childSpace\in\frontierBefore_j$.
The advantage of choosing $\parentSpace$ to be on the frontier $\frontierMeet$
		becomes clear in considering the projector product
		$\projector_{\parentSpace} \projector_{\childSpace}$. Because
		$\frontierMeet \preceq \frontierBefore_j$ for all $j$, 
		$\ancestor_j(\parentSpace) \in \frontierBefore_j$ is the unique vertex in
		$\frontierBefore_j$ with the property that $\parentSpace \subset
		\ancestor_j(\parentSpace)$ and $\parentSpace \perp \childSpace$ for all
		other $\childSpace \in \frontierBefore_j$. Therefore, for $\childSpace
		\in \frontierBefore_j$, we have
		\begin{equation}\label{eq:vertexOnMeetresult}
\projector_{\parentSpace} \projector_\childSpace = \begin{cases}
\projector_{\parentSpace} & \childSpace = \ancestor_j(\parentSpace) \\
\zero & \text{otherwise}.
\end{cases}
\end{equation}
Substituting Eq.~\eqref{eq:vertexOnMeetresult} in Eq.~\eqref{eq:projector_new}
yields
\begin{equation}
\projector_\parentSpace \initBasisColIter_i = \sum_j  \projector_{\parentSpace} \initBasisColIterBefore_{j} \romInitBasisIter_{\ancestor_j(\parentSpace), i} \,.
\end{equation}
In matrix form, this becomes
\begin{equation} \label{eq:proj_coeffs}
\projector_\parentSpace \initBasisIter = \projector_\parentSpace \initBasisIterLast \compressionCoeffs{\parentSpace} \,,
\end{equation}
where $\compressionCoeffs{\parentSpace} \in \R{\initRomSizeIterLast \times \initRomSizeIter}$ is the matrix with entries
\begin{equation} \label{eq:conj_matrix}
\compressionCoeffs{\parentSpace}_{j,i} \equiv \romInitBasisIter_{\ancestor_j(\parentSpace), i}  \,.
\end{equation}
Now, we can derive the following relationship between the projected metrics
$(\initBasisIter)^T \projector_\parentSpace \initBasisIter$ and
$(\initBasisIterLast)^T \projector_\parentSpace \initBasisIterLast$:
\begin{equation} \label{eq:update_eq}
\begin{split}
(\initBasisIter)^T \projector_\parentSpace \initBasisIter =
	(\projector_\parentSpace \initBasisIter)^T (\projector_\parentSpace
	\initBasisIter) = (\projector_\parentSpace \initBasisIterLast
	\compressionCoeffs{\parentSpace})^T ( \projector_\parentSpace
	\initBasisIterLast \compressionCoeffs{\parentSpace}) \\
	=
	(\compressionCoeffs{\parentSpace})^T \left[ (\initBasisIterLast)^T
	\projector_\parentSpace \initBasisIterLast\right] \compressionCoeffs{\parentSpace} \,.
\end{split}
\end{equation}
where we have used the idempotent and self-adjoint properties of projectors.
Therefore, we see that, \textbf{for a space $\parentSpace$ on the frontier
meet $\frontierMeet$}, \textit{the projected metrics before and after compression are related by conjugation by the matrix $\compressionCoeffs{\parentSpace}$ in Eq.\ (\ref{eq:conj_matrix}).} This concludes the first case.

\item Suppose $\parentSpace$ is \textbf{below} the frontier meet $\frontierMeet$, i.e., there exists a $\childSpace \in \frontierMeet$ such that $\parentSpace \subsetneq \childSpace$.  In this case, the properties of projectors give us
\begin{equation} \label{eq:projector_sqeeze}
\projector_\parentSpace \projector_\childSpace = \projector_\parentSpace \,.
\end{equation}
Using this property, and the idempotent and self-adjoint nature of projectors,
we have
\begin{equation}\label{eq:projector_sqeeze_two}
\projector_\childSpace^T \projector_\parentSpace \projector_\childSpace = \projector_\parentSpace \,.
\end{equation}
Hence, we can write
\begin{equation} \label{eq:expand_proj_metric}
(\initBasisIter)^T \projector_\parentSpace \initBasisIter = (\projector_\childSpace \initBasisIter)^T \projector_\parentSpace (\projector_\childSpace \initBasisIter) \,.
\end{equation}
Noting that $\childSpace \in \frontierMeet$, we can use results from case
(1). Specifically,  we have from Eq.\ (\ref{eq:proj_coeffs}) that
\begin{equation}\label{eq:expand_proj_metric_two}
\projector_\childSpace \initBasisIter = \projector_\childSpace \initBasisIterLast \compressionCoeffs{\childSpace} \,.
\end{equation}
Substituting Eqs.~\eqref{eq:expand_proj_metric_two} and
\eqref{eq:projector_sqeeze_two} into
Eq.~\ref{eq:expand_proj_metric} yields
\begin{equation}
(\initBasisIter)^T \projector_\parentSpace \initBasisIter = (\projector_\childSpace
	\initBasisIterLast \compressionCoeffs{\childSpace})^T
	\projector_\parentSpace (\projector_\childSpace \initBasisIterLast
	\compressionCoeffs{\childSpace}) = (\compressionCoeffs{\childSpace})^T
	\left[(\initBasisIterLast)^T \projector_\parentSpace
	\initBasisIterLast\right] \compressionCoeffs{\childSpace} \,.
\end{equation}
Therefore, we see that, \textbf{for a space $\parentSpace$ below the frontier
meet
$\frontierMeet$}, \textit{the projected metrics before and after compression
are related by conjugation by the matrix $\compressionCoeffs{\childSpace}$
defined in Eq.\ (\ref{eq:conj_matrix}) for the unique space $\childSpace \in
\frontierMeet$ such that $\parentSpace \subsetneq \childSpace$}. This
concludes the second case.

\item Suppose $\parentSpace$ is \textbf{above} the frontier meet $\frontierMeet$, i.e., there
	exists a $\childSpace \in \frontierMeet$ such that $\childSpace \subsetneq
	\parentSpace$. In this case, there is no easy conjugation expression that relates the projected metrics before and after compression, to our knowledge. However, if the new projected metrics on the frontier $\frontierMeet$ have been computed, then all of the projected metrics for vertices above the frontier $\frontierMeet$ can be computed by recursively applying the property in Eq.\ (\ref{eq:proj_metric_rec}), namely that
\begin{equation}
(\initBasisIter)^T \projector_{\parentSpace} \initBasisIter = \sum_{\childSpace \in \tRefine{\parentSpace}{T}} (\initBasisIter)^T \projector_{\childSpace} \initBasisIter \,.
\end{equation}
The base case for the recursion corresponds to case (1), where $\parentSpace \in
\frontierMeet$. 
\end{enumerate}

These three cases present a practical algorithm for updating the
projected-metric attributes stored at each vertex of the refinement tree after
compression. Roughly, this algorithm comprises four steps:

\begin{enumerate}
\item Compute the frontier meet $\frontierMeet \equiv \bigwedge_i \frontierBefore_i$ using algorithm \ref{alg:meet}.
\item Compute all the projected metrics \textbf{on} the frontier meet
	$\frontierMeet$ via Eqs.~(\ref{eq:conj_matrix}) and (\ref{eq:update_eq}).
		This requires computing a conjugation matrix
		$\compressionCoeffs{\parentSpace}$ for each vertex $\parentSpace \in
		\frontierMeet$ and then conjugating the projected-metric attribute at that
		vertex, i.e.,
\begin{equation}
\projMetric_\tree(\parentSpace) \gets (\compressionCoeffs{\parentSpace})^T \, (\projMetric_\tree(\parentSpace)) \, \compressionCoeffs{\parentSpace} \,.
\end{equation}
\item For all vertices \textbf{above} the frontier meet $\frontierMeet$, we
	recursively compute the projected-metric attribute for a vertex as the sum
		of the projected-metric attributes for that vertex's children, i.e.,
\begin{equation}
\projMetric_\tree(\parentSpace) \gets \sum_{\childSpace \in \tRefine{\parentSpace}{T}} \projMetric_\tree(\childSpace) \,.
\end{equation}
\item For all vertices \textbf{below} the frontier meet $\frontierMeet$, we
	apply the conjugation matrix $\compressionCoeffs{\parentSpace}$ computed at
		their ancestor $\parentSpace \in \frontierMeet$, i.e.,
\begin{equation} \label{eq:recursive}
\projMetric_\tree(\childSpace) \gets (\compressionCoeffs{\parentSpace})^T  \,
	(\projMetric_\tree(\childSpace))\,\compressionCoeffs{\parentSpace}\,.
\end{equation}
\end{enumerate}

Steps (1), (2), and (3) all incur an operation count that depends only on
$\romSize$ (the basis dimension before compression) and $\initRomSizeIterLast$
(the dimension of the basis $\initBasisIterLast$).  On the other hand,
performing step (4) explicitly would require visiting all $O(\stateVecSize)$
vertices beneath the frontier $\frontierMeet$, thereby incurring an
unacceptable computational cost that would preclude significant savings over
simply computing the metric $\romBasis^T \romBasis$ outright. Fortunately, the
conjugation matrices $\compressionCoeffs{\parentSpace}$ that must be applied
to all descendants of $\parentSpace \in \frontierMeet$ in step (4) are
\textit{completely identical}. Therefore, we opt to defer the application of
these matrices to their respective metrics until the projected-metric
attributes are actually required. In the full version of our metric
computation algorithm, the majority of these computations are done
\textit{just-in-time} by storing the state of the refinement tree attributes
implicitly rather than explicitly.

In this spirit of deferring computation until necessary, we make a clear
distinction between two different types of vertices in the refinement tree
$\tree$:

\begin{enumerate}
\item \textbf{Explicit vertices}: these are vertices $\parentSpace$ of $\tree$ such that the attribute $\projMetric_\tree(\parentSpace)$ is current with the correct value of the projected metric. That is,
\begin{equation}
\projMetric_\tree(\parentSpace) = (\initBasisIter)^T \projector_\parentSpace \initBasisIter \,.
\end{equation}
\item \textbf{Implicit vertices}: these are vertices $\parentSpace$ of $\tree$
	such that the attribute $\projMetric_\tree(\parentSpace)$
		must be updated before it has the desired value. That is,
\begin{equation}
\projMetric_\tree(\parentSpace) \neq (\initBasisIter)^T \projector_\parentSpace \initBasisIter
\end{equation}
in general.
However, by virtue of the conjugacy relations derived earlier, obtaining the
		desired value is a matter of conjugating the projected-metric attribute by
		some matrix $\mat{B}$ that can be computed,
\begin{equation}
\mat{B}^T (\projMetric_\tree(\parentSpace)) \mat{B} = (\initBasisIter)^T \projector_\parentSpace \initBasisIter \,,
\end{equation}
and the projected-metric attribute can then be updated in a deferred manner when it is needed,
\begin{equation} \label{eq:proj_metric_conj}
\projMetric_\tree(\parentSpace) \gets \mat{B}^T (\projMetric_\tree(\parentSpace)) \mat{B} \,.
\end{equation}

\end{enumerate}

To aid in this deferred computation, we introduce two new attributes that are
stored at each vertex of the refinement tree $\tree$.

\begin{definition}[explicit flag attribute]
We ascribe to every vertex $\parentSpace$ of the refinement tree $\tree$ an
	\textbf{explicit flag attribute} $\explicitFlag_\tree(\parentSpace)$. The
	flag determines whether or not the vertex is viewed by our algorithm as an
	\textit{explicit} or \textit{implicit} vertex. Initially, all of the flags
	are set to false, i.e.,
\begin{equation}
\explicitFlag_\tree(\parentSpace) \gets \textsc{False} \,,
\end{equation}
with the understanding that \textit{every vertex is explicit before the first compression and the flag is used only after the first compression has occurred}.
\end{definition}

\begin{definition}[deferred conjugation attribute]
We ascribe to some vertices $\parentSpace$ in $\tree$ a \textbf{deferred conjugation attribute} $\defConj_\tree(\parentSpace)$. The understanding is that a deferred conjugation at node $\parentSpace$ must be applied to all of $\parentSpace$'s strict descendants before the projected-metric attributes at those descendants are correct. That is, for an implicit node $\childSpace$ in $\tree$, the necessary conjugation matrix $\mat{B}$ in Eq.\ (\ref{eq:proj_metric_conj}) to correct the projected-metric attribute is given by
	\begin{align} \label{eq:attribute_conj}
	\begin{split}
	\mat{B} = &(\defConj_\tree(\parentSpace_m)) (\defConj_\tree(\parentSpace_{m
	- 1})) \cdots\\
	&(\defConj_\tree(\parentSpace_1)) \,,
	\end{split}
	\end{align}
where $\parentSpace_1, \ldots, \parentSpace_m$ are the vertices with deferred
	conjugations encountered when traversing the tree $\tree$ upwards from the
	parent of $\childSpace$ to the first explicit vertex $\parentSpace_m$
	encountered (both inclusive). The deferred conjugation attributes for all
	vertices are initially set to empty, i.e.,
\begin{equation}
\defConj_\tree(\parentSpace) \gets \emptyset \,.
\end{equation}
\end{definition}

Despite the somewhat complex definition, the idea behind the deferred
conjugation attribute is quite simple: when we compute the conjugation matrix
$\compressionCoeffs{\parentSpace}$ for a space $\parentSpace$ in the meet
$\frontierMeet$, we need to conjugate the projected-metric attributes of
$\parentSpace$ and all of its descendants. While we compute the conjugation of
the projected-metric attribute of $\parentSpace$ explicitly, we defer the
computation of all of the strict descendants of $\parentSpace$, and instead
mark $\parentSpace$ as having a deferred conjugation attribute,
\begin{equation}
\defConj_\tree(\parentSpace) \gets \compressionCoeffs{\parentSpace} \,,
\end{equation}
with the understanding that this conjugation will be applied to all the strict
descendants of $\parentSpace$ on a \textit{just-in-time} basis. To illustrate
this point, suppose that after the first compression, we later want to compute
the projected metric at a descendant $\childSpace$ of $\parentSpace \in
\frontierMeet$. As indicated in Eq.\ (\ref{eq:attribute_conj}), we would
traverse the tree upward from $\childSpace$ until we reached $\parentSpace$,
find the deferred conjugation $\compressionCoeffs{\parentSpace}$ at
$\parentSpace$, apply it to the projected-metric attribute of $\childSpace$ as
necessary, set the vertex $\childSpace$ to explicit, and set the deferred
conjugation at $\childSpace$ to $\compressionCoeffs{\parentSpace}$, as all of
the descendants of $\parentSpace$ must still be updated with the deferred
conjugation $\compressionCoeffs{\parentSpace}$ before they are correct. In
this way, the deferred conjugations `trickle down' the tree $\tree$ as 
needed. Thus, this approach avoids any operations whose complexity explicitly
dependence on the FOM dimension $\stateVecSize$.

Of course, while this illustrative example is rather simple, there are still nontrivial implementation details we must address. However, the baseline invariant we maintain is that if a vertex $\parentSpace$ is explicit, then the projected-metric attribute at $\parentSpace$ is correct, i.e.,
\begin{equation}
\explicitFlag_\tree(\parentSpace) = \textsc{True} \qquad \text{implies} \qquad \projMetric_\tree(\parentSpace) = (\initBasisIter)^T \projector_\parentSpace \initBasisIter \,,
\end{equation}
where $\initBasisIter$ is the reduced basis after the most recent compression.
And conversely, if the vertex $\parentSpace$ is not explicit, then the
projected-metric attribute at $\parentSpace$ can corrected by
accumulating all necessary deferred conjugations above it in the tree,
i.e.,
\begin{equation}
\explicitFlag_\tree(\parentSpace) = \textsc{False} \qquad \text{implies} \qquad \mat{B}^T (\projMetric_\tree(\parentSpace))\mat{B}  = (\initBasisIter)^T \projector_\parentSpace \initBasisIter \,,
\end{equation}
where $\mat{B}$ is the product of all deferred conjugations above the
vertex $\parentSpace$, given by Eq.\ (\ref{eq:attribute_conj}). 

\subsection{Basis-compression algorithm}\label{sec:basiscompressionalg}

We now assemble the preliminaries prepared in the previous few sections.
First, we present an algorithm to actually compute projected metrics from the
tree.  Algorithm \ref{alg:get_proj_metric}, which follows the schematic
outline given in the previous section, provides the computational details of
this process.

\begin{algorithm}[h]
\caption{Get Projected Metric (Basic)} \label{alg:get_proj_metric}
\hspace*{\algorithmicindent} \textbf{Input}: The $\mathbb{R}^n$-refinement tree $\tree$, and the vertex $\parentSpace$ at which to compute the projected metric. \\
\hspace*{\algorithmicindent} \textbf{Output}: The projected metric $(\initBasisIter)^T \projector_\parentSpace \initBasisIter$ at $\parentSpace$.
\begin{algorithmic}[1]
\Procedure{GetProjectedMetric}{$\tree, \parentSpace$}
   \State $\metric \gets \projMetric_\tree(\parentSpace)$
   \If{\textbf{not} $\explicitFlag_\tree(\parentSpace)$} \Comment{If the vertex is implicit, we traverse upwards and apply all deferred conjugations.}
   	\State $\childSpace = \parentSpace$
   	\State $\mat{B} \gets \mat{I}$ \Comment{$\mat{I}$ is the identity with the same dimensions as $\metric$}
   	\While{\textsc{True}} \Comment{Apply all the deferred conjugations above the vertex $\parentSpace$ to $\metric$.}
   		\State $\childSpace \gets \textsc{Parent}(\childSpace)$
   		\If{$\defConj_\tree(\childSpace) \neq \emptyset$}
   			\State $\mat{B} \gets \mat{B} (\defConj_\tree(\childSpace))$ \Comment{If there is a deferred conjugation stored at this vertex, apply it.}
   		\EndIf
			\If{$\explicitFlag_\tree(\childSpace)$} \Comment{We stop when we have
			reached an explicit vertex.}
   			\State $\metric \gets \mat{B}^T \metric \mat{B}$ \Comment{Apply the accumulated deferred conjugations to $\metric$. }
   			\State \textbf{break}
   		\EndIf
   	\EndWhile
   \EndIf
   \State \textbf{return} $\metric$
\EndProcedure
\end{algorithmic}
\end{algorithm}

With the ability to compute metrics in this fashion, we now follow the
schematic details provided at the end of section \ref{sec:update_proj_metrics}
to update the projected metrics of the tree $\tree$ after a compression has
been performed. Algorithms \ref{alg:update_proj_metric} and
\ref{alg:compute_proj_metric_recursive} provide the associated algorithms.

\begin{algorithm}[h]
\caption{Update Projected Metrics (Basic)} \label{alg:update_proj_metric}
\hspace*{\algorithmicindent} \textbf{Input}: The $\mathbb{R}^n$-refinement tree $\tree$ and the frontiers $\frontierBefore_i$ used to perform compression, and the coefficients $\romInitBasisIter$. \\
\hspace*{\algorithmicindent} \textbf{State Effects}:  Updates the internal
	representation of the projected metrics of $\tree$ so that they correspond
	to the desired basis with coordinate coefficients $\romInitBasisIter$.
\begin{algorithmic}[1]
\Procedure{UpdateProjectedMetrics}{$\tree, \frontierBefore_1, \ldots,\frontierBefore_{\initRomSizeIterLast},\romInitBasisIter$}
   \State $\frontierMeet \gets \textsc{ComputeMeet}(\frontierBefore_1, \ldots,\frontierBefore_{\initRomSizeIterLast})$
	 \For {$\parentSpace \in \frontierMeet$}\Comment{Iterate through all
	 vertices on the frontier meet.}
   	\State $\projMetric_\tree(\parentSpace) \gets
		\textsc{GetProjectedMetric}(\tree, \parentSpace)$  \Comment{Compute correct
		projected metric.}
		\State $\compressionCoeffs{\parentSpace} \gets \mat{0} \in
		\R{\initRomSizeIterLast \times \initRomSizeIter}$ 
		\For {$(i, j) \in \nat{\initRomSizeIterLast} \times
		\nat{\initRomSizeIter}$} \Comment{Compute the deferred conjugation using
		Eq.\ (\ref{eq:conj_matrix}).}
   		\State $\compressionCoeffs{\parentSpace}_{i,j} \gets \romInitBasisIter_{\ancestor_i(\parentSpace), j} $
   	\EndFor
 	\If{$\defConj_\tree(\parentSpace) \neq \emptyset$} \Comment{Apply the deferred conjugation.}
 		\State $\defConj_\tree(\parentSpace) \gets \defConj_\tree(\parentSpace) \, \compressionCoeffs{\parentSpace}$
 	\Else
 		\State $\defConj_\tree(\parentSpace) \gets \compressionCoeffs{\parentSpace}$
 	\EndIf
 	\State $\explicitFlag_\tree(\parentSpace) \gets \textsc{True}$
		\Comment{Now, set this vertices on the frontier meet to explicit.}
   \EndFor
   
	\For {$\parentSpace \in \{ \childSpace \mid \explicitFlag_\tree(\childSpace)
	= \textsc{True} \} \setminus \frontierMeet$}\Comment{Reset all other vertices except those on $\frontierMeet$ to implicit.}
		\State $\explicitFlag_\tree(\childSpace) \gets \textsc{False}$
	\EndFor   
   
   \State $\rootSpace \gets \textsc{Root}(\tree)$
   \State $\textsc{ComputeProjectedMetricsRecursive}(\tree, \rootSpace)$ 
\EndProcedure
\end{algorithmic}
\end{algorithm}

\begin{algorithm}[h]
\caption{Recursive Projected Metric Computation (Basic)} \label{alg:compute_proj_metric_recursive}
\hspace*{\algorithmicindent} \textbf{Input}: The $\mathbb{R}^n$-refinement tree $\tree$ and the vertex $\parentSpace$ at which to begin the recursive computation.  \\
\hspace*{\algorithmicindent} \textbf{Output}: The correct projected metric $\metric$ at $\parentSpace$. \\
\hspace*{\algorithmicindent} \textbf{State Effects}: Computes the projected metric for $\parentSpace$ and its descendants above the frontier meet via the procedure in Eq.\ (\ref{eq:recursive}). Sets $\parentSpace$ and all of its descendants above the frontier meet to explicit and removes any deferred conjugations.
\begin{algorithmic}[1]
\Procedure{ComputeProjectedMetricsRecursive}{$\tree, \parentSpace$}
	\If{\textbf{not} $\explicitFlag_\tree(\parentSpace)$} \Comment{If this
	vertex is implicit, recursively sum the projected metrics of its descendants.}
		\State $\projMetric_\tree(\parentSpace) \gets \sum_{\childSpace \in \textsc{Children}(\parentSpace)} \textsc{ComputeProjectedMetricsRecursive}(\tree, \childSpace) $
		\State $\explicitFlag_\tree(\parentSpace) \gets \textsc{True}$
		\State $\defConj_\tree(\parentSpace) \gets \emptyset$ 
	\EndIf
	\State \Return $\projMetric_\tree(\parentSpace)$
\EndProcedure
\end{algorithmic}
\end{algorithm}

Finally, algorithm \ref{alg:basis_compression} combines all the required
components into the final online basis-compression algorithm.

\begin{algorithm}[h]
\caption{Basis Compression } \label{alg:basis_compression}
\hspace*{\algorithmicindent} \textbf{Input}: The $\mathbb{R}^n$-refinement tree $\tree$, frontiers $\frontierBefore_i$ used to perform compression, the compression snapshot data $\romSnaps$ in the ROM basis, and the compression threshold $\epsilon$. \\
\hspace*{\algorithmicindent} \textbf{Output}: The new frontiers $\frontierIter_i$, the coefficients $\romInitBasisIter$ of the compressed ROM basis in the current ROM basis. \\
\hspace*{\algorithmicindent} \textbf{State Effects}: Updates the internal state of the tree $\tree$ to correspond to the projected metrics of the new compressed basis.
\begin{algorithmic}[1]
\Procedure{CompressBasis}{$\tree, \frontierBefore_1, \ldots, \frontierBefore_{\initRomSizeIterLast}, \romSnaps, \epsilon$}
\State $\frontier \gets \bigsqcup_i \frontierBefore_i$
	\State\label{step:metricPOD} $\romCompVecs \gets \textsc{MetricCorrectedPOD}(\tree, \frontier, \romSnaps, \epsilon)$ \Comment{Compute metric corrected POD to get the enrichment vectors $\romCompVecs$}
\State $\romInitBasisIter \gets \left[\begin{array}{cc} \romInitBasisZ & \romCompVecs\end{array}\right]$ \Comment{Append the enrichment vectors $\romCompVecs$ to the original ROM vectors to get compressed basis.} 
	\State\label{step:UpdateProjectedMetrics} $\textsc{UpdateProjectedMetrics}(\tree, \frontierBefore_1, \ldots, \frontierBefore_{\initRomSizeIterLast},\romInitBasisIter)$ \Comment{Update the projected metrics for this new basis}
	\For{$i \in \nat{\initRomSizeIter}$} \Comment{All new frontiers are initialized to the root of $\tree$. Note $\initRomSizeIter$ is the column count of $\romInitBasisIter$.}
	\State $\frontierIter_i \gets \{ \textsc{Root}(\tree) \}$
\EndFor
\State \Return $(\frontierIter_1, \ldots, \frontierIter_{\initRomSizeIter}, \romInitBasisIter)$
\EndProcedure
\end{algorithmic}
\end{algorithm}

\subsubsection{A note on numerical errors and diagnosis}

Due to the limits of finite-precision arithmetic, numerical errors can
accumulate over the course of several basis compressions. In general, the
severity of these numerical errors depends on the choice of singular-value
threshold $\epsilon$ in algorithm (\ref{alg:pod}). However, as long as the
singular-value threshold $\epsilon$ is not too small (e.g., $\epsilon >
10^{-3}$), we have found that the magnitude of such numerical errors is usually
negligible. Moreover, because vectors $\compVecs$ comprise a somewhat
heuristic choice of enriching the original ROM $\initBasisZ$---and because we
never explicitly rely on orthogonality of $\compVecs$---even substantial
numerical errors in this process do not have substantial impact on the
fundamental objective of the overall method.

However, if numerical error do become large, the proposed method exposes a
natural indicator for the degree of numerical error that has accumulated in
the tree's internal representation of the projected metrics, namely, how close
the projected-metric attribute of the root of $\tree$ is to the identity
matrix.  Because the compressed basis $\romInitBasisIter$ is always orthogonal
in exact arithmetic, and the root node is always explicit, we should always
have
\begin{equation}
\projMetric_\tree(\textsc{Root}(\tree)) = \mat{I} \in \R{\initRomSizeIter\times \initRomSizeIter} \,.
\end{equation}
The extent to which this equality is violated provides a useful indicator for
the magnitude of accumulated numerical errors. If the stored value deviates
substantially from the identity, it is always possible to simply reset the
compressed basis $\romInitBasisIter$ to the original ROM basis $\initBasisZ$
and reset the internal state of the tree $\tree$ to its original
configuration.

\section{Discussion of algorithm complexity}\label{sec:complexity}

We conclude by examining the space and time complexity of the proposed
algorithm.

\begin{enumerate}
\item \textit{Offline Precomputation}: The offline stage requires the
	following precomputation steps.
\begin{enumerate}
	\item \textit{Refinement-tree construction}. We give the complexity of
		the data-driven tree construction algorithm described in section
		\ref{sec:tree_construction}. First,
		the algorithm transforms the snapshots $\snaps \in \R{\stateVecSize \times \snapCount}$ to the correct basis via 
		$\leafBasis^T\snaps$, where
		$\leafBasis\equiv[\leafBasisCol_1\,\cdots\,\leafBasisCol_\stateVecSize]$ denotes the orthogonal leaf
		basis. Naively,
		this incurs $O(\snapCount \cdot T(\stateVecSize))$ operations, where
		$T(\stateVecSize)$ denotes the cost of transforming a single
		$\stateVecSize$-vector into the desired
		leaf basis. In the worst case, this is $T(\stateVecSize) =
		O(\stateVecSize^2)$. However, there are many cases were $T(\stateVecSize)$
		is smaller. For example, 
		$T(\stateVecSize) = O(\stateVecSize
		\log \stateVecSize)$
		when the leaf basis comprises
		Fourier modes,
		$T(\stateVecSize) =
		O(\stateVecSize)$ when leaf-basis transformation is computed via a 
		fast wavelet transform, and 
		$T(\stateVecSize) = 0$
		for the Kronecker leaf basis proposed in the original $h$-refinement paper
		\cite{carlberg2015adaptive}, as transformation is
		unnecessary in this case.

Next, the algorithm preprocesses the transformed snapshots for clustering
		via Eq.~\eqref{eq:kmeansPre}, which incurs
		$O(\snapCount \stateVecSize)$ operations. Finally, the algorithm performs recursive
		$k$-means clustering to construct the refinement tree. If the
		resulting
		tree is roughly balanced, as seems to be the case in
		practice,  the complexity of this dynamic-programming procedure is
		$O(K(\snapCount, \stateVecSize) \cdot \log \stateVecSize)$, where
		$K(\snapCount, \stateVecSize)$ denotes the complexity of a $k$-means operation
		on $\stateVecSize$ vectors of length $\snapCount$. Thus, in total, the
		operation count is bounded by
\begin{equation}
\text{Operation Count}: \qquad O(\snapCount \cdot T(\stateVecSize)) + O(\snapCount\stateVecSize) + O(K(\snapCount, \stateVecSize) \cdot \log \stateVecSize) \,.
\end{equation}
For storage, we require $O(\snapCount\stateVecSize)$ for the snapshot
matrix, and $O(\stateVecSize)$ for the tree (without any additional
attributes), because the number of nodes in a tree is at most twice the
		number of leaves. Finally, we require $B(\stateVecSize)$ storage for the
leaf basis itself. The amount of storage needed for the leaf basis can
vary, and  $B(\stateVecSize) = O(\stateVecSize^2)$ in the worst case; however,
in many cases, the basis $B(\stateVecSize)$ need not
be explicitly stored at all (e.g., Fourier basis), or the basis
is sparse. Thus, the total required storage is 
\begin{equation}
\text{Storage}: \qquad O(\snapCount \stateVecSize) + B(\stateVecSize) \,.
\end{equation}
\item \textit{Precomputing the projected metrics}: the online-efficient
	compression algorithm requires precomputing the projected metrics
		$(\initBasisZ)^T \projector_\parentSpace \initBasisZ$ for all vertices
		$\parentSpace$ in the refinement tree $\tree$.  We accomplish this by
		first computing this projected metric for each leaf node
		$\leafSpace_i\equiv\vecspan(\{\leafBasisCol_i\}) \in \treeLeaves$,
		$i=1,\ldots,\stateVecSize$. In this case, we have
 \begin{equation} 
(\initBasisZ)^T \projector_\parentSpace \initBasisZ
	 =(\initBasisZ)^T\leafBasisCol_i\leafBasisCol_i^T\initBasisZ.
	  \end{equation} 
This identity, combined with the summation property from Eq.\
(\ref{eq:proj_metric_rec}) implies that, after computing $\leafBasis^T
\initBasisZ$, which incurs $O(\initRomSize \cdot T(\stateVecSize) )$
operations, one can
compute the projected metric for the leaves in $O(\stateVecSize
\initRomSize^2)$, as computing each
$\initRomSize \times \initRomSize$ matrix amounts to
computing the outer product of an $\initRomSize$-vector with itself. Then, we can compute the 
projected metrics at
the remaining tree vertices by recursively applying Eq.\
(\ref{eq:proj_metric_rec}), which incurs $O(\stateVecSize \initRomSize^2)$
operations. Likewise, these metrics consume $O(\stateVecSize \initRomSize^2)$
storage, yielding
\begin{align}
\text{Operation Count}&: \qquad O(\stateVecSize \initRomSize^2) + O(\initRomSize \cdot T(\stateVecSize)) \,, \\
\text{Storage}&: \qquad O(\stateVecSize \initRomSize^2)  \,. \\
\end{align}
\item \textit{Precomputing Vector Sieves}: 
	The proposed method requires the ability to quickly compute vector sieves;
	this occurs implicitly in step \ref{step:sieve} of algorithm
	\ref{alg:refine_proc}.
	The complexity of doing so is heavily dependent on the
	selected leaf
	basis
		$\leafBasis$. In the best case, the leaf basis
		$\leafBasis$ is sparse, with the extreme scenario corresponding to $\leafBasis =
		\mat{I}$, i.e., when the leaf basis corresponds to the standard Kronecker basis as in
		the original $h$-refinement paper
		\cite{carlberg2015adaptive}. In
		this scenario, computing sieves is straightforward: if any vertex $\parentSpace$ is
		spanned by the Kronecker vectors $\ve{e}_{i_1} ,\ldots, \ve{e}_{i_k}$, then
		projecting a vector $\ve{\phi}$ into $\parentSpace$ amounts to setting all
		of the entries of $\ve{\phi}$ to $0$ except for entries $i_1, \ldots,
		i_k$. As such, this choice for the leaf basis does not
		require any specialized strategy or additional storage. However, in the worst case (e.g., a Fourier
		basis), each of the leaf basis vectors
		is dense. Moreover, due to the Shannon Sampling Theorem, it
		is unlikely that one can accurately compute projections onto
		vector spaces spanned by high-frequency Fourier modes without observing
		most (if not all) of the entries of the vector being projected; however, this
		incurs an online cost of $\Omega(n)$. 
		
		Fortunately, this online cost can be
		avoided with offline precomputation and storage. In particular, we can
		precompute all the projections of the original basis $\initBasisZ$ into
		all of the possible vector spaces $\parentSpace$ in the tree.
		Unfortunately, this consumes $O(p_0 n^2)$ storage. However, only the
		sieves associated with tree vertices \textit{above} the current global
		frontier must be stored in fast memory. Thus, the remaining projections
		can be offloaded and retrieved as needed.
		
		One can also trade storage for online computation by computing only a
		suitable subset of the projections of $\initBasisZ$ onto the vector spaces
		$\parentSpace$ and use the additive property of the refinement tree
		$\tree$ to sum these precomputed projections appropriately to achieve any
		desired projection. Alternatively, one can switch to directly computing
		the projections when a certain depth in the tree has been reached. Still,
		we acknowledge that neither of these options may be ideal
		in practice. As is, the ability of this algorithm to efficiently handle
		general, globally supported leaf bases $\leafBasis$ remains an area for possible
		improvement. For this reason, we encourage the use of locally supported
		leaf bases $\leafBasis$ when possible, as locally supported leaf bases do
		not have this storage problem when the tree $\tree$ is constructed to encourage sparsity in the refined basis vectors.
\end{enumerate}
\item \textit{Online basis refinement}: Online basis refinement is composed of
	two steps associated with steps \ref{step:computeErrorIndicators} and
	\ref{step:refineFrontiers} in algorithm \ref{alg:refine_proc}. First, the
	algorithm computes error indicators via
		algorithm \ref{alg:err}, then the algorithm uses these error indicators to refine the
		current frontier via algorithm \ref{alg:refine_frontiers}. In
		algorithm \ref{alg:err}, the expensive operations correspond to the computation of the coarse
		adjoint $\adjointC$ in step \ref{step:coarseAdjoint} and the computation of the error indicators
		$\errorIndF$ in step \ref{step:errorIndicators}. 
		These steps require computing 
		(1) $(\romBasisC)^T \frac{\partial \gResidual}{\partial \stateVec} (\romBasisC
		\romStateVecC)^T \romBasisC $, which incurs
		$O(\stateVecSize\romSize^2)$ operations due to the sparsity of the
		Jacobian, (2) $(\romBasisC)^T \frac{\partial \interestFunc}{\partial
		\stateVec} \left(\romBasisC \romStateVecC\right)^T$, which incurs
		$O(\stateVecSize\romSize)$ operations, and (3)
		$\left|\left[\prolongadjoint\right]_\childSpace
		\left(\romBasisCol_\childSpace^h\right)^T \gResidual\left(\romBasisC
		\romStateVecC\right)\right|$, which incurs $O(\stateVecSize\romSize)$
		operations.
		The remainder of the
		computation can be performed in $O(\romSize^3)$ and is dominated by the
		matrix solve for the coarse adjoint $\adjointC$. Of course, any operations
		whose complexity scales with $\stateVecSize$ precludes online efficiency.
		While we do not resolve this bottleneck in this paper, we
		point out that this $\stateVecSize$-dependence can be eliminated using
		a hyper-reduction technique such as collocation
		\cite{astrid2008missing,legresley2006application}, gappy POD
		\cite{sirovichOrigGappy,bos2004als,astrid2008missing,CarlbergGappy,carlberg2013gnat},
		or empirical interpolation 
		\cite{barrault2004eim,chaturantabut2010nonlinear,galbally2009non,drohmannEOI}.
		Such techniques approximate the full-order
		residual $\gResidual$ via $\hat{\gResidual}(\cdot) \equiv \mat{W}
		\, \mat{P} \, \gResidual(\cdot)$, where $\mat{W} \in \R{\stateVecSize
		\times s}$ and $\mat{P} \in
		\{0, 1\}^{s \times \stateVecSize}$ comprises $s(\ll\stateVecSize)$ selected
		rows of the identity matrix. Subsequently, evaluating the projected residual $\romBasis^T
		\hat{\gResidual}$ incurs only $O(\romSize s)$ operations, assuming that
		$\romBasis^T \mat{W} \in \R{\romSize \times s}$ is
		precomputed. The precise
		integration of hyper-reduction with the proposed method consitutes an
		important direction for further work. Finally, we note that algorithm
		\ref{alg:refine_frontiers} does not require any operations that depend on
		$\stateVecSize$, as it only examines the tree $\tree$ above the
		fully-refined global frontier $\refine{(\frontierglobalC)}$. Moreover, all
		operations, excluding the sub-routine call to algorithm \ref{alg:err} and
		the sieve of the original basis $\initBasis$ through $\frontierglobalF$ can be
		performed in $O(\romSize)$. For the sieve of the basis $\initBasis$
		through $\frontierglobalF$ we refer the reader to the earlier discussion of
		precomputation of vector sieves. In principle, it can be achieved in
		$O(\romSize \initRomSize)$ if the leaf basis is sufficiently sparse, or if
		enough precomputation is done offline.
\item \textit{Online basis compression}: To facilitate the analysis, we divide
	this up into multiple steps.
\begin{enumerate}
	\item \textit{Computing the metric} $\romBasis^T \romBasis$ (step \ref{step:computeMetric} of
		algorithm \ref{alg:pod}): each entry of the
	metric requires a lookup in the projected-metric attributes of the tree
		$\tree$. Finding the correct vertex at which to perform a lookup requires
		finding the common ancestor of two nodes. Assuming the tree is relatively
		balanced, this can be done in $O(\log \stateVecSize)$ time. If the
		projected-metric attribute is stored implicitly, then it is difficult to
		place an exact operation count on retrieving the projected metric stored
		at the common ancestor, as it depends on the number of deferred
		conjugations above the common ancestor. As is, the computation could be
		potentially expensive, fortunately, there is a method to accelerate this
		computation using a path compression technique, which reuses computation
		performed across multiple lookups for projected metrics\footnote{Such a technique works by setting all encountered vertices to explicit when traversing upwards through the tree in algorithm \ref{alg:get_proj_metric}, and setting deferred conjugations appropriately on these newly explicit vertices. This allows computation to be reused over multiple calls to algorithm \ref{alg:get_proj_metric}, greatly reducing the run-time of this component of the algorithm. However, some extra bookkeeping is required to ensure that one does not apply a given deferred conjugation twice to a single vertex. }. One can also
		maintain $\romBasis^T \romBasis$ every time a basis refinement is
		performed, and update the matrix accordingly.
	\item \textit{Compressing the basis} (steps \ref{step:factorize} and \ref{step:SVD} of
		algorithm \ref{alg:pod}): Compressing the basis once $\romBasis^T
		\romBasis$ requires factorizing the metric $\romBasis^T \romBasis \in
		\R{\romSize \times \romSize}$. This can be accomplished via symmetric
		eigenvalue decomposition, SVD, or a sparse
		Cholesky decomposition, for example. In the worst case, this decomposition
		incurs $O(\romSize^3)$ operations.
		Afterwords, we must perform a POD on the matrix $\tilde{\mat{Y}}_{\perp} \in
		\R{\romSize \times \compSnaps}$, which incurs $O(\romSize^2 \compSnaps)$ or
		$O(\compSnaps^2 \romSize)$ operations if $\compSnaps$ or $\romSize$ is
		larger, respectively. The matrix multiplications performed afterward are dominated by
		the above operations.
	\item \textit{Updating the internal state of the refinement tree} (step \ref{step:UpdateProjectedMetrics} of
		algorithm \ref{alg:basis_compression}): After the
	basis has been compressed, we must update the projected-metric attributes on
		the refinement tree. This results in computations performed on vertices
		residing \textit{above} 
		frontier meet $\bigwedge_i \frontier_i$. Because the frontier meet
		is characterized by $O(\romSize)$ vertices, the number of vertices this computation affects
		is also $O(\romSize)$. Each vertex involves a matrix conjugation, which
		incurs $O(\initRomSize^2 \romSize)$, yielding a total of $O(\initRomSize^2
		\romSize^2)$ operations. 
\end{enumerate}
\end{enumerate}

\section{Numerical experiments}\label{sec:numericalExp}

We now assess the performance of the proposed method on two benchmark problems
in the context of a Galerkin reduced-order model characterized by a test basis
of the form \eqref{test_trial} with $\testTrialTrans = \identity$.

\subsection{FitzHugh--Nagumo equations} \label{sec:fitzhugh}

The FitzHugh--Nagumo equations model the activation and deactivation dynamics
of a spiking neuron. It is a prototypical example of an excitable dynamical
system and has played an important role in mathematical neuroscience. The
system models two variables, the voltage $\voltage(x, t)$ and the recovery of
voltage $\recVoltage(x, t)$, on the domain $\Omega\times \mathbb T\equiv [0,
L]\times [0,8]$. The governing system of partial differential equations is
given by
\begin{align}
\varepsilon \voltage_t(x, t) &= \varepsilon^2 \voltage_{xx}(x, t) + f(\voltage(x, t)) - \recVoltage(x, t) + c \,, \\
\recVoltage_t(x, t) &= b\voltage(x, t) - \gamma \recVoltage(x, t) + c \,,
\end{align}
with 
$
f:\voltage \mapsto \voltage(\voltage - 0.1)(1 - \voltage) \,.
$
We consider the boundary conditions
\begin{alignat}{4}
&\voltage(x, 0) = 0, \qquad &&\recVoltage(x, 0) = 0, \qquad &&&x \in \Omega \,, \\
&\voltage_x(0, t) = -i_0(t), \qquad &&\voltage_x(L, t) = 0, \qquad &&&t \geq 0 \,
\end{alignat}
with parameter values
$L = 1$, $\varepsilon = 0.015$, $b = 0.5$,
$c = 0.05$, and $\gamma = 2$.
The stimulus $i_0(t)$ corresponds to 
\begin{equation}
i_0(t) \equiv 50\,000 \, t^3 \exp(-15t)\,.
\end{equation}

To discretize the governing equations, we employ a finite-difference scheme
with $512$ points, yielding a state vector of dimension $\stateVecSize = 1024$
given by
\begin{equation} \label{eq:fhn_statevec}
\stateVec = \left[\begin{array}{cccccc} \voltage_1 & \cdots & \voltage_{\stateVecSize / 2} & \recVoltage_1 & \cdots & \recVoltage_{\stateVecSize / 2} \end{array}\right]^T \,,
\end{equation}
where $\voltage_i$ and $\recVoltage_i$ denote the computed values of
$\voltage$ and $\recVoltage$, respectively, at grid point $i$. To numerically
solve the resulting semidiscretized system,
we use the backward-Euler scheme
and a uniform time step of $\Delta t = 0.008$. We construct
the initial basis $\initBasisZ$ by performing a POD of the first $\compSnaps =
100$ time snapshots $\treeSnaps \in \R{\stateVecSize \times \compSnaps}$
(corresponding to the time interval $[0,0.8]$), and taking the first
$\initRomSize=3$ POD vectors. 

To demonstrate the merits of the proposed approach, we now show that the
ability of the method to enable general basis-refinement mechanisms and to
adaptively compress the basis can yield substantial performance improvements
over the original $h$-refinement method \cite{carlberg2015adaptive} For this
comparison, we construct two refinement trees $T_\text{DCT}$ and $T_\text{K}$,
the first of which employs the discrete cosine transform (DCT) basis as a leaf
basis, and the latter of which uses the standard Kronecker basis as a leaf
basis. Without basis compression, the latter choice is equivalent to the
original $h$-refinement method, which provides a baseline for performance
comparison.

To generate the trees $\tree_\text{DCT}$ and $\tree_\text{K}$ we first
separate
the degrees of freedom into two disjoint sets
$\dofVoltage\defeq\{1,\ldots,\stateVecSize/2\}$ and
$\dofRecVoltage\defeq\{\stateVecSize/2+1,\ldots,\stateVecSize\}$, which
correspond to the degrees of freedom associated with the 
voltage and the recovery of voltage,
respectively. 
We take the leaf bases $\leafBasis_\text{DCT}$ and
$\leafBasis_\text{K}$ of $\tree_\text{DCT}$ and $\tree_\text{K}$ respectively
to be
\begin{equation}
\begin{split}
	\leafBasis_\text{DCT} &\equiv \leafBasis_\text{DCT}^{(\voltage)} \oplus \leafBasis_\text{DCT}^{(\recVoltage)} \equiv \mat{M}^{(\stateVecSize / 2)}_\text{DCT} \oplus \mat{M}_\text{DCT}^{(\stateVecSize / 2)} \,,\\
	\leafBasis_\text{K} &\equiv \leafBasis_\text{K}^{(\voltage)} \oplus \leafBasis_\text{K}^{(\recVoltage)} \equiv \mat{I}^{(\stateVecSize /2)} \oplus \mat{I}^{(\stateVecSize /2)} = \mat{I}^{(\stateVecSize)}\,,
\end{split}
\end{equation}
where $\mat{M}^{(\stateVecSize / 2)}_\text{DCT}$ is the $\stateVecSize / 2 \times \stateVecSize / 2$ Discrete Cosine Transform II matrix. We take the direct sum of two such DCT-II matrices because the first $\mat{M}^{(\stateVecSize / 2)}_\text{DCT}$ corresponds to a DCT-II basis on the degrees of freedom $\dofVoltage$ and the second $\mat{M}^{(\stateVecSize / 2)}_\text{DCT}$ corresponds to a DCT-II basis on the degrees of freedom $\dofRecVoltage$.

To construct the $\tree_\text{DCT}$ and $\tree_\text{K}$ from these leaf
bases, we employ the procedure outlined in section \ref{sec:tree_construction}
with one small modification. In particular, because the state space
$\R{\stateVecSize}$ can be decomposed naturally into \begin{equation}
\label{eq:top_split} \R{\stateVecSize} = \parentSpace^{(\voltage)} +
\parentSpace^{(\recVoltage)} \cong \R{\stateVecSize / 2} \oplus
\R{\stateVecSize / 2} \,, \end{equation} where $\parentSpace^{(\voltage)}
\cong \R{\stateVecSize / 2}$ is the vector space associated with the degrees
of freedom in $\dofVoltage$ and $\parentSpace^{(\recVoltage)} \cong
\R{\stateVecSize / 2}$ is the vector space associated with the degrees of
freedom in $\dofRecVoltage$. Because it is natural to decouple the degrees of
freedom $\dofVoltage$ from the degrees of freedom $\dofRecVoltage$, as they
associated with different variables, we enforce that top-most decomposition in
the trees $\tree_\text{DCT}$ and $\tree_\text{K}$ is precisely the one in Eq.\
(\ref{eq:top_split}). This means that the first sieve performed on any vector
$\initBasisCol_i$ will always be to decompose it into two vectors
\begin{equation} \initBasisCol_i = \initBasisCol_i^{\voltage} +
	\initBasisCol_i^{\recVoltage} \,, \end{equation} where
	$\initBasisCol_i^{\voltage}$ and $\initBasisCol_i^{\recVoltage}$ are
	supported on $\dofVoltage$ and $\dofRecVoltage$, respectively, and are equal
	to $\initBasisCol_i$ on their respective supports. 

We construct the remainder of the trees $\tree_\text{DCT}$ and
$\tree_\text{K}$ beyond the first level by constructing
trees $\tree_\text{DCT}^{(\voltage)}$, $ \tree_\text{DCT}^{(\recVoltage)}$ and
$\tree_\text{K}^{(\voltage)}$, $ \tree_\text{K}^{(\recVoltage)}$ for
decompositions of the spaces 
$\parentSpace^{(\voltage)}$ and $\parentSpace^{(\recVoltage)}$ via the
data-driven algorithm in section \ref{sec:tree_construction}, where the input
snapshot data $\treeSnaps$ corresponds to the same
data used to generate
the basis $\initBasisZ$. To be precise, we execute the algorithm in section
\ref{sec:tree_construction} with data $\treeSnaps_{\dofVoltage, :}$ and leaf
basis $\leafBasis_\text{DCT}^{(\voltage)}$ and $\leafBasis_{T}^{(\voltage)}$
to generate $\tree^{(\voltage)}_\text{DCT}$ and $\tree_\text{K}^{(\voltage)}$,
respectively, and with data $\treeSnaps_{\dofRecVoltage, :}$ and leaf
basis $\leafBasis_\text{DCT}^{(\recVoltage)}$ and
$\leafBasis_{T}^{(\recVoltage)}$ to generate
$\tree^{(\recVoltage)}_\text{DCT}$ and $\tree_\text{K}^{(\recVoltage)}$,
respectively. We then construct $\tree_\text{DCT}$ and $\tree_\text{K}$ by
grafting $\tree_\text{DCT}^{(\voltage)}$, $\tree_\text{DCT}^{(\recVoltage)}$ and
$\tree_\text{DCT}^{(\voltage)}$, $\tree_\text{DCT}^{(\recVoltage)}$,
respectively, to separate root nodes representing $\Rn$.

Fig.\ \ref{fig:pareto_fhn} reports performance Pareto fronts that enable a
fair comparison between four variants of the proposed method arising from two
different leaf bases $\leafBasis_\text{DCT}$ and $\leafBasis_\text{K}$, and
enabling or disabling online basis compression.  In particular, for every
combination of the hyperparameters reported in table
\ref{tab:hyper_param_fhn}, we simulate the adaptive ROM and record its
relative $\ell^2$ state error and its mean basis dimension.  Then, for each of
the four methods, we plot the performance associated with Pareto-dominant
hyperparameter values, which are those for which no other hyperparameter value
produces strictly better performance in both relative $\ell^2$ error and mean
basis dimension.  We note that $N_\text{reset}$ corresponds to the frequency
with which basis compression or basis reset (to the original $\initBasisZ$) is
performed, depending on whether or not online basis compression is enabled.

Fig.\ \ref{fig:pareto_fhn} shows that both of the two major contributions of
this work, namely the ability to prescribe different refinement mechanisms and
the ability to perform basis compression, significantly improve the method's
performance relative to that of the  original $h$-refinement method. In
particular, we note that the performance benefits from these contributions
stack, i.e., using the basis $\leafBasis_\text{DCT}$ together with basis
compression yields the best performance.  We note that the ROM with the
(fixed) original basis of dimension three yielded 387\% relative error.

For illustrative purposes, fig.\ \ref{fig:printout_fhn} provides a simultaneous printout of the solutions to the full-order model, the reduced-order model with our refinement algorithm, and the base reduced-order model without any refinement. As we can see, the base reduced-order model completely fails to produce the appropriate dynamics of the full-order model, while our refinement algorithm manages to recover the appropriate dynamics with good accuracy.

In fig.\ \ref{fig:splitting_fhn}, we provide a visualization of the refinement of a ROM basis vector during simulation. We note that, in this instance, our method opts to split only dynamical variables associated with the variable $\voltage$.

We also provide an example of a basis compression step performed during
simulation in fig.\ \ref{fig:basis_compression_fhn}. This figure visualizes how our algorithm can significantly reduce refined ROM dimension while maintaining the ability to represent solutions at previous time steps.

\begin{table}
\centering
\begin{tabular}{|l|c|}
\hline
Hyperparameter & Test Values \\
\hline
Tree Topology: Number of Children ($k$) & $2, 4, 8, 12$ \\
Child Grouping\footnote{See \cite{carlberg2015adaptive} for a description of child grouping.} & true, false \\
Number of Time Steps Between Basis Resets / Compressions ($N_{reset}$) & $10, 25, 50, 75$ \\
Full-Order Model Tolerance ($\tol$) & \begin{tabular}{@{}c@{}}$0.01, 0.005, 0.002, 0.001,$ \\ $0.0005, 0.0002, 0.0001, 0.00005$ \end{tabular} \\
Reduced-Order Model Tolerance ($\romTol$) & $10^{-8}$ \\
\hline
\end{tabular}
\\[10pt]
\caption{Marginal hyper-parameter choices for the Pareto fronts reported in
	fig.\ \ref{fig:pareto_fhn}.}
\label{tab:hyper_param_fhn}
\end{table}

\begin{figure}
\centering
\includegraphics[scale=0.5]{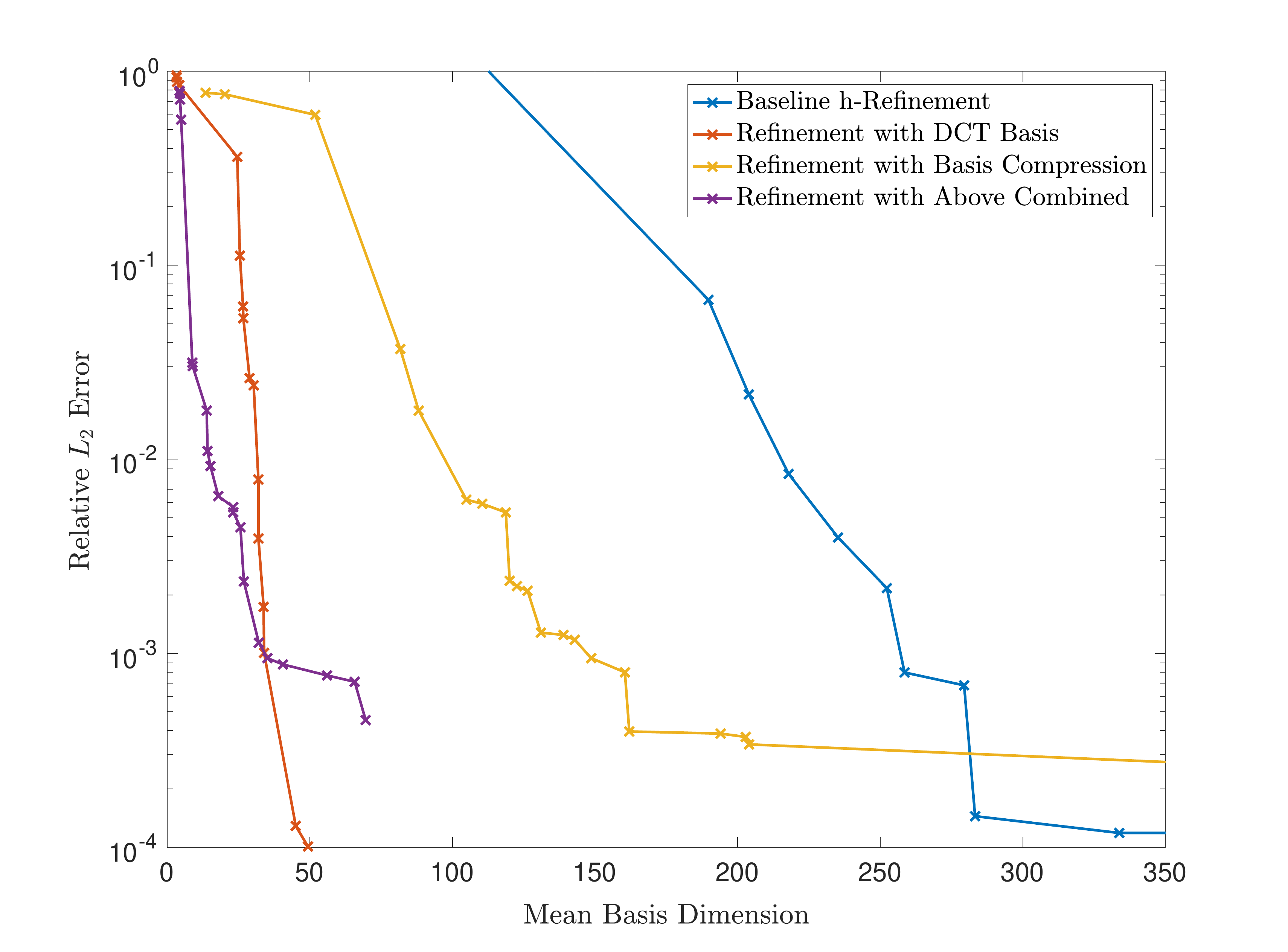}
\caption{\textbf{Pareto front comparison for FitzHugh--Nagumo system}. This figure contains Pareto fronts computed for four different versions of our method on the
	FitzHugh--Nagumo example. The method executed with leaf basis
	$\leafBasis_\text{K}$ and no basis compression, shown in blue, is identical
	to the original $h$-refinement method and serves as a baseline for
	performance comparison. The other variants correspond to the method executed
	with leaf basis $\leafBasis_\text{DCT}$ and no basis compression, shown in
	orange; the method performed with leaf basis $\leafBasis_\text{K}$ and
	basis compression, shown in yellow; and the method with
	$\leafBasis_\text{DCT}$ and basis compression, shown in purple. In this
	case, the ROM executed with the fixed initial basis $\initBasisZ$ of
	dimension $3$ yielded $387 \%$ error. The proposed method is able to reduce
	this error to arbitrarily low levels while maintaining reasonable basis
	dimensions. Furthermore, note that the major contributions of this paper,
	the ability to specify any arbitrary leaf basis, and the ability to perform
	basis compression, both lead to very significant performance increases over
	the original $h$-refinement method.} 
	\label{fig:pareto_fhn}
\end{figure}

\begin{figure}
\centering
\includegraphics[scale=0.35]{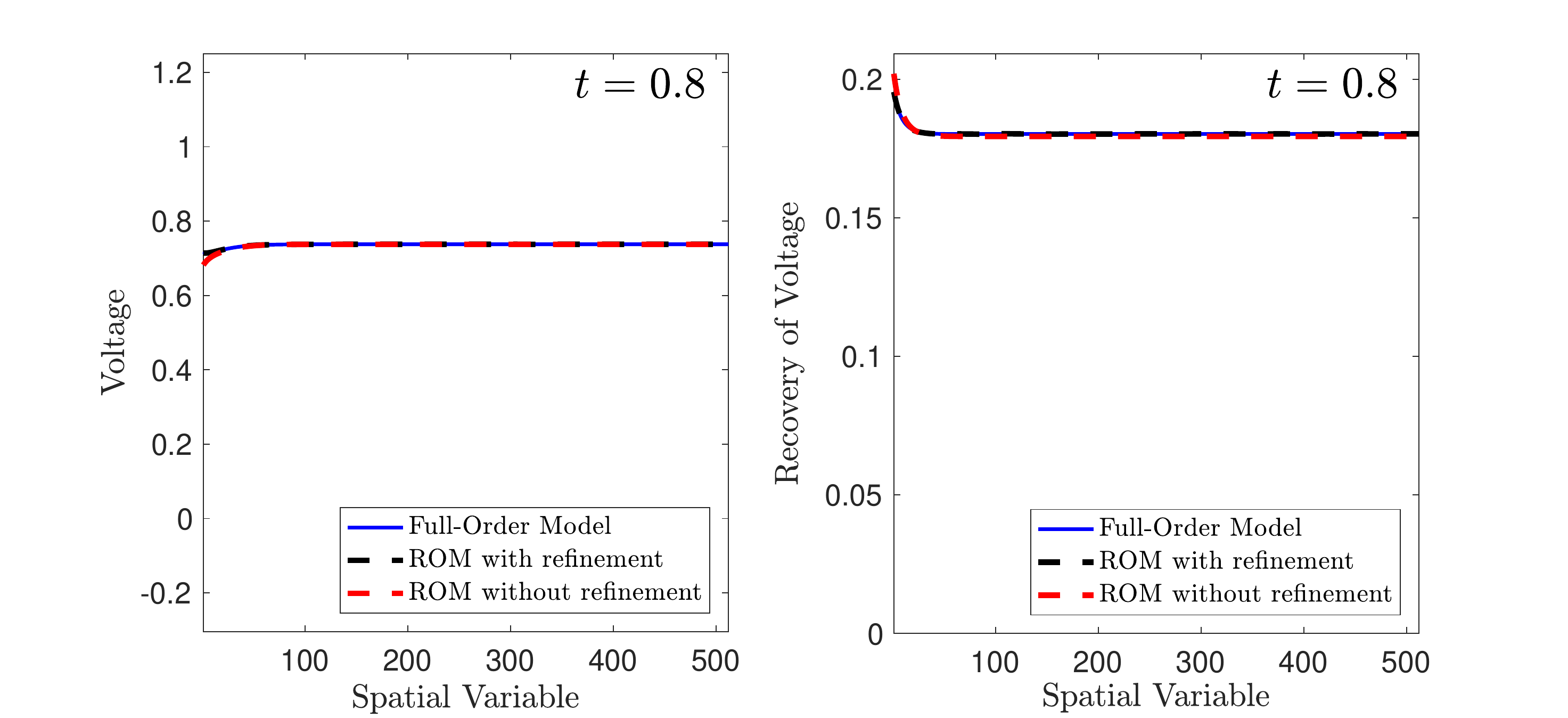}
\includegraphics[scale=0.35]{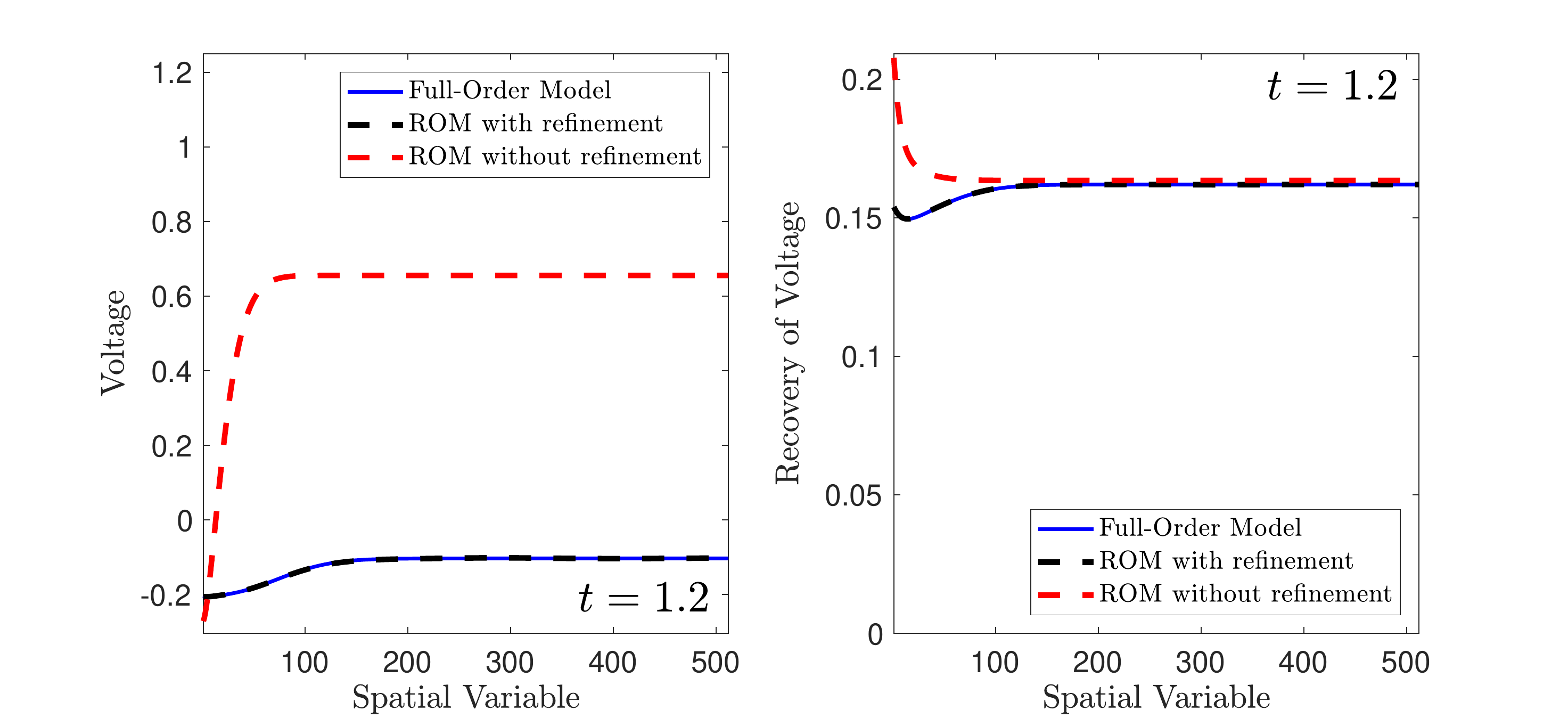}
\includegraphics[scale=0.35]{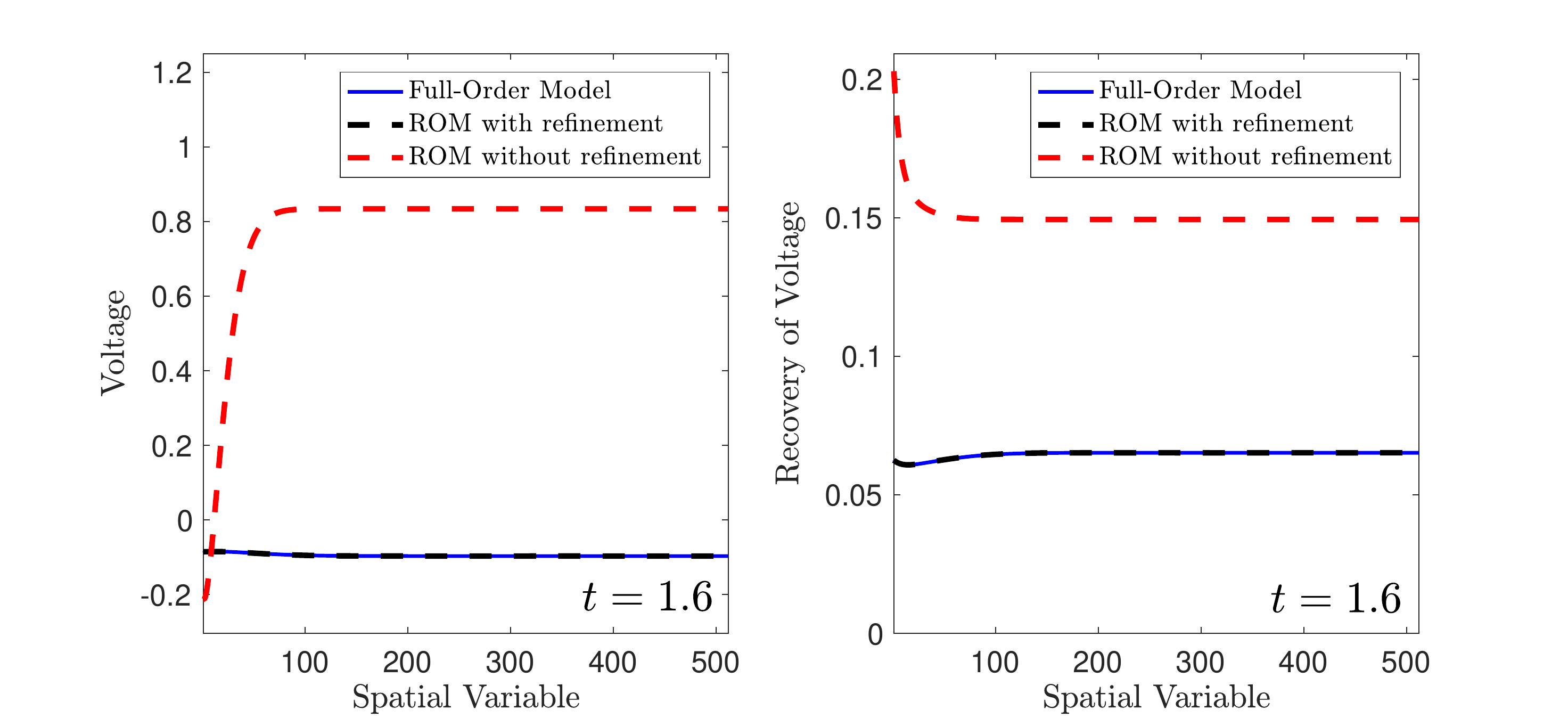}
\includegraphics[scale=0.35]{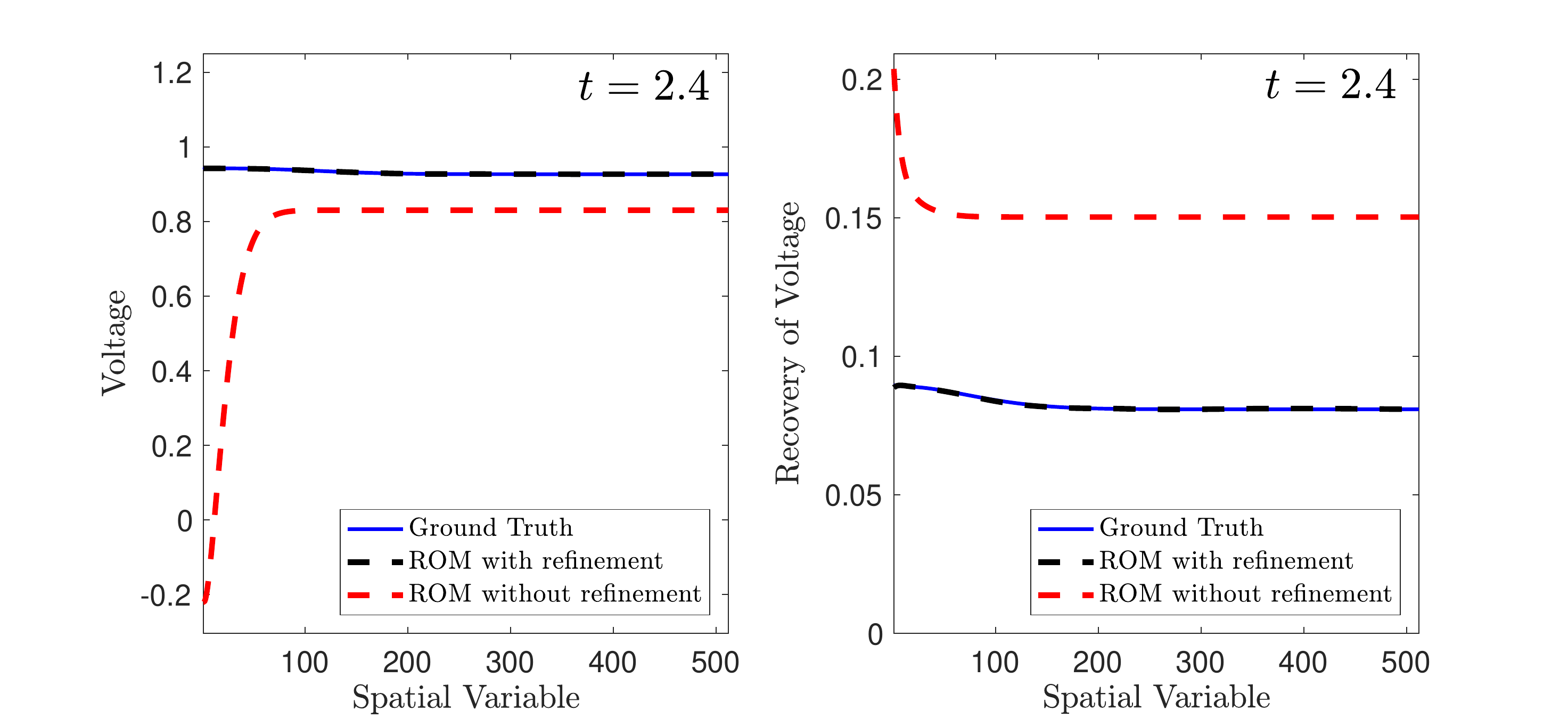}
\caption{\textbf{Comparison of FitzHugh-Nagumo Solutions}. This figure compares solutions $v(x)$ (Voltage) and $w(x)$ (Recovery of Voltage) to the FitzHugh--Nagumo
	system in section \ref{sec:fitzhugh} above at four different times $t$. The
	full-order model solution in shown in blue, while the base reduced-order
	model we use is shown in dotted red. The base reduced-order model combined
	with our refinement algorithm is shown in dotted black. In particular, we
	use a DCT refinement tree with $8$ children, child grouping enabled, a
	compression frequency of $25$ and a full-order model tolerance of $\tol =
	0.0005$. The mean basis dimension of our refined ROM is $48$ (for
	comparison, the problem dimension is $1024$). The dynamics of this
	FitzHugh--Nagumo System involve bouncing back and forth between two regions
	in phase space. These bounces (i.e., neural spikes) happen on a very short
	time scale and are very difficult for the base ROM to capture. Indeed, as we see
	above, the base ROM is incapable of resolving these spikes and hence remains
	stuck in one region of phase space for the entirety of the simulation. Our
	refinement algorithm allows this behavior to be resolved very precisely,
	with final relative $\ell^2$ error of $0.7 \%$.}
\end{figure}

\begin{figure} \label{fig:printout_fhn}
\centering
\includegraphics[scale=0.6]{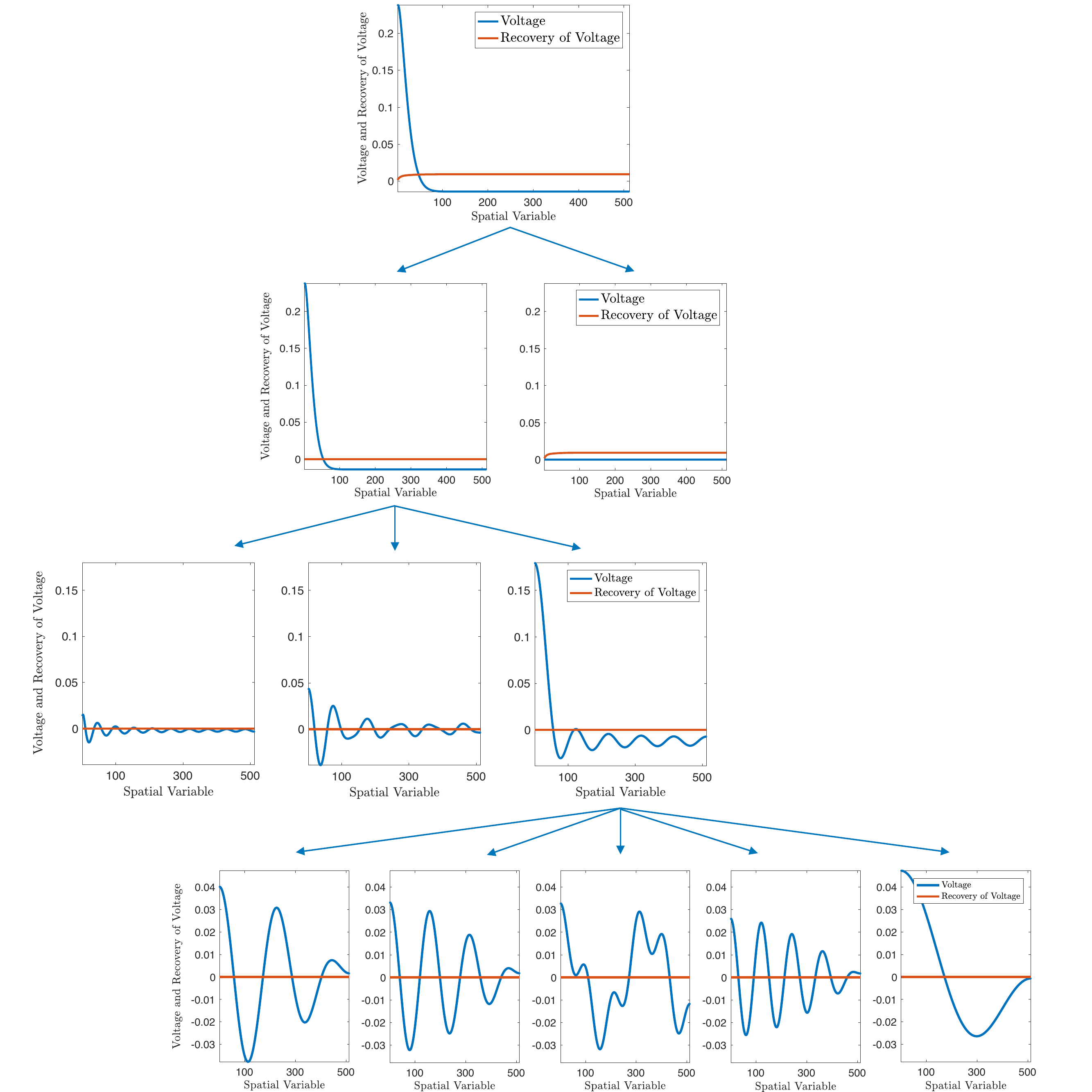}
\caption{\textbf{An illustration of the refinement of a ROM basis vector} as it is
	sieved through the refinement tree in the FitzHugh--Nagumo case study above.
	The degrees of freedom are laid out as described in section
	\ref{sec:fitzhugh}. The first split corresponds to the vector space
	decomposition $\R{\stateVecSize} = \parentSpace^{(\voltage)} +
	\parentSpace^{(\recVoltage)}$, decoupling the degrees of freedom
	corresponding to $\voltage$ and $\recVoltage$, respectively. After this
	first refinement, the next two refinements the refinement algorithm performs
	two band-pass filters to decouple the low frequencies on the degrees of
	freedom corresponding to $\voltage$. Typically, the solutions to the
	FitzHugh--Nagumo system in section \ref{sec:fitzhugh} are very smooth on the
	computational domain, with the exception of at the boundary. Hence, by
	isolating low frequencies components from the original ROM basis vector, our
	refined model is able to efficiently represent smooth data, leading to
	significantly lower average basis dimension than if one had used a
	refinement tree corresponding to the Kronecker basis. This performance
	increase is evident in fig.\ \ref{fig:pareto_fhn}.}
\end{figure}

\begin{figure} \label{fig:splitting_fhn} \label{fig:basis_compression_fhn}
\centering
\includegraphics[scale=0.5]{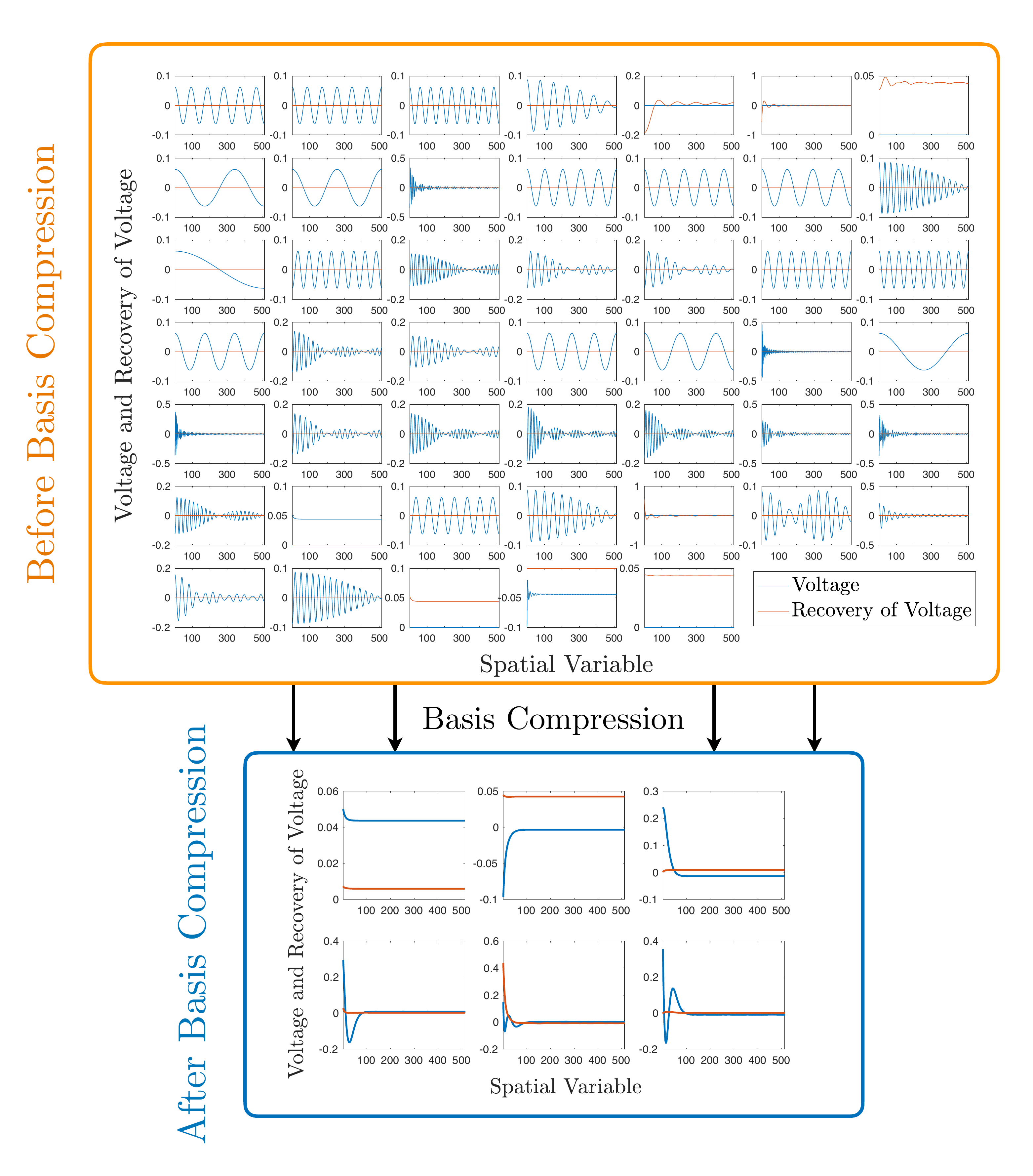}
\caption{\textbf{Visualization of basis compression}. This figure presents a visualization of our basis compression algorithm on the
	FitzHugh--Nagumo system in section \ref{sec:fitzhugh}. The compression
	algorithm allows us to reduce the dimension of our refined ROM without
	losing the ability to represent solution data at previous time steps. In
	this case, the compression algorithm determines the important fidelity added
	to the ROM via refinement is concentrated on the left-hand side of the
	domain, and the remainder of the additional representative power of the
	refined ROM can be discarded without significantly impacting the ROM's
	ability to capture solution data at previous time steps. If the underlying
	system exhibits some degree of temporal coherence, then this technique can
	lead to significant reduction in average basis dimension over simply
	resetting the ROM, as seen in fig.\ \ref{fig:pareto_fhn}.}
\end{figure}

\subsection{Nonlinear transmission line model} \label{sec:nonlinear_transmission}

The second example we consider corresponds to a nonlinear transmission line
model \cite{white2003trajectory}, which simulates the behavior of a particular
circuit consisting of resistors, capacitors, and diodes and has been used as a
benchmark problem for model-reduction techniques. The system consists of a
collection of $\stateVecSize = 100$ nodes, with the unknowns corresponding to
the voltages $v_1, \ldots, v_\stateVecSize$ at each of these nodes such that
the state vector is
$\stateVec = \left[\begin{array}{ccc} v_1 & \ldots & v_\stateVecSize
\end{array}\right]^T$.
The dynamics can be expressed as a system of nonlinear ODEs
\begin{equation}
\frac{d\stateVec}{dt} = \velocity(\stateVec) + \mat{B} \, u(t) \,,
\end{equation}
where the velocity $\velocity$ is 
\begin{equation}
	\velocity:\stateVec \mapsto \left[\begin{array}{cccccc} 
-2 	& 1 		& 			& 			& & \\
1 	& -2		& 1			&			& & \\
	& 1 		& \ddots 	& \ddots 	& & \\
	&		& \ddots 	& -2			& 1 &\\
	&		&			& 1 			& -2 & 1\\
	&		&			&			& 1 	 & -2 
	\end{array}\right] \stateVec + \left[\begin{array}{c} 
	
	2 - \exp(40x_1) - \exp(40(x_1 - x_2)) \\
	\exp(40(x_1 - x_2)) - \exp(40(x_2 - x_3)) \\
	\exp(40(x_2 - x_3)) - \exp(40(x_3 - x_4)) \\
	\vdots \\
	\exp(40(x_{n-2} - x_{n_1})) - \exp(40(x_{n - 1} - x_n)) \\
	\exp(40(x_{n - 1} - x_n)) - 1
	\end{array}\right] \,,
\end{equation}
and the input matrix is
$\mat{B} \equiv \left[\begin{array}{cccc} 1 & 0 & \cdots & 0
\end{array}\right]^T$ such that the input current
$u(t)$  enters the first node. 
The initial condition is given by
$
\left.\stateVec\right|_{t = 0} = \ve{0}$. 
To numerically integrate the governing ODE system in the time interval
$\mathbb T = [0,10]$, we employ the backward-Euler scheme
with a time step size $\Delta t = 0.01$.

As in the previous example, we again demonstrate that the proposed method's
ability to consider general refinement mechanisms and perform online basis
compression can yield substantial performance improvements over the original
$h$-refinement method. Once again, we construct two refinement trees
$\tree_\text{DCT}$ and $\tree_\text{K}$ associating with a discrete cosine
transform leaf basis and a standard Kronecker leaf basis, respectively. 

We consider a predictive scenario wherein training is executed for a training
input $u(t)=u_\text{train}(t)\defeq 1 - {t}/{50}$ and testing is executed for an online input
$u(t)=u_\text{test}(t)\defeq 1/2(\cos(2 \pi t/10) + 1)$. 
We set the initial basis $\initBasisZ$ to the first $\initRomSize=4$ POD vectors
of snapshot data $\snaps \in \R{\stateVecSize \times \compSnaps}$ with
$\compSnaps=1000$ collected at the training input.

To generate the trees $\tree_\text{DCT}$ and $\tree_\text{K}$, we use the
procedure outlined in section \ref{sec:tree_construction}, with
\begin{equation}
\begin{split}
\leafBasis_\text{DCT} &\equiv \mat{M}^{(\stateVecSize)}_\text{DCT} \,, \\
\leafBasis_K &\equiv \mat{I}^{(\stateVecSize)} \,,
\end{split}
\end{equation}
where $\mat{M}^{(\stateVecSize)}_\text{DCT}$ is the $\stateVecSize \times
\stateVecSize$ DCT-II matrix and $\mat{I}^{(\stateVecSize)}$ is the
$\stateVecSize \times \stateVecSize$ identity. We use the snapshot data
$\snaps$ as the input to the tree-construction procedure.

Fig.\ \ref{fig:pareto_nltrans} reports the resulting Pareto fronts, which
arise from varying all hyperparameters according to values in table
\ref{tab:hyper_param_nltrans}.  Once again, we observe the new contributions
of this work to yield significant performance improvements over the original
$h$-refinement method. In particular, using the DCT basis with online basis
compression clearly yields the best performance.
We note that the ROM with the
(fixed) original basis of dimension three yielded 20.2\% relative error. Just
like in the FighHugh-Nagumo example, we provide another simultaneous printout
of the solutions to the FOM, refined ROM, and unrefined ROM in fig.\ \ref{fig:printout_nltrans}. We note again that the unrefined ROM fails to fully resolve the physics of the FOM, but the refined ROM performs quite well. 

\begin{table}
\centering
\begin{tabular}{|l|c|}
\hline
Hyperparameter & Test Values \\
\hline
Tree Topology: Number of Children ($k$) & $2, 4, 8, 12$ \\
Child Grouping & true, false \\
Number of Time Steps Between Basis Resets / Compressions ($N_{reset}$) & $10, 25, 50, 75$ \\
Full-Order Model Tolerance ($\tol$) & \begin{tabular}{@{}c@{}}$0.01, 0.005, 0.002, 0.001,$ \\ $0.0005, 0.0002, 0.0001, 0.00005$ \end{tabular}
 \\
Reduced-Order Model Tolerance ($\romTol$) & $10^{-8}$ \\
\hline
\end{tabular}
\\[10pt]
\caption{Marginal hyper-parameter choices for Pareto plot shown in fig.\ \ref{fig:pareto_nltrans}.}
\label{tab:hyper_param_nltrans}
\end{table}

\begin{figure}
\centering
\includegraphics[scale=0.5]{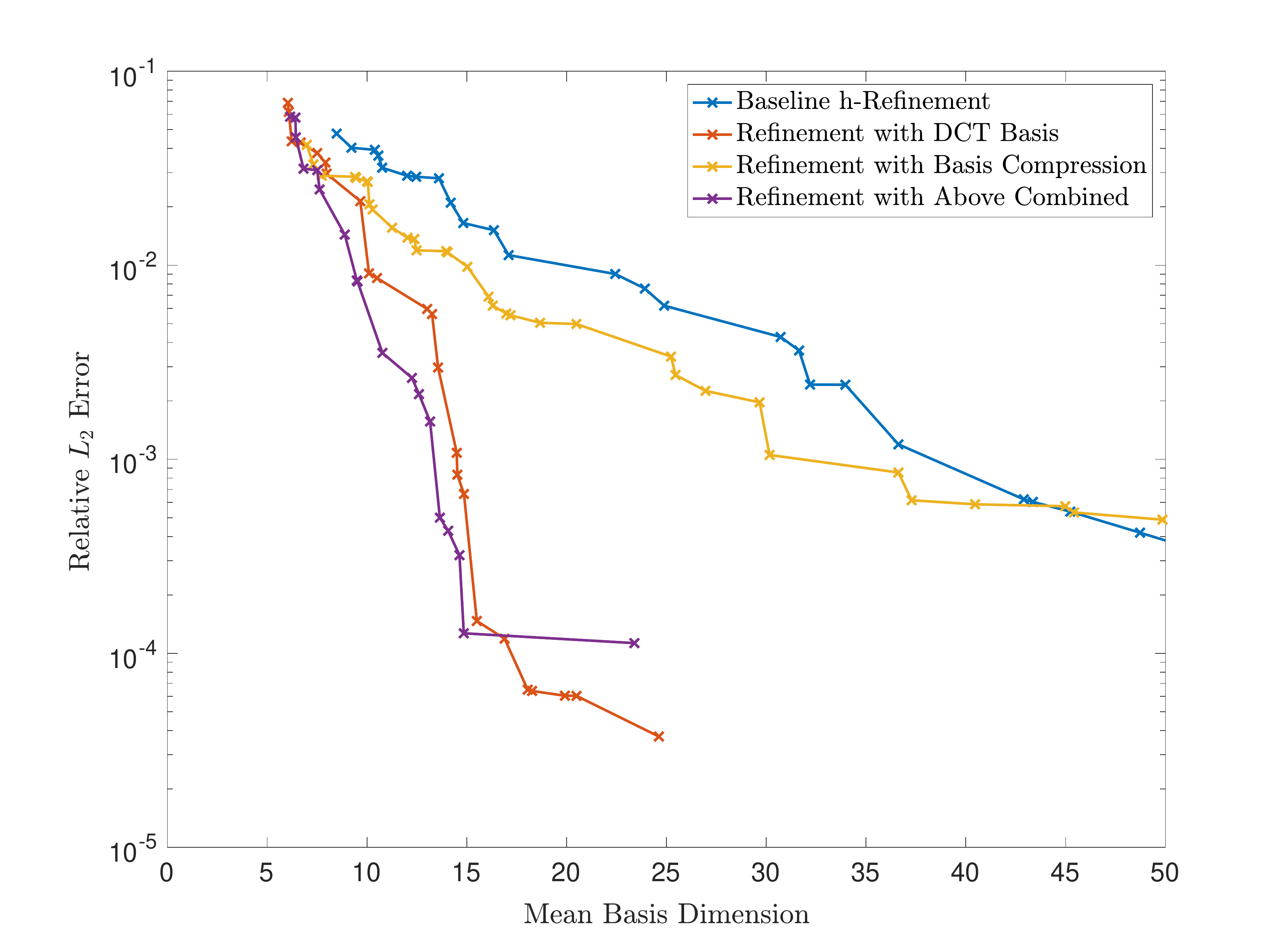}
\caption{
	\textbf{Pareto front comparison for the Nonlinear Transmission Line system}. This figure contains Pareto fronts computed for four different versions of our method on the
	Nonlinear Transmission Line example.
	The method executed with leaf basis
	$\leafBasis_\text{K}$ and no basis compression, shown in blue, is identical
	to the original $h$-refinement method and serves as a baseline for
	performance comparison. The other variants correspond to the method executed
 the method performed with leaf basis
	$\leafBasis_\text{DCT}$ and no basis compression, shown in orange; the
	method performed with leaf basis $\leafBasis_\text{K}$ and basis
	compression, shown in yellow; and the method with $\leafBasis_\text{DCT}$
	and basis compression, shown in purple. 
	In this
	case, the ROM executed with the fixed initial basis $\initBasisZ$ of
	dimension $4$ yielded $20.2 \%$ error.
	Again, the proposed method is able to reduce this error to arbitrarily low
	levels while maintaining reasonable basis dimensions. Furthermore, note that
	the major contributions of this paper, the ability to specify any arbitrary
	leaf basis, and the ability to perform basis compression, both lead to very
	significant performance increases over the original $h$-refinement method.}
\label{fig:pareto_nltrans}
\end{figure}

\begin{figure} \label{fig:printout_nltrans}
\centering
\includegraphics[scale=0.4]{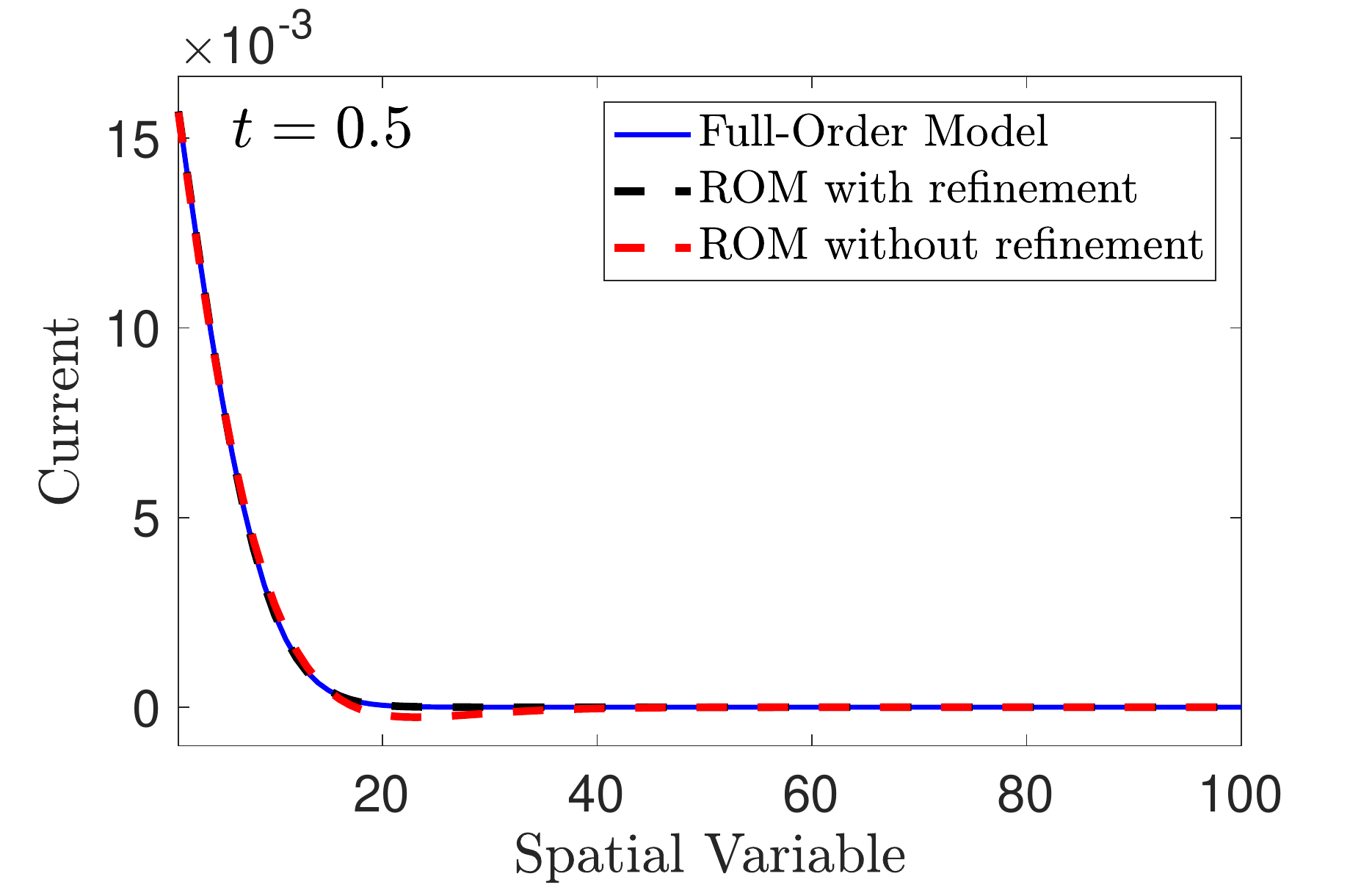}
\includegraphics[scale=0.4]{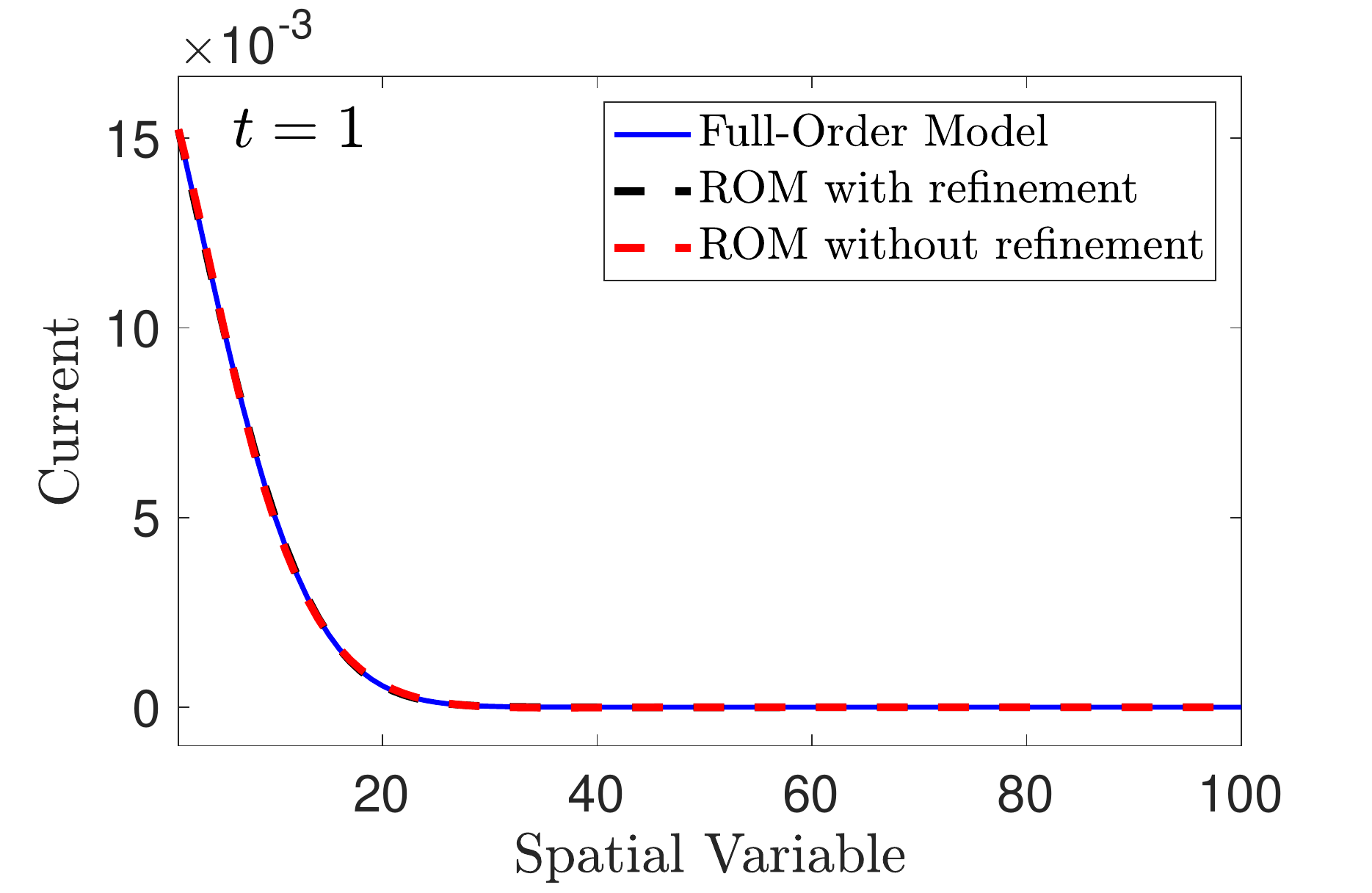}
\includegraphics[scale=0.4]{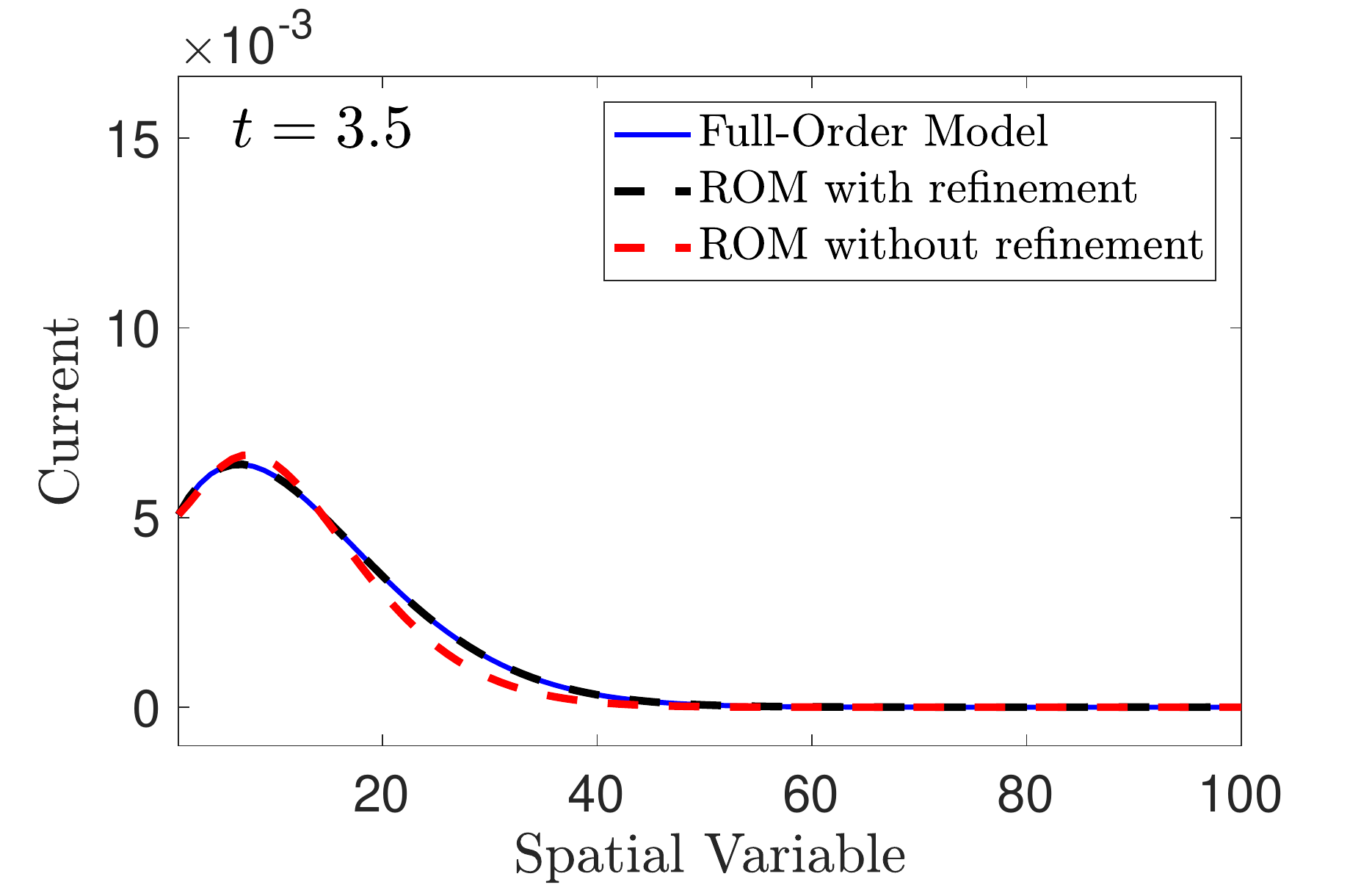}
\includegraphics[scale=0.4]{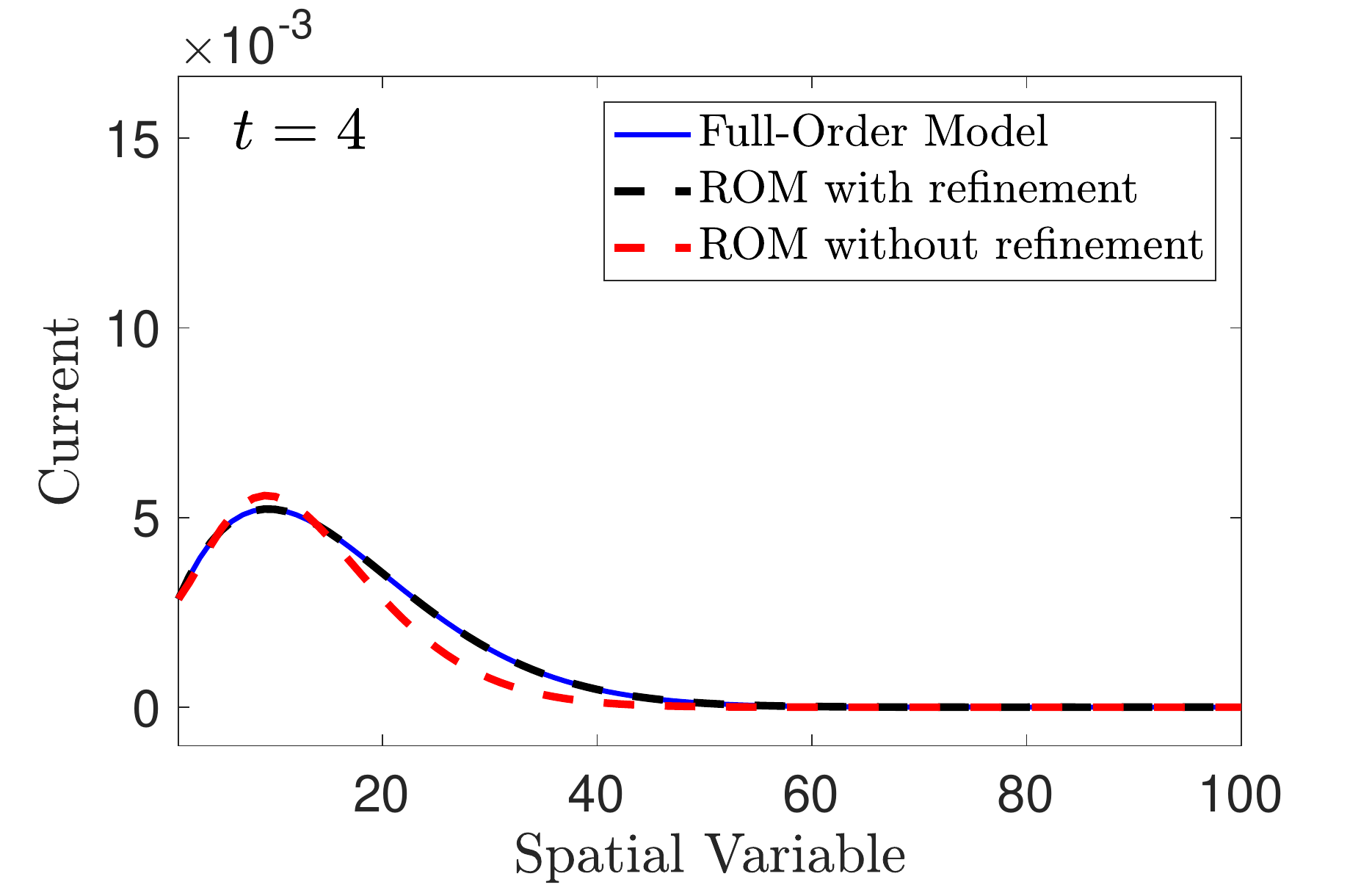}
\includegraphics[scale=0.4]{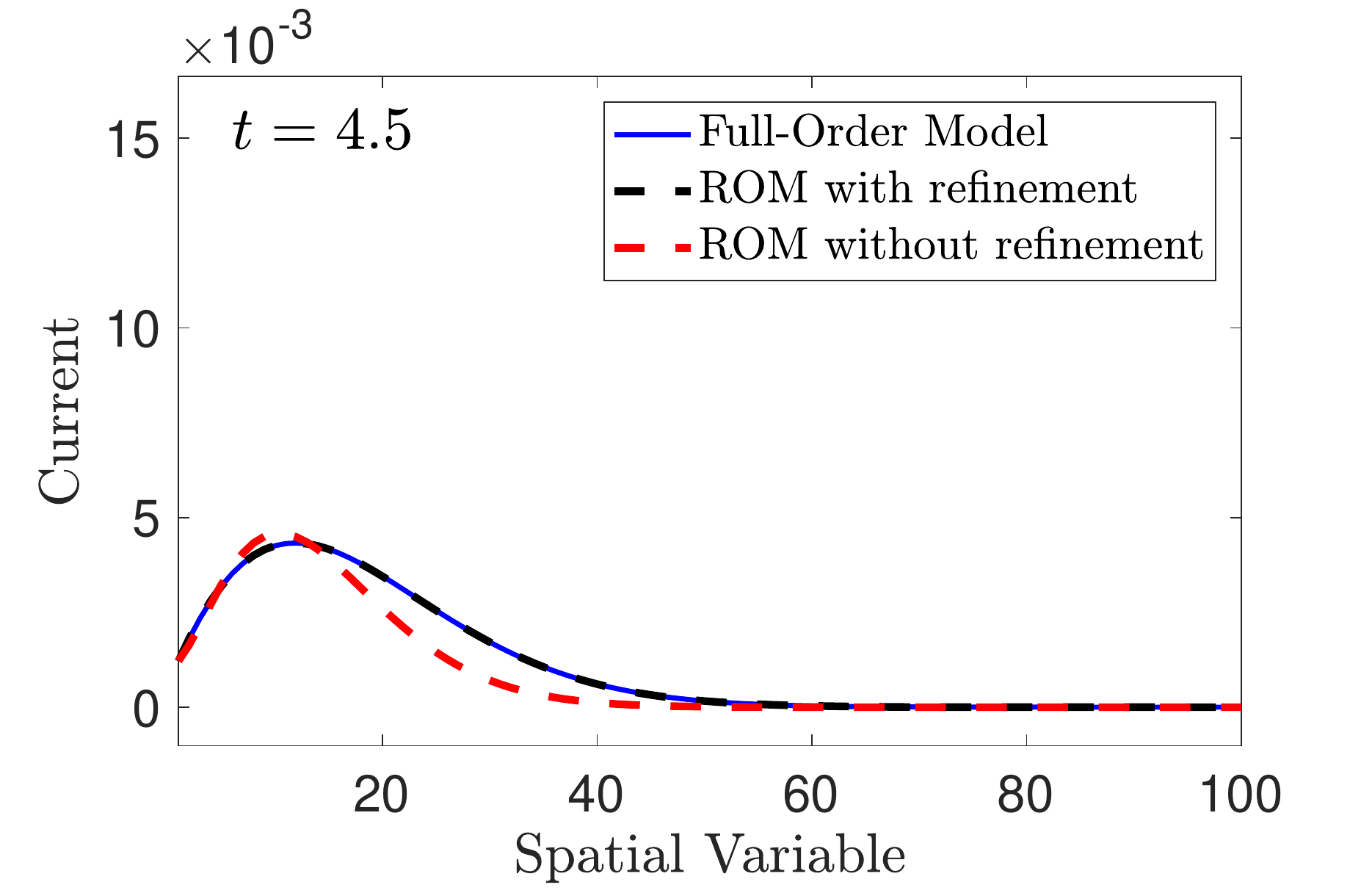}
\includegraphics[scale=0.4]{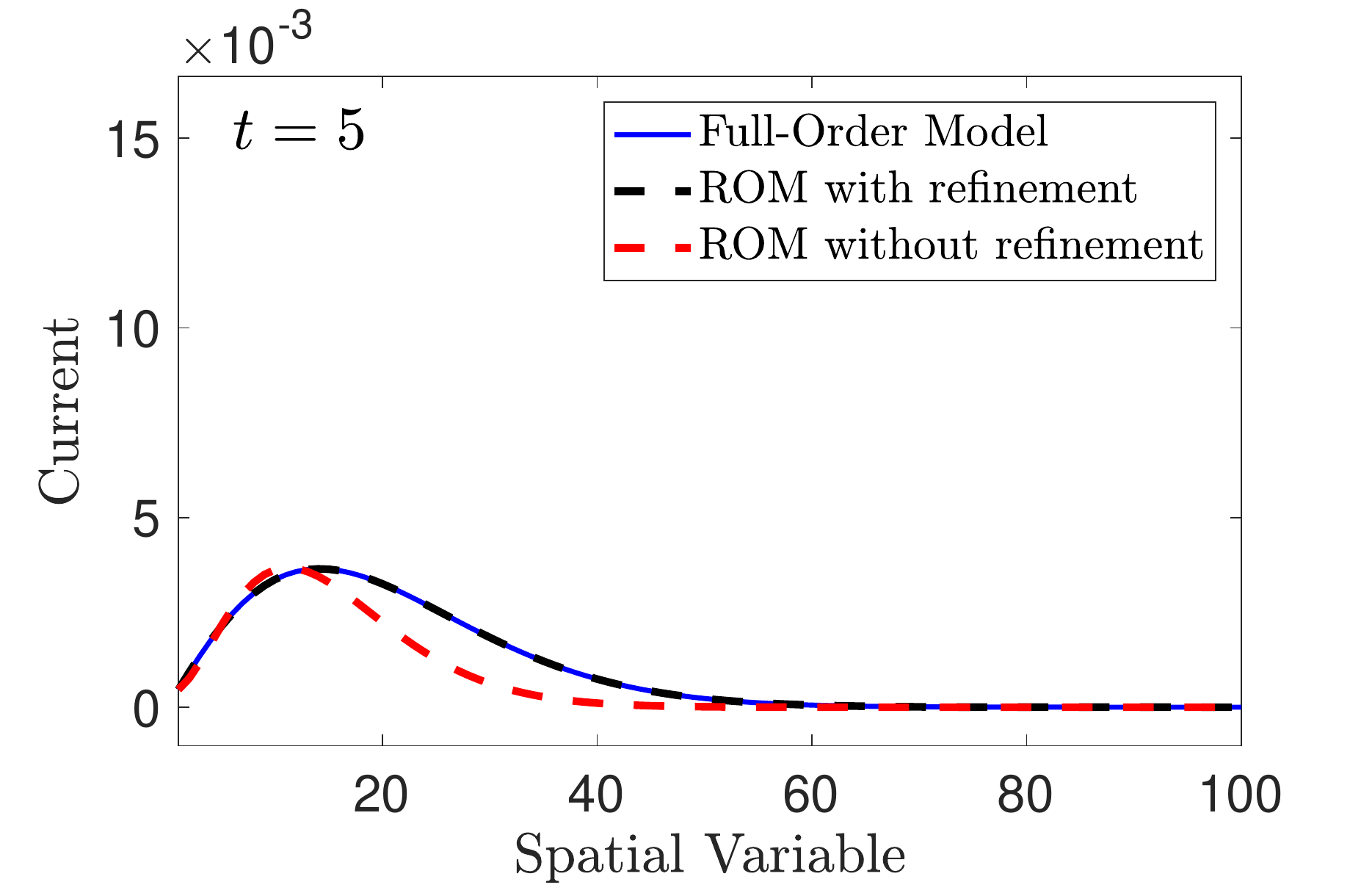}
\caption{\textbf{Comparison of Nonlinear Transmission Line Solutions}. This figure shows an illustration of the output of our algorithm on the nonlinear
	transmission line system in section \ref{sec:nonlinear_transmission} at different times $t$. The
	full-order model solution in shown in blue, while the base reduced-order
	model we use is shown in dotted red. The base reduced-order model combined
	with our refinement algorithm is shown in dotted black. In particular, we
	use a DCT refinement tree with $8$ children, child grouping enabled, a
	compression frequency of $25$ and a full-order model tolerance of $\tol =
	0.001$. The mean basis dimension of our refined ROM is $13$ (for comparison,
	the problem dimension is $100$). The dynamics of this Nonlinear Transmission
	Line System involves relaxation from a pulse at $t = 0$. We note that the
	base ROM performs well initially, but fails to resolve later times
	correctly. On the other hand, our refinement algorithm allows this behavior
	to be resolved, with final relative $\ell^2$ error of $1.9 \%$.}
\end{figure}

\section{Conclusions}\label{sec:conclusions}

This work has proposed an online adaptive basis refinement and compression
method for reduced-order models. The principal new contributions of the method
include:
\begin{enumerate}
\item A mathematical framework, presented in sections
	\ref{sec:math_framework}, \ref{sec:basis_refinement}, and
		\ref{sec:tree_construction}, that generalizes the original ROM
		$h$-refinement method \cite{carlberg2015adaptive}, as it enables a general
		basis-refinement mechanism based on recursive vector-space decompositions.
		This allows for custom tailoring of the ROM refinement mechanism to the
		particular problem, which we have demonstrated can significantly improve
		performance over the original approach.
\item A novel online basis-compression algorithm, presented in section
	\ref{sec:online_basis_compression}, which controls the dimension of the
		refined basis online while ensuring an $\stateVecSize$-independent
		operation count.  We have demonstrated that this aspect of the method
		enables additional substantial performance improvements over the original
		approach.
\end{enumerate}

The proposed approach distinguishes itself from existing approaches for online
ROM adaptivity in that it avoids any FOM solves, yet it also ensures monotone convergence to the full-order
model, as proved in theorems \ref{thm:convergence} and \ref{thm:monotonicity}.

Future research directions include integrating hyper-reduction techniques
such as collocation, empirical interpolation, or gappy POD into the proposed
framework.  This would entail adaptively
adding/removing residual sampling points to maintain well-posedness of the
refined-ROM system; one challenge here involves ensuring the 
sampling pattern is `compatible' with the refined basis, so
that one does not run into the scenario where, for example, many sample points
fall outside the supports of refined basis vectors. 
Another direction for investigation entails devising approaches to reduce the
storage requirements for leaf vector spaces with global support, either by
using an \textit{on-the-fly} computation approach (as we employ to compute the
metric $\romBasis^T \romBasis$) or perhaps via a sampling approach.
Finally, future work entails applying the proposed method to a truly
large-scale industrial problem.

\appendix 

\section{Modified splitting algorithm} \label{sec:mod_split}

In this appendix, we give a pseudocode integration of the conditioning technique we presented in section \ref{sec:illCond} into the refinement algorithm \ref{alg:refine_proc}. To perform this integration, we must maintain a list $\mathcal{I} \subset \bigsqcup_i \frontier_i$ of inactive vertices in the frontiers $\frontier_1, ..., \frontier_{\initBasisSize}$. After refinement, whenever a vertex is detected by the ill-conditioning check in \ref{sec:illCond}, we place it into the list $\mathcal{I}$. Afterwards, during the frontier refinement algorithm, specifically during the computation of the full refinement $\frontierglobalF$ of the global frontier $\frontierglobalC \equiv \bigsqcup_i \frontier_i$ we deliberately exclude all of the vertices in $\mathcal{I}$ from taking part in refinement. Furthermore, we also implement basis vector rescaling, which maintains a scale factor for every active frontier element in $\bigsqcup_i \frontier_i \setminus \mathcal{I}$ , as described in section \ref{sec:illCond}. 

Below, in algorithms \ref{alg:mod_err}, \ref{alg:mod_refine_frontiers}, \ref{alg:mod_refine_proc}, we give the modified versions of the algorithms \ref{alg:err}, \ref{alg:refine_frontiers}, \ref{alg:refine_proc} respectively as described in section \ref{sec:illCond}. Note that in algorithm \ref{alg:mod_refine_proc}, the implementation of $\textsc{HandleFullInactiveVertices}$ is deferred to the user. This routine is triggered when all available refinement options have been systematically deactivated by the conditioning technique. If the value of the cutoff $\epsilon_{\text{QR}}$ used for the conditioning technique is too high, it is possible that we may disable all available refinement options before we converge to a solution that is within the desired tolerance. In order to address this, there are a number of things the user can do to reactivate vertices which have been deactivated if this corner case is reached. two possibilities include:
\begin{enumerate}
\item Simply resetting $\mathcal{I}$ to empty and halving the value of $\epsilon_{\text{QR}}$.
\item Performing a column-pivoted QR decomposition of $\left[\begin{array}{ccc}
\matsieve{\initBasisCol_1}{\frontier_1} & \cdots & \matsieve{\initBasisCol_{\romSize_0}}{\frontier_{\initRomSize}}  \end{array}\right]$ to determine which elements of $\mathcal{I}$ to reactivate. Optionally, one can modify $\epsilon_{\text{QR}}$ by examining the diagonal values of the above QR decomposition.
\end{enumerate}
The specific implementation may depend on the use-case, and hence is left up to the user. 

\begin{algorithm}
\caption{Modified computation of Error Indicators} \label{alg:mod_err}
\hspace*{\algorithmicindent} \textbf{Input}: The current coarse basis $\romBasisC_*$, the current frontiers $\frontier_1, \ldots, \frontier_{\romSize_0}$, the set of inactive vertices $\mathcal{I}$. \\
\hspace*{\algorithmicindent} \textbf{Output}: The fine error indicators $\errorIndF$.
\begin{algorithmic}[1]
\Procedure{ComputeErrorIndicators}{$\romBasisC_*$, $\frontier_1, \ldots, \frontier_{\initRomSize}, \mathcal{I}$}
\State $\frontierglobalC \gets \left(\bigsqcup_i \frontier_i \right)\setminus \mathcal{I}$
\State $\frontierglobalF \gets \bigsqcup_i \left(\bigcup_{\parentSpace \in \frontier_i \setminus \mathcal{I}} \tRefine{\parentSpace}{\tree}\right) $ \Comment{To compute the refined global frontier $\frontierglobalF$, we refine all spaces in $\frontierglobalC$ except those marked as inactive.}
\State $\prolong \gets \textsc{ComputeProlongationOperator}\left(\frontierglobalC, \frontierglobalF\right)$ \Comment{Compute prolongation operator using Eq.\ (\ref{eq:prolong_def})}
	\State\label{step:mod_coarseAdjoint} $\adjointC \gets \left[(\romBasisC_*)^T \frac{\partial \gResidual}{\partial \stateVec} (\romBasisC_* \romStateVecC)^T \romBasisC_* \right]^{-T} \left[ (\romBasisC_*)^T \frac{\partial \interestFunc}{\partial \stateVec} \left(\romBasisC_* \romStateVecC\right)^T \right]$ \Comment{Compute the coarse adjoint using Eq.\ (\ref{eq:coarse_adjoint})}
\State $\prolongadjoint \gets \prolong \adjointC$ \Comment{Prolongate coarse adjoint to fine coordinate space.}
\State $\errorIndF \gets \ve{0} \in \R{\frontierglobalF}$
\For{$\childSpace \in \frontierglobalF\setminus\treeLeaves$} 
	\State \label{step:mod_errorIndicators}$\errorIndF_\childSpace \gets  \left|\left[\prolongadjoint\right]_\childSpace \left(\romBasisCol_\childSpace^h\right)^T \gResidual\left(\romBasisC \romStateVecC\right)\right| $\Comment{Compute the error indicators using Eq.\ (\ref{eq:error_ind})}
\EndFor
\State \Return $\errorIndF$
\EndProcedure
\end{algorithmic}
\end{algorithm}

\begin{algorithm}
\caption{Modified computation of Refined Frontiers} \label{alg:mod_refine_frontiers}
\hspace*{\algorithmicindent} \textbf{Input}: The $\Rn$-refinement tree $\tree$, the fine error indicators $\errorIndF$, the current frontiers $\frontier_1, \ldots, \frontier_{\romSize_0}$, the set of inactive vertices $\mathcal{I}$.  \\
\hspace*{\algorithmicindent} \textbf{Output}: A new set of frontiers $\frontier_1', \ldots, \frontier_{\initRomSize}'$ refined according to the input error indicators.
\begin{algorithmic}[1]
\Procedure{RefineFrontiers}{$\tree, \errorIndF, \frontier_1, \ldots, \frontier_{\initRomSize}$}
\State $\frontierglobalC \gets \left(\bigsqcup_i \frontier_i \right)\setminus \mathcal{I}$
\State $\frontierglobalF \gets \bigsqcup_i \left(\bigcup_{\parentSpace \in \frontier_i \setminus \mathcal{I}} \tRefine{\parentSpace}{\tree}\right) $ \Comment{To compute the refined global frontier $\frontierglobalF$, we refine all spaces in $\frontierglobalC$.}
\State $\ancestor \gets \textsc{GetGlobalAncestorMap}\left(\frontierglobalF, \frontierglobalC\right)$ \Comment{Compute the map in Eq.\ (\ref{eq:global_ancestor_map}) sending every space to its ancestor.}
\State $\prolong \gets \textsc{ComputeProlongationOperator}\left(\frontierglobalC, \frontierglobalF\right)$ \Comment{Compute prolongation operator using Eq.\ (\ref{eq:prolong_def}).}
\State $\errorIndC = \errorIndF \prolong$ \Comment{Compute coarse error indicators using Eq.\ (\ref{eq:coarse_err_prolong}).}
\State $\eta \gets \frac{1}{|\frontierglobalC|} \sum_{\parentSpace \in \frontierglobalC} \errorIndC_\parentSpace$ \Comment{Compute the average of the coarse error indicators.}
\State $S \gets \{ \parentSpace \in \frontierglobalC \mid \errorIndC_\parentSpace \geq \eta \}$ \Comment{Select the spaces in $\frontierglobalC$ whose coarse error indicator is greater than average.}
	\For{$i \in \nat{\initRomSize}$} \Comment{For each frontier $\frontier_i$}
	\State $S_i \gets \frontier_i \cap S$ \Comment{Extract the elements of $S$
	that came from $\frontier_i$.}
	\State $\frontier_i' \gets \pRefine{\frontier}{S_i}{\tree}$ \Comment{Refine the frontier $\frontier_i$ at these spaces.}
\EndFor
\State \Return $(\frontier_1', \ldots, \frontier_{\initRomSize}')$ \Comment{Return the refined frontiers.}
\EndProcedure
\end{algorithmic}
\end{algorithm}

\begin{algorithm}
\caption{Refinement Algorithm} \label{alg:mod_refine_proc}
\hspace*{\algorithmicindent} \textbf{Input}: $\Rn$-refinement tree
	$\tree$, initial basis $\initBasis$, current frontiers $\frontier_1, \ldots, \frontier_{\initRomSize}$,
	reference solution $\trialBasisOffset$, residual function $\gResidual$,
	ROM-residual tolerance $\romTol$, and FOM-residual 
	tolerance $\tol$, the set of inactive vertices $\mathcal{I}$ (initially empty), a cutoff $\epsilon_\text{QR}$ for the conditioning technique. \\
\hspace*{\algorithmicindent} \textbf{Output}: A new set of frontiers $\frontier_1', \ldots, \frontier_{\initRomSize}'$ refined according to the input error indicators, the new set of inactive vertices $\mathcal{I}'$.
\begin{algorithmic}[1]
\Procedure{SolveModel}{$\tree, \initBasis, \frontier_1, \ldots, \frontier_{\initRomSize}, \trialBasisOffset, \romTol, \tol, \mathcal{I}, \epsilon_\text{QR}$}
\While{\textsc{True}} \Comment{Refine the basis until the specified full-order tolerance is met.}
\If{$\left(\bigsqcup_i \frontier_i \right) \setminus \treeLeaves \subset \mathcal{I}$} \Comment{If there are no viable refinement options available because all of them have been deactivated.}
	\State $\textsc{HandleFullInactiveVertices}$ \Comment{Reactivate inactive vertices, specific implementation is left to the user. See above for suggestions.}
\EndIf
	\State\label{step:mod_sieve} $\romBasisC \gets \left[\begin{array}{ccc}
\matsieve{\initBasisCol_1}{\frontier_1} & \cdots & \matsieve{\initBasisCol_{\romSize_0}}{\frontier_{\initRomSize}}  \end{array}\right]$ \Comment{Retrieve the current coarse model basis.}
\State $\romBasisC_* \gets \romBasisC_{:, \left(\bigsqcup_i \frontier_i \right)\setminus \mathcal{I}}$ \Comment{Remove columns of $\romBasisC$ which correspond to vertices in $\mathcal{I}$}
\State $\sigma^H_* \gets \left\{ 1/ \|(\romBasisC_*)_{:, \parentSpace}\|_2 \mid  \parentSpace \in \left(\bigsqcup_i \frontier_i \right)\setminus \mathcal{I}\right\}$ \Comment{Get inverse norms of all columns in $\romBasisC$}
\State $\mat{\Sigma}^H_* \gets \text{diag}(\sigma^H_*)$
\State $\romBasisC_* \gets  \romBasisC\mat{\Sigma}_*$ \Comment{Rescale the columns of $\romBasisC$}
	\State $\romStateVec\gets \textsc{SolveROM}(\gResidual, \romBasisC_*, \trialBasisOffset, \romTol)$ \Comment{Solve the system $(\romBasisC)^T \gResidual \left(\trialBasisOffset + \romBasisC_* \romStateVec\right) = 0$ from Eq.\ (\ref{galerkin_rom}).}
	\State $\stateVec \gets \romBasisC_* \romStateVec$ \Comment{Lift the result to the full-order model.}
	\If{$\|\gResidual(\stateVec)\|_2 < \tol$} \Comment{Check if the full-order residual is within the specified tolerance.}
		\State \textbf{break} \Comment{If the specified tolerance is satisfied, stop refinement.}
	\EndIf
	\State\label{step:mod_computeErrorIndicators} $\errorIndF \gets \textsc{ComputeErrorIndicators}(\romBasisC_*, \frontier_1, \ldots, \frontier_{\initRomSize})$ \Comment{Compute the error indicators in Eq.\ (\ref{eq:error_ind}).}
	\State\label{step:mod_refineFrontiers} $(\frontier_1, \ldots, \frontier_{\initRomSize}) \gets \textsc{RefineFrontiers}(\tree, \errorIndF, \frontier_1, \ldots, \frontier_{\initRomSize})$ \Comment{Use error indicators to selectively refine frontiers.}
	\State\label{step:mod_sieve} $\romBasisF \gets \left[\begin{array}{ccc}
\matsieve{\initBasisCol_1}{\frontier_1} & \cdots & \matsieve{\initBasisCol_{\romSize_0}}{\frontier_{\initRomSize}}  \end{array}\right]$ \Comment{Retrieve the current refined model basis.}
\State $\romBasisF \gets \romBasisF_{:, \left(\bigsqcup_i \frontier_i \right)\setminus \mathcal{I}}$ \Comment{Remove columns of $\romBasisF$ which correspond to vertices in $\mathcal{I}$}
\State $\sigma^h \gets \left\{ 1/ \|\romBasisF_{:, \parentSpace}\|_2 \mid  \parentSpace \in \left(\bigsqcup_i \frontier_i \right)\setminus \mathcal{I}\right\}$ \Comment{Get inverse norms of all columns in $\romBasisF$}
\State $\mat{\Sigma}^h \gets \text{diag}(\sigma^h)$
\State $\romBasisF \gets  \romBasisF \mat{\Sigma}^h$ \Comment{Rescale the columns of $\romBasisF$}
\State $(\mat{Q}, \mat{R}, \pi) \gets \textsc{CPQR}(\romBasisF)$ \Comment{Take column-pivoted QR factorization of $\romBasisF$; $\pi$ denotes the resulting column permutation.}
\State $k \gets \max_i \{ i \mid \mat{R}_{ii} \geq \epsilon_\text{QR} \}$ \Comment{Find the selection cutoff $k$ for the columns of $\romBasisF$.}
\State $\mathcal{I}_{\text{new}} \gets \pi_{k+1:\text{end}}$  \Comment{The spaces corresponding to the columns which occur after the cutoff $k$ must now be marked as inactive.}
\State $\mathcal{I} \gets \mathcal{I} \cup \mathcal{I}_{\text{new}}$\Comment{Add the above spaces to the set of inactive spaces.}
\EndWhile
\State \Return $(\frontier_1, \ldots, \frontier_{\initRomSize}, \romStateVec, \mathcal{I})$. 
\EndProcedure
\end{algorithmic}
\end{algorithm}

\section{Child grouping} \label{sec:child_grouping}

To better curtail the ROM dimension increase of a refinement, we may not want to split a parent node completely into all of its children, but rather into groups of children, as was done in the previous reduced-order model $h$-refinement paper \cite{carlberg2015adaptive}. These considerations motivate the definition of a generalized frontier,

\begin{definition}[generalized frontier]
A \textbf{generalized frontier} $\genFrontier$ (i.e., a frontier with child grouping) of a refinement tree $\tree$ is an orthogonal decomposition of $\mathbb{R}^\stateVecSize$ such that there exists a frontier $\frontier$ of $\tree$ satisfying
\begin{equation} \label{eq:genfrontier}
\frontier \preceq \genFrontier \,,
\end{equation}
with the property that the ancestor map $\ancestor : \frontier \longrightarrow \genFrontier$ satisfies
\begin{equation} \label{eq:genfrontier2}
\ancestor(\parentSpace) \subset \parent_\tree(\parentSpace), \qquad \forall \parentSpace \in \frontier \,,
\end{equation}
where $\parent_\tree(\parentSpace)$ denotes the parent of $\parentSpace$ in $\tree$. 
\end{definition}
The concept of a generalized frontier provides a useful range of resolutions between the coarse frontier $\frontier$ and the refinement $\refine{\frontier}$. In practice, a generalized frontier $\genFrontier$ can be represented on a computer by storing the frontier $\frontier$ in Eq.\ (\ref{eq:genfrontier}) together with the induced ancestor map $\ancestor : \frontier \longrightarrow \genFrontier$. Note that this representation $(\frontier, \ancestor)$ of a generalized frontier may not be unique without additional constraints, but every generalized frontier can be represented as such. To guarantee uniqueness of the frontier $\frontier$, we use the convention that
\begin{equation} \label{eq:canonFrontier}
(\frontier \cap \genFrontier) \setminus \treeLeaves = \emptyset \,.
\end{equation}
We give this $\frontier$ above a definition:
\begin{definition}[Full refinement of a generalized frontier]
The \textbf{full refinement} of a generalized frontier $\genFrontier$ is any frontier $\frontier$ which satisfies Eqs.\ (\ref{eq:genfrontier}), (\ref{eq:genfrontier2}), and (\ref{eq:canonFrontier}). We denote the full refinement of a generalized frontier $\genFrontier$ as $\refine{(\genFrontier)}$.
\end{definition}

\begin{restatable}[Uniqueness of full refinements of generalized frontiers]{proposition}{propcharacterfrontiers} \label{prop:character_frontiers}
The full refinement of a generalized frontier exists and is unique.
\end{restatable}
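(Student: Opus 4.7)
The plan is to prove existence by explicit construction and uniqueness by case analysis on each element of a full refinement. The key preliminary observation is that for any element $\mathbb{U}$ in a candidate full refinement $\frontier$, the chain $\mathbb{U} \subset \ancestor(\mathbb{U}) \subset \parent_\tree(\mathbb{U})$ (combined with Proposition \ref{subset_proposition}, which says inclusion corresponds to tree-descent) forces $\ancestor(\mathbb{U})$ to lie on the tree path from $\mathbb{U}$ to $\parent_\tree(\mathbb{U})$; since this path has no intermediate vertices, we get $\ancestor(\mathbb{U}) \in \{\mathbb{U}, \parent_\tree(\mathbb{U})\}$. This dichotomy is the workhorse of the argument.

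For existence, I would construct $\frontier$ directly from $\genFrontier$ as follows: for each $\mathbb{W} \in \genFrontier$, put $\mathbb{W}$ into $\frontier$ if $\mathbb{W}\in\treeLeaves$, and otherwise put all children $\children_\tree(\mathbb{W})$ into $\frontier$. Since the children of $\mathbb{W}$ sum orthogonally to $\mathbb{W}$ (Property \ref{prop:childrenOrth}) and $\genFrontier$ is itself an orthogonal decomposition of $\rootSpace$, the resulting $\frontier$ is an orthogonal decomposition, and since it consists of tree vertices it is a frontier. By construction $\frontier \preceq \genFrontier$, with ancestor map sending each $\mathbb{U}\in\frontier$ either to itself (when $\mathbb{U}$ is a leaf of $\genFrontier$) or to $\parent_\tree(\mathbb{U})$ (when $\mathbb{U}$ came from expanding a non-leaf), so in both cases $\ancestor(\mathbb{U}) \subset \parent_\tree(\mathbb{U})$. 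Finally, canonicality \eqref{eq:canonFrontier} holds because any $\mathbb{U} \in \frontier \cap \genFrontier$ with $\mathbb{U}$ non-leaf would force $\mathbb{U}$ to be a child of some element of $\genFrontier$ while simultaneously belonging to $\genFrontier$, contradicting pairwise orthogonality of $\genFrontier$ (Corollary \ref{incomparable_prop}).

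For uniqueness, suppose $\frontier_1$ and $\frontier_2$ both satisfy \eqref{eq:genfrontier}--\eqref{eq:canonFrontier}. I will show $\frontier_1 \subset \frontier_2$; symmetry then gives equality. Fix $\mathbb{U} \in \frontier_1$ with ancestor map $\ancestor_1: \frontier_1\to\genFrontier$. By the dichotomy above, $\ancestor_1(\mathbb{U})$ equals either $\mathbb{U}$ or $\parent_\tree(\mathbb{U})$. If $\ancestor_1(\mathbb{U}) = \mathbb{U}$, then $\mathbb{U}\in\genFrontier$, and canonicality applied to $\frontier_1$ forces $\mathbb{U}\in\treeLeaves$; the single preimage in $\frontier_2$ under $\ancestor_2$ of this one-dimensional $\mathbb{U}$ is then $\mathbb{U}$ itself, so $\mathbb{U}\in\frontier_2$. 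If instead $\ancestor_1(\mathbb{U}) = \parent_\tree(\mathbb{U}) \in \genFrontier$, this parent is non-leaf; canonicality for $\frontier_2$ prevents $\parent_\tree(\mathbb{U})\in\frontier_2$, so by Proposition \ref{prop:ancestor} the preimages $\ancestor_2^{-1}(\parent_\tree(\mathbb{U}))$ form an orthogonal decomposition of $\parent_\tree(\mathbb{U})$ consisting of strict descendants. Applying the dichotomy to each such preimage $\mathbb{V}$ forces $\parent_\tree(\mathbb{V}) = \parent_\tree(\mathbb{U})$, i.e., $\mathbb{V}$ is a child of $\parent_\tree(\mathbb{U})$. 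The children of $\parent_\tree(\mathbb{U})$ already form the full orthogonal decomposition of their parent, so the preimage set must be exactly $\children_\tree(\parent_\tree(\mathbb{U}))$, which contains $\mathbb{U}$.

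The main obstacle is the last case in the uniqueness argument: ruling out the possibility that $\ancestor_2^{-1}(\parent_\tree(\mathbb{U}))$ contains deeper descendants. The dichotomy combined with the constraint $\ancestor(\mathbb{V}) \subset \parent_\tree(\mathbb{V})$ is exactly what kills this: a grandchild $\mathbb{V}$ would have $\parent_\tree(\mathbb{V}) \subsetneq \parent_\tree(\mathbb{U}) = \ancestor_2(\mathbb{V})$, contradicting the required containment. Making this bookkeeping fully rigorous is the only nontrivial part.
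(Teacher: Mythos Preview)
Your argument has a genuine gap: it rests on the claim that the chain $\mathbb{U}\subset\ancestor(\mathbb{U})\subset\parent_\tree(\mathbb{U})$ forces $\ancestor(\mathbb{U})\in\{\mathbb{U},\parent_\tree(\mathbb{U})\}$, and you justify this via Proposition~\ref{subset_proposition}. But that proposition applies only to vertices of the tree, whereas $\ancestor(\mathbb{U})$ is an element of the \emph{generalized} frontier $\genFrontier$, which is merely an orthogonal decomposition of $\rootSpace$ and need not lie inside $\treeNodes$. Indeed, the whole point of generalized frontiers (``child grouping'') is to allow spaces that are sums of \emph{some}---not all---siblings. Concretely, if $\parent_\tree(\mathbb{U})$ has children $\mathbb{U},\childSpace_1,\childSpace_2$ and $\genFrontier$ contains the grouped space $\mathbb{U}+\childSpace_1$, then $\ancestor(\mathbb{U})=\mathbb{U}+\childSpace_1$ sits strictly between $\mathbb{U}$ and $\parent_\tree(\mathbb{U})$ and equals neither. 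So the dichotomy is simply false, and every subsequent use of it (both in the uniqueness case analysis and in the ``main obstacle'' paragraph) collapses.

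The same misunderstanding undermines your existence construction: you write ``put all children $\children_\tree(\mathbb{W})$ into $\frontier$'' for $\mathbb{W}\in\genFrontier$, but $\children_\tree(\mathbb{W})$ is undefined when $\mathbb{W}\notin\treeNodes$. The paper's existence proof sidesteps this by starting from \emph{any} witnessing frontier $\frontier$ (guaranteed by the definition of generalized frontier) and then iteratively refining only at the tree vertices that happen to lie in $(\frontier\cap\genFrontier)\setminus\treeLeaves$; its uniqueness proof uses Proposition~\ref{prop:onbelowabove} to compare $\parentSpace\in\frontier_1\setminus\frontier_2$ against the frontier $\frontier_2$ directly, never needing elements of $\genFrontier$ to be tree vertices. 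To repair your approach you would need to replace the dichotomy with a statement that only constrains how $\ancestor(\mathbb{U})$ decomposes over the children of $\parent_\tree(\mathbb{U})$, not that it equals one of two fixed spaces.
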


This definition now allows for straightforward generalization of the refinement algorithms \ref{alg:err} and \ref{alg:refine_proc} to generalized frontiers by simply replacing the frontiers $\frontier_1, ..., \frontier_k$ with generalized frontiers $\genFrontier_1, ..., \genFrontier_k$. On the other hand, to make use of the additional freedom that generalized frontiers give us in performing refinement, we modify algorithm \ref{alg:refine_frontiers}, which actually performs the frontier refinement. This modification is a direct extension of the \emph{child grouping} mechanism from the original ROM $h$-refinement paper \cite{carlberg2015adaptive}. 

To begin, note that $\genFrontier$ can be thought of as a special grouping of the spaces in $\refine{(\genFrontier)}$. From Eq.\ (\ref{eq:genfrontier}) above, every space in $\genFrontier$ is a sum of a group of spaces in $\refine{(\genFrontier)}$, with the property, imparted from Eq.\ (\ref{eq:genfrontier2}), that groupped spaces must have the same parent in the tree $\tree$. The generalized frontier structure is therefore directly analogous to \emph{child grouping} from ROM $h$-refinement. 

Just like our refinement algorithm \ref{alg:refine_proc} does not necessarily refine a frontier $\frontier$ to its full refinement $\refine{\frontier}$, we also do not necessarily have to split the spaces $\parentSpace \in \genFrontier$ we select for refinement into all of their individual consituents $\ancestor^{-1}(\parentSpace)$ in $\refine{(\genFrontier)}$. Instead, we opt to use the dual-weighted error residual indicators $\errorIndF$ in Eq.\ (\ref{eq:error_ind}) to determine a more conservative refinement.

This is done as follows: suppose we select $\parentSpace \in \genFrontier$ for refinement, i.e., the coarse error indicator $\errorIndC_\parentSpace$ is larger than the average of the coarse error indicators for the spaces in $\genFrontier$. The coarse error indicator can then be decomposed in terms of the fine error indicators for constituent spaces of $\parentSpace$ (see Eq.\ (\ref{eq:splitCoarseErr})),
\begin{equation}
\errorIndC_{\parentSpace} = \sum_{\childSpace \in \ancestor^{-1}(\parentSpace)} \errorIndF_{\childSpace} \,.
\end{equation}
Since these error indicators give us a rough estimate of the reduction in error achieved by refining fully to a specific vector space in $\ancestor^{-1}(\parentSpace)$, one principled way of refining $\parentSpace$ into subgroups $G_i \subset \ancestor^{-1}(\parentSpace)$ of spaces in $\ancestor^{-1}(\parentSpace)$ would be to split $\ancestor^{-1}(\parentSpace)$ in such a way that the subgroups $G_i$ have roughly equal cumulative error indicators $\sum_{\childSpace \in G_i} \errorIndF_{\childSpace}$. More precisely, given some splitting factor $0 < \alpha < 1$, we would like to find a decomposition of $\ancestor^{-1}(\parentSpace)$ into a minimal number of disjoint subgroups $G_i$ with the property that each subgroup has a cummulative error that is at most a fraction $\alpha$ of the total cummulative error $\errorIndC_{\parentSpace}$ of $\ancestor^{-1}(\parentSpace)$, i.e.,
\begin{equation}
\sum_{\childSpace \in G_i} \errorIndF_{\childSpace} \leq \alpha \errorIndC_{\parentSpace} \,.
\end{equation}
This is the well-known \emph{bin-packing problem}, which is $\mathcal{NP}$-hard. We use the inexpensive and easily implemented greedy \emph{first-fit algorithm} to generate the sets $G_i$. Greedy first-fit achieves the optimal number of bins within a factor of $2$, although there are other common approximation algorithms one may use if so inclined. Note that smaller values of $\alpha$ correspond to more aggressive splitting. Once the sets $G_i$ have been calculated, we define the corresponding spaces $\mathbb{G}_i$ as
\begin{equation}
\mathbb{G}_i \equiv \sum_{\childSpace \in G_i} \childSpace \,.
\end{equation}
From the properties of the ancestor map $\ancestor$, it is easy to see that these spaces form an orthogonal decomposition $\decomp_\parentSpace = \{ \mathbb{G}_i \}_i$ of the parent space $\parentSpace$. Hence, if $\parentSpace_1, ..., \parentSpace_b \in \genFrontier$ are marked for refinement, and $\decomp_{\parentSpace_1}, ..., \decomp_{\parentSpace_b}$ are the corresponding decompositions of these spaces computed with the greedy binning strategy above, then our refinement step becomes
\begin{equation}
\genFrontier' \gets \pRefineD{\genFrontier}{\decomp_{\parentSpace_1}, \ldots, \decomp_{\parentSpace_b}} \,.
\end{equation}
Note $\genFrontier'$ is an orthogonal decomposition by remark (\ref{re:refinement_still_frontier}) and that the full refinement $\refine{(\genFrontier)}$ of $\genFrontier$ serves as the frontier $\frontier$ in Eqs.\ (\ref{eq:genfrontier}) and (\ref{eq:genfrontier2}) for the orthogonal decomposition $\genFrontier'$. This makes $\genFrontier'$ a generalized frontier. The pseudocode for computing the refinement of $\genFrontier$ is given in algorithm \ref{alg:refine_gen_frontiers}.

\begin{algorithm}
\caption{Computation of Refined Generalized Frontiers} \label{alg:refine_gen_frontiers}
\hspace*{\algorithmicindent} \textbf{Input}: The $\Rn$-refinement tree $\tree$, the fine error indicators $\errorIndF$, the current generalized frontiers $\genFrontier_1, \ldots, \genFrontier_{\romSize_0}$.  \\
\hspace*{\algorithmicindent} \textbf{Output}: A new set of generalized frontiers $\genFrontier_1', \ldots, \genFrontier_{\initRomSize}'$ refined according to the input error indicators.
\begin{algorithmic}[1]
\Procedure{RefineFrontiers}{$\tree, \errorIndF, \genFrontier_1, \ldots, \genFrontier_{\initRomSize}$}
\State $\frontierglobalC \gets \bigsqcup_i \genFrontier_i$
\State $\frontierglobalF \gets \bigsqcup_i \refine{\genFrontier}_i$
\State $\ancestor \gets \textsc{GetGlobalAncestorMap}\left(\frontierglobalF, \frontierglobalC\right)$ \Comment{Compute the map in Eq.\ (\ref{eq:global_ancestor_map}) sending every space to its ancestor.}
\State $\prolong \gets \textsc{ComputeProlongationOperator}\left(\frontierglobalC, \frontierglobalF\right)$ \Comment{Compute prolongation operator using Eq.\ (\ref{eq:prolong_def}).}
\State $\errorIndC = \errorIndF \prolong$ \Comment{Compute coarse error indicators using Eq.\ (\ref{eq:coarse_err_prolong}).}
\State $\eta \gets \frac{1}{|\frontierglobalC|} \sum_{\parentSpace \in \frontierglobalC} \errorIndC_\parentSpace$ \Comment{Compute the average of the coarse error indicators.}
\State $S \gets \{ \parentSpace \in \frontierglobalC \mid \errorIndC_\parentSpace \geq \eta \}$ \Comment{Select the spaces in $\frontierglobalC$ whose coarse error indicator is greater than average.}
	\For{$i \in \nat{\initRomSize}$} \Comment{For each frontier $\frontier_i$}
	\State $S_i \gets \frontier_i \cap S$ \Comment{Extract the elements of $S$
	that came from $\frontier_i$.}
	\State $\genFrontier_i' \gets \genFrontier_i$
	\For{$\parentSpace \in S_i$} \Comment{For each space $\parentSpace$ to be refined.}
	\State $\{ G_1, ..., G_b \} \gets \textsc{GreedyBinPacking}(\errorIndF_{\ancestor^{-1}(\parentSpace)}, \alpha \errorIndC_\parentSpace)$ \Comment{Perform first-fit algorithm with item sizes $\errorIndF_{\ancestor^{-1}(\parentSpace)}$ and bin size $\alpha \errorIndC_\parentSpace$. $G_1, ..., G_b$ are the resulting bin contents.}
	\For{$j \in \nat{b}$}
		\State $\mathbb{G}_j \gets \sum_{\childSpace \in G_j} \childSpace$ \Comment{Compute subspaces corresponding to bin contents.}
	\EndFor
	\State $\decomp_\parentSpace \gets \{ \mathbb{G}_1, ..., \mathbb{G}_b \}$ \Comment{Construct the orthogonal decomposition of $\parentSpace$.}
	\State $\genFrontier_i' \gets \pRefineD{\genFrontier_i'}{\decomp_\parentSpace}$ \Comment{Split the space $\parentSpace$ on $\genFrontier$ by using the decomposition $\decomp_\parentSpace$.}
	\EndFor
\EndFor
\State \Return $(\genFrontier_1', \ldots, \genFrontier_{\initRomSize}')$ \Comment{Return the refined frontiers.}
\EndProcedure
\end{algorithmic}
\end{algorithm}

\section{Proofs of propositions} \label{sec:proofs}

\propsubset*

\begin{proof}
If $\childSpace$ is a descendant of $\parentSpace$, then recursive application
	of the property (2) immediately gives that $\childSpace \subset
	\parentSpace$. Conversely, consider the case where $\childSpace \subset
	\parentSpace$. Let $\vecSpace{A} \in \treeNodes$ be the first common
	ancestor of $\childSpace$ and $\parentSpace$. If $\vecSpace{A} =
	\parentSpace$, then the desired result holds automatically. If $\vecSpace{A}
	= \childSpace$, then the forward direction implies that $\parentSpace
	\subset \childSpace$ and thus $\parentSpace = \childSpace$ and the desired
	result holds. Otherwise, suppose $\vecSpace{A} \neq \childSpace$ and
	$\vecSpace{A} \neq \parentSpace$. Because $\vecSpace{A}$ is the first common ancestor of $\childSpace$ and $\parentSpace$, $\childSpace$ and $\parentSpace$ must be descendant from different children $\childSpace'$ and $\parentSpace'$ of $\vecSpace{A}$ respectively. Property (3) implies $\childSpace' \perp \parentSpace'$ and the forward direction of the proof implies that $\childSpace \subset \childSpace'$ and $\parentSpace \subset \parentSpace'$. Hence, $\childSpace \perp \parentSpace$. Furthermore, since every vertex in the tree has a leaf as an ancestor, the forward direction of the proof combined with property (4) implies that all spaces in $\treeNodes$ have dimension at least $1$. This fact, combined with $\childSpace \perp \parentSpace$ and the assumption $\childSpace \subset \parentSpace$ gives a contradiction, proving the backward direction.
\end{proof}

\propdesctree*

\begin{proof}
For the forward direction, if the assumption holds, then the first common
	ancestor of $\parentSpace$ and $\childSpace$ is neither $\parentSpace$ nor
	$\childSpace$. The desired result was now proved as part of the proof of
	proposition (\ref{subset_proposition}). For the backward direction, since
	all spaces in $\treeNodes$ have at least dimension $1$, so $\childSpace
	\perp \parentSpace$ implies that neither $\childSpace$ nor $\parentSpace$ is
	a subset of the other. The desired result follows from proposition
	(\ref{subset_proposition}).
\end{proof}

\propcharfrontiers*

\begin{proof}
Suppose $\frontier \subset \treeNodes$ is a frontier and suppose for
	contradiction that there is a leaf $\leafSpace$ that is not descendant from any space in $\frontier$. Since $\leafSpace$ is a leaf, this means that $\leafSpace$ is incomparable with every element in $\frontier$, which by (\ref{incomparable_prop}) means that $\leafSpace$ is orthogonal to every space in $\frontier$. Hence, $\frontier$ cannot be a decomposition of $\rootSpace$.

Conversely, suppose $\frontier \subset \treeNodes$ is a set of vertices such that every leaf in $\leafSpace$ is descendant from exactly one element in $\frontier$. For each $\parentSpace_i \in \frontier$ we denote $\leafSpace_{ij}$ as the leaf spaces descendant from $\parentSpace_i$. Recursive application of properties (1) and (2) of a refinement tree tells us that
\begin{equation}
\parentSpace_i = \sum_j \leafSpace_{ij} \,.
\end{equation}
Therefore, since $\parentSpace_i$ is incomparable with all $\leafSpace_{rj}$ such that $r \neq i$, it follows that $\parentSpace_i \perp \parentSpace_r$ for $i \neq r$. Furthermore, since the leaf spaces $\leafSpace_{ij}$ sum to $\rootSpace$, the spaces $\parentSpace_i$ must also sum to $\rootSpace$. Hence, $\frontier$ is an orthogonal decomposition of $\rootSpace$.
\end{proof}

\propancestor*

\begin{proof}
The existence of such a map is trivial, it follows from the definition of the partial order $\preceq$. The uniqueness of this map follows from the fact that the elements of $\frontier_2$ must be orthogonal, and hence $\childSpace \in \frontier_1$ can be a subspace of at most one of the spaces in $\frontier_2$. To prove Eq.\ (\ref{frontier_decomposition}), note that, by the above, we must always have $\sum_{\childSpace \in \ancestor^{-1}(\parentSpace)} \childSpace \subset \parentSpace$. So, suppose for contradiction that $\sum_{\childSpace \in \ancestor^{-1}(\parentSpace)} \childSpace \subsetneq \parentSpace$. Consider the space
\begin{equation}
\childSpace' = \left(\sum_{\childSpace \in \ancestor^{-1}(\parentSpace)} \childSpace \right)^\perp \cap \parentSpace \,.
\end{equation}
This space cannot be trivial -- by assumption, there exists a nonzero $v \in
	\parentSpace \setminus \sum_{\childSpace \in \ancestor^{-1}(\parentSpace)}
	\childSpace$ and projecting out the space $\sum_{\childSpace \in
	\ancestor^{-1}(\parentSpace)} \childSpace$ yields a nonzero vector both in
	$\parentSpace$ and in $\left(\sum_{\childSpace \in
	\ancestor^{-1}(\parentSpace)} \childSpace \right)^\perp$. Furthermore,
	$\childSpace' \perp \childSpace$ for $\childSpace \in
	\ancestor^{-1}(\parentSpace)$ by construction, and $\childSpace' \perp
	\childSpace$ for all $\childSpace \in \decomp_1$ such that
	$\ancestor(\childSpace) \neq \parentSpace$ since $\childSpace \subset
	\ancestor(\childSpace) $, $\ancestor(\childSpace) \perp \parentSpace$, and
	$\childSpace' \subset \parentSpace$. Therefore, there exists a nonzero
	vector in $\childSpace'$ that is not contained in $\sum_{\childSpace \in \decomp_1} \childSpace$ and hence $\decomp_1$ cannot be an orthogonal decomposition of $\rootSpace$, a contradiction.
\end{proof}

\propmeet*

\begin{proof}
To begin, we prove that the constituents of $\bigwedge_i \frontier_i$ are orthogonal to one another. Let $\childSpace \neq \childSpace'$ be elements of $\bigwedge_i \frontier_i$. Then, we have
\begin{equation}
\begin{split}
\childSpace = \parentSpace_1 \cap \parentSpace_2 \cap \ldots \cap \parentSpace_m \qquad \parentSpace_i \in \frontier_i \\
\childSpace' = \parentSpace_1' \cap \parentSpace_2' \cap \ldots \cap \parentSpace_m' \qquad \parentSpace_i' \in \frontier_i \\
\end{split}
\end{equation}
Because $\childSpace \neq \childSpace'$ we must have $\parentSpace_j \neq \parentSpace_j'$ for some $j$. Then $\parentSpace_j \perp \parentSpace_j'$ by the properties of $\frontier_j$. Thus, since $\childSpace \subset \parentSpace_j$ and $\childSpace' \subset \parentSpace_j'$, we have that $\childSpace \perp \childSpace'$.

Now, to prove that $\bigwedge_i \frontier_i$ spans $\rootSpace$: because
	$\treeLeaves$ spans $\rootSpace$, it suffices to show that, for each $\leafSpace \in \treeLeaves$, there exists $\childSpace \in \bigwedge_i \frontier_i$ such that $\leafSpace \subset \childSpace$, where $\treeLeaves$ are the leaves of
	$\tree$. Because $\treeLeaves \preceq \frontier_i$ for all $i$, note that each $\leafSpace \in \treeLeaves$ satisfies $\leafSpace \subset \childSpace_i$ for some $\childSpace_i \in \frontier_i$. Therefore,
$\leafSpace \subset \childSpace_1 \cap \childSpace_2 \cap \ldots \cap \childSpace_m \in \bigwedge_i \frontier_i$ and, since this holds for all $\leafSpace \in \treeLeaves$, we have that $\bigwedge_i \frontier_i$ spans $\rootSpace$.

Finally, we need to prove that 
\begin{equation}
\parentSpace_1 \cap \parentSpace_2 \cap \ldots \cap \parentSpace_m \in \treeNodes 
\end{equation}
for all $\parentSpace_i \in \frontier_i$ such that the above intersection is nontrivial. It suffices to prove that for any two $\parentSpace_1, \parentSpace_2 \in \treeNodes$ that either $\parentSpace_1 \cap \parentSpace_2 \in \treeNodes$ or $\parentSpace_1 \cap \parentSpace_2 = 0$. Corollary (\ref{incomparable_prop}) tells us that either one of $\parentSpace_1, \parentSpace_2$ is a subspace of the other, or $\parentSpace_1 \perp \parentSpace_2$. In the first case we have either $\parentSpace_1 \cap \parentSpace_2 = \parentSpace_1$ or $\parentSpace_1 \cap \parentSpace_2 = \parentSpace_2$. Either way, $\parentSpace_1 \cap \parentSpace_2 \in \treeNodes$. In the second case we have $\parentSpace_1 \cap \parentSpace_2 = 0$. This proves the desired result.
\end{proof}

\propmeetsub*

\begin{proof}
We proved in proposition (\ref{prop:meet}) that for any $\parentSpace_1, \parentSpace_2 \in \treeNodes$, $\parentSpace_1 \cap \parentSpace_2$ is equal to either $\parentSpace_1$, $\parentSpace_2$, or $0$. Then for, $\parentSpace_1 \cap \parentSpace_2 \cap \ldots \cap \parentSpace_m \in \bigwedge_i \frontier_i$, this means that $\parentSpace_1 \cap \parentSpace_2 \cap \ldots \cap \parentSpace_m = \parentSpace_j$ for some $j$ as the intersection is nontrivial.
\end{proof}

\proplargestlowerbound*

\begin{proof}
The first statement that $\bigwedge_i \frontier_i \preceq \frontier_j$ follows
	trivially from the fact that $\parentSpace_1 \cap \parentSpace_2 \cap \ldots
	\cap \parentSpace_m \subset \parentSpace_j$. For the second statement, let
	$\childSpace \in \frontierH$. Then, by assumption, $\childSpace \subset
	\parentSpace_i$ for some $\parentSpace_i \in \frontier_i$ for all $i \in \nat{m}$. Ergo, $\childSpace \subset \parentSpace_1 \cap \parentSpace_2 \cap \ldots \parentSpace_m \in \bigwedge_i \frontier_i$. Therefore, $\frontierH \preceq \bigwedge_i \frontier_i$.
\end{proof}

\propnodescendants*

\begin{proof}
For contradiction, suppose there exists $\parentSpace \in \bigwedge_i
	\frontier_i$ that has a descendant $\childSpace \in \frontier_j$ such that $\childSpace \subsetneq \parentSpace$. Then, we note that $\bigwedge_i \frontier_i \not\preceq \frontier_j$ since $\parentSpace$ cannot be a subspace of any element in $\frontier_j$ as otherwise $\childSpace$ would be a subspace of another space in $\frontier_j$, violating orthogonality. The result $\bigwedge_i \frontier_i \not\preceq \frontier_j$ then contradicts proposition (\ref{prop:lower_bound}).

Conversely, suppose $\parentSpace \in \frontier_j$ has no descendants in
	$\left(\bigcup_i \frontier_i\right) \setminus \parentSpace$. By proposition (\ref{prop:lower_bound}) above, $\bigwedge_i \frontier_i \preceq \frontier_j$, which means that there must exist $\childSpace \in \bigwedge_i \frontier_i \subset \bigcup_i \frontier_i $ such that $\childSpace \subset \parentSpace$. By assumption, we must therefore have $\childSpace = \parentSpace$, or else $\parentSpace$ would have a descendant in $\left(\bigcup_i \frontier_i\right) \setminus \parentSpace$. Thus, $\parentSpace \in \bigwedge_i \frontier_i$.
\end{proof}

\proponbelowabove*

\begin{proof}
Let $\leafSpace$ be a leaf descendant from $\parentSpace$. By the
	characterization of frontiers given in proposition
	(\ref{prop:character_frontiers}), every leaf $\leafSpace \in \treeNodes$ is
	descendant from exactly one element $\childSpace \in \frontier$. Both
	$\parentSpace$ and $\childSpace$ must then be on the same path from the root
	$\rootSpace$ to the leaf $\leafSpace$. This means one is descendant from the
	other, which proves that either $\parentSpace \subset \childSpace$ or
	$\childSpace \subset \parentSpace$ by proposition
	(\ref{subset_proposition}). To prove that (2) and (3) are exclusive, suppose
	there existed $\childSpace_1, \childSpace_2 \in \frontier$ such that
	$\parentSpace \subsetneq \childSpace_1$ and $\childSpace_2 \subsetneq
	\parentSpace$. This would imply that $\childSpace_2 \subsetneq
	\childSpace_1$, which is impossible because then $\childSpace_1 \neq \childSpace_2$, and hence $\childSpace_1, \childSpace_2$ are nontrivial subspaces such that $\childSpace_2 \perp \childSpace_1$ by the definition of a frontier, which contradicts $\childSpace_2 \subsetneq \childSpace_1$. Exclusion between (1) and (2) and between (1) and (3) can be proved similarly.
\end{proof}

\propcharacterfrontiers*

\begin{proof}
For existence, let $\genFrontier$ be a generalized frontier with a frontier $\frontier$ such that $\frontier \preceq \genFrontier$ and $\ancestor(\parentSpace) \subset \parent_\tree(\parentSpace)$ for all $\parentSpace \in \frontier$, where $\ancestor$ is the ancestor map. Suppose $(\frontier \cap \genFrontier) \setminus \treeLeaves \neq \emptyset$ and let $\parentSpace \in (\frontier \cap \genFrontier) \setminus \treeLeaves$. Consider $\frontier' \equiv \pRefine{\frontier}{\parentSpace}{T}$. Note $\frontier' \preceq \genFrontier$ since $\frontier \preceq \genFrontier$. And since $\parentSpace \not\in \treeLeaves$, $\parentSpace \not\in \frontier'$. Furthermore, all elements of $\frontier' \setminus \frontier$ are children of $\parentSpace$ which is in $\genFrontier$, and hence, these children cannot be in $\genFrontier$, as $\genFrontier$ is an orthogonal decomposition. Thus, $|(\frontier' \cap \genFrontier) \setminus \treeLeaves| < |(\frontier \cap \genFrontier) \setminus \treeLeaves|$. Induction now shows that the full-refinement exists. 

For uniqueness, suppose there are two distinct $\frontier_1$ and $\frontier_2$ satisfying the full refinement property, with corresponding ancestor maps $\ancestor_1 : \frontier_1 \longrightarrow \genFrontier$ and $\ancestor_2 : \frontier_2 \longrightarrow \genFrontier$. Without loss of generality, assume $\frontier_1 \setminus \frontier_2$ is nonempty. Let $\parentSpace \in \frontier_1 \setminus \frontier_2$. By proposition (\ref{prop:onbelowabove}), $\parentSpace$ must be either above, on, or below $\frontier_2$. It cannot be on $\frontier_2$, so that leaves two choices:
\begin{enumerate}
\item $\parentSpace$ is above $\frontier_2$: In this case, there exists $\childSpace \in \frontier_2$ descendant from $\parentSpace$, such that $\parent_\tree(\childSpace) \subset \parentSpace$. But since $\childSpace \subset \parentSpace$, we must have $\ancestor_2(\childSpace) = \ancestor_1(\parentSpace)$, as otherwise, since $\genFrontier$ is an orthogonal decomposition, we would have $\ancestor_2(\childSpace) \cap \ancestor_1(\parentSpace) = \emptyset$, but this gives a contradiction with $\childSpace \subset \parentSpace \subset \ancestor_1(\parentSpace)$ and $\childSpace \subset \ancestor_2(\childSpace)$. But then, we would ahve that $\ancestor_1(\parentSpace) = \ancestor_2(\childSpace) \subset \parent_\tree(\childSpace) \subset \parentSpace$, which tells us that $\parentSpace = \ancestor_1(\parentSpace)$. But this means that $\parentSpace \in \genFrontier$ and $\parentSpace \in \frontier_1$ and $\parentSpace$ is not a leaf, which contradicts our assumption.
\item $\parentSpace$ is below $\frontier_2$: In this case, $\parentSpace$ is descendant from some $\childSpace \in \frontier_2$. The argument above can now be run in reverse.
\end{enumerate}
Thus, the full refinement must be unique.
\end{proof}

\bibliography{wileyNJD-AMA}
\bibliographystyle{siam}

\end{document}